\def\C{\ensuremath{\mathbb{C}}}
\def\H{\ensuremath{\mathbb{H}}}
\def\P{\ensuremath{\mathbb{P}}}
\def\Q{\ensuremath{\mathbb{Q}}}
\def\R{\ensuremath{\mathbb{R}}}
\def\Z{\ensuremath{\mathbb{Z}}}
\def\cA{\ensuremath{\mathcal A}}
\def\cC{\ensuremath{\mathcal C}}
\def\cE{\ensuremath{\mathcal E}}
\def\cF{\ensuremath{\mathcal F}}
\def\cH{\ensuremath{\mathcal H}}
\def\cK{\ensuremath{\mathcal K}}
\def\cL{\ensuremath{\mathcal L}}
\def\cN{\ensuremath{\mathcal N}}
\def\cO{\ensuremath{\mathcal O}}
\def\cP{\ensuremath{\mathcal P}}
\def\cQ{\ensuremath{\mathcal Q}}
\def\cT{\ensuremath{\mathcal T}}
\def\cW{\ensuremath{\mathcal W}}
\def\cX{\ensuremath{\mathcal X}}
\def\ss{\ensuremath{\mathbf s}}
\def\tt{\ensuremath{\mathbf t}}
\def\uu{\ensuremath{\mathbf u}}
\def\vv{\ensuremath{\mathbf v}}
\def\ww{\ensuremath{\mathbf w}}
\def\ff{\ensuremath{\mathbf f}}
\def\ee{\ensuremath{\mathbf e}}
\def\xx{\ensuremath{\mathbf x}}
\def\rr{\ensuremath{\mathbf r}}
\def\fM{\mathfrak M}
\def\phi{{\varphi}}
\DeclareMathOperator{\Aut}{Aut}
\DeclareMathOperator{\ch}{ch}
\DeclareMathOperator{\Ext}{Ext}
\DeclareMathOperator{\Hilb}{Hilb}
\DeclareMathOperator{\Hom}{Hom}
\DeclareMathOperator{\NS}{NS}
\DeclareMathOperator{\Pic}{Pic}
\DeclareMathOperator{\Stab}{Stab}
\DeclareMathOperator{\td}{td}
\newcommand{\RcHom}{\mathrm{R}\mathcal{H}\!{\it om}}
\def\Aut{\mathop{\mathrm{Aut}}\nolimits}
\def\ch{\mathop{\mathrm{ch}}\nolimits}
\def\Coh{\mathop{\mathrm{Coh}}\nolimits}
\def\ext{\mathop{\mathrm{ext}}\nolimits}
\def\Ext{\mathop{\mathrm{Ext}}\nolimits}
\def\Hilb{\mathop{\mathrm{Hilb}}\nolimits}
\def\hom{\mathop{\mathrm{hom}}\nolimits}
\def\Hom{\mathop{\mathrm{Hom}}\nolimits}
\def\RHom{\mathop{\mathrm{RHom}}\nolimits}
\def\id{\mathop{\mathrm{id}}\nolimits}
\def\NS{\mathop{\mathrm{NS}}\nolimits}
\def\Pic{\mathop{\mathrm{Pic}}\nolimits}
\def\td{\mathop{\mathrm{td}}\nolimits}
\def\Stab{\mathop{\mathrm{Stab}}\nolimits}
\def\ST{\mathrm{ST}}
\def\norm#1{\left\|#1\right\|}
\def\Halg{{H^{*}_{\mathrm{alg}}(X,\mathbb{Z})}}
\def\onto{\ensuremath{\twoheadrightarrow}}
\newtheorem{Thm}{Theorem}[section]
\newtheorem{Prop}[Thm]{Proposition}
\newtheorem{Lem}[Thm]{Lemma}
\newtheorem{Cor}[Thm]{Corollary}
\newtheorem*{Ques*}{Question}
\theoremstyle{definition}
\newtheorem{Defi}[Thm]{Definition}
\newtheorem{Rem}[Thm]{Remark}
\newtheorem{Ex}[Thm]{Example}
\def\@tocline#1#2#3#4#5#6#7{\relax
  \ifnum #1>\c@tocdepth % then omit
  \else
    \par \addpenalty\@secpenalty\addvspace{#2}%
    \begingroup \hyphenpenalty\@M
    \@ifempty{#4}{%
      \@tempdima\csname r@tocindent\number#1\endcsname\relax
    }{%
      \@tempdima#4\relax
    }%
    \parindent\z@ \leftskip#3\relax \advance\leftskip\@tempdima\relax
    \rightskip\@pnumwidth plus4em \parfillskip-\@pnumwidth
    #5\leavevmode\hskip-\@tempdima
      \ifcase #1
       \or\or \hskip 1em \or \hskip 2em \else \hskip 3em \fi%
      #6\nobreak\relax
    \dotfill\hbox to\@pnumwidth{\@tocpagenum{#7}}\par
    \nobreak
    \endgroup
  \fi}
\setlist[itemize]{noitemsep,nolistsep}
\setlist[enumerate]{noitemsep,nolistsep}
\begin{document}

\definecolor{xdxdff}{rgb}{0.49019607843137253,0.49019607843137253,1}
\definecolor{uuuuuu}{rgb}{0.26666666666666666,0.26666666666666666,0.26666666666666666}
\definecolor{ffqqqq}{rgb}{1,0,0}

\title{Stable Sheaves on K3 Surfaces via Wall-Crossing}

\author[Alessio Bottini]{Alessio Bottini}

\address{Dipartimento di Matematica, Universit\`{a} di Roma Tor Vergata, Via della Ricerca Scientifica 1, 00133, Roma, Italia}
\address{Universit\'e Paris-Saclay,
CNRS, Laboratoire de Math\'ematiques d'Orsay,
Rue Michel Magat, B\^at. 307, 91405 Orsay, France}
\email{bottini@mat.uniroma2.it}

%MathSubjClass2020
\makeatletter
\@namedef{subjclassname@2020}{%
  \textup{2020} Mathematics Subject Classification}
\makeatother
\keywords{Bridgeland stability,  Moduli spaces, K3 surfaces, Projective hyperk\"{a}ler manifolds,}
\subjclass[2020]{14D20, 14F05, 14J28, 14J42, 14J60, 18E30}

\begin{abstract}
We give a new proof of the following theorem: moduli spaces of stable complexes on a complex projective K3 surface, with primitive Mukai vector and with respect to a generic Bridgeland stability condition, are hyperk\"{a}hler varieties of $\mathrm{K3}^{[n]}$-type of expected dimension. We use derived equivalences, deformations and wall-crossing for Bridgeland stability to reduce to the case of the Hilbert scheme of points.
\end{abstract}

\maketitle
\setcounter{tocdepth}{1}
\tableofcontents

%%%%%%%%%%%%%%%%%%%%%%%%%%%%%%%%%%%%%

\section{Introduction}\label{sec:intro}
Moduli spaces of semistable sheaves on a complex projective K3 surface $X$ are a well studied class of algebraic varieties, and they are among the only known examples of compact hyperk\"{a}hler (or irreducible holomorphic symplectic) varieties. Classically, we consider the moduli space $M_H(\vv)$ of Gieseker-stable coherent sheaves with fixed topological invariants, encoded in the Mukai vector $\vv \in \Halg$. Recall that, given a polarization $H$, a coherent sheaf $E$ is Gieseker semistable if it is pure and 
\[ 
   p(F,m) \leq p(E,m)
\]
for every proper subsheaf $F \subset E$, where $p(E,m)$ is the reduced Hilbert polynomial. It is stable if the strict inequality holds. 
The stability condition gives a GIT construction of $M_H(\vv)$, which is then projective. If $\vv$ is primitive, and $H$ is generic, i.e. it is in the complement of a union of hyperplanes in the ample cone, then $M_H(\vv)$ is smooth and parameterizes stable sheaves. 

In \cite{Bridgeland_triangulated} and \cite{bridgeland_stability_2006} Bridgeland defined the notion of a stability condition on a triangulated category, and constructed stability conditions on the bounded derived category of coherent sheaves $D^b(X)$ on a K3 surface $X$. These stability conditions form a complex manifold $\Stab(X)$, and given a Mukai vector $\vv \in \Halg$ there is a set of real-codimension one submanifolds, such that stability of objects with class $\vv$ is constant in each chamber, i.e. in each connected component of the complement of the walls. If $\vv$ is primitive, we say that a stability condition $\sigma \in \Stab(X)$ is $\vv$-generic if it varies in a chamber for $\vv$. In this case, every $\sigma$-semistable object is $\sigma$-stable. Moreover, there is a chamber, near the ``large volume limit", where Bridgeland stability recovers Gieseker stability. The connected component of $\Stab(X)$ containing this chamber is called the \textit{distinguished component} and denoted by $\Stab^{\dagger}(X)$. 

Moduli stacks of semistable complexes were studied by many people: Toda \cite{Toda_Moduli}, Abramovich-Polishchuk \cite{AP:tstructures}, and finally a complete treatment can be found \cite[Part II]{bayer2019stability}. If $\vv$ is primitive, and $\sigma \in \Stab^{\dagger}(X)$ is $\vv$-generic there exists a coarse moduli space as an algebraic space, and it parameterizes stable complexes.
Moreover, under these assumptions the coarse moduli space is a smooth and proper algebraic space, by results of Inaba \cite{inaba_stable,inaba_smoothness_2010} and Lieblich \cite{Lieblich_complexes}. In contrast to the classical Gieseker moduli spaces, these have no obvious GIT construction. Projectivity was shown in \cite{bayer_projectivity_2013}. The idea is to use a Fourier-Mukai argument to reduce to the classical case of (twisted) Gieseker stability for which a GIT construction is available.

The goal of this paper is to give a new proof of the following result.

\begin{Thm}[Main Theorem]
Let $X$ be a K3 surface. Let $\Halg$ be its extended Mukai lattice, together with the Mukai Hodge structure.
Assume that $\vv\in \Halg$ is a primitive vector and let $\sigma\in\Stab^\dagger(X)$ be a $\vv$-generic stability condition on $X$.
Then:
\begin{enumerate}[{\rm (1)}]
\item The moduli space $M_{\sigma}(\vv)$ is non-empty if and only if $\vv^2\geq-2$.
Moreover, it is a smooth projective hyperk\"{a}hler variety of dimension $\vv^2 + 2$, deformation-equivalent to a Hilbert scheme of points on a K3 surface.
\item If $\vv^2\geq 0$, then there exists a natural Hodge isometry 
\[
\theta_{\vv}^{-1}\colon H^2(M_\sigma(\vv),\Z)\xrightarrow{\quad\sim\quad}
\begin{cases}\vv^\perp & \text{if }\vv^2>0\\ \vv^\perp/\Z\vv & \text{if } \vv^2=0,\end{cases}
\]
where the orthogonal is taken in $H^{*}(X,\mathbb{Z})$.
\end{enumerate}
\end{Thm}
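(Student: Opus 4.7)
The plan is to reduce the general case to the Hilbert scheme of points on a K3 surface through three types of reduction moves—Fourier--Mukai equivalences, deformations of the underlying K3, and wall-crossing inside $\Stab^\dagger(X)$—each of which preserves the property of being a smooth projective hyperk\"ahler variety of $\KKK^{[n]}$-type together with the natural Hodge isometry on $H^2$. The base case is the Hilbert scheme $\Hilb^n(X)$, which is classically known to be hyperk\"ahler of $\KKK^{[n]}$-type of dimension $2n$, with $H^2(\Hilb^n(X),\Z)\simeq H^2(X,\Z)\oplus \Z\delta$ realizing the desired orthogonal complement inside $\Halg$.

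First I would set up the three reduction operations. A Fourier--Mukai equivalence $\Phi\colon \Db(X)\isomor \Db(Y)$ sends $\sigma$-stable objects to $\Phi_*\sigma$-stable ones, hence induces an isomorphism $M_\sigma(\vv)\simeq M_{\Phi_*\sigma}(\Phi_*\vv)$; since $\Phi_*$ acts as a Hodge isometry on Mukai lattices, the $\theta$-isomorphism is transported along. A deformation of $(X,\vv,\sigma)$ over a connected base $T$—that is, a smooth proper family $\cX\to T$ with $\vv$ a section of the local system of algebraic Mukai classes together with a relative $\vv$-generic stability condition—produces, via the relative moduli theory available in \cite{bayer2019stability}, a smooth proper family $\cM\to T$ whose fibers are the corresponding moduli spaces. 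Connectedness of $T$ forces all fibers to have the same deformation type, and the local system $R^2(\cM/T)_*\Z$ propagates the Hodge isometry on $H^2$. Finally, if $\sigma$ and $\sigma'$ lie in $\vv$-chambers separated by a single wall, a standard destabilization analysis yields a birational map $M_\sigma(\vv)\dra M_{\sigma'}(\vv)$, and since birational hyperk\"ahler manifolds are deformation equivalent by Huybrechts, being of $\KKK^{[n]}$-type is preserved across the wall; the identification of $H^2$ is unchanged because the two moduli spaces share the same Mukai cohomology via the standard comparison morphism.

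With these tools in place, the reduction proceeds as follows. Using the action of derived autoequivalences on $\Halg$, move $\vv$ to a Mukai vector of positive rank. Next, wall-cross from $\sigma$ inside $\Stab^\dagger(X)$ to the large-volume chamber, so that $M_\sigma(\vv)$ is identified with a Gieseker moduli space $M_H(\vv)$ for some $\vv$-generic polarization $H$. Then deform the triple $(X,H,\vv)$ through a polarized family to a K3 surface $X_0$ of Picard rank one containing $\vv$ as an algebraic class; on such $X_0$ a further Fourier--Mukai transform or spherical twist identifies $M_H(\vv)$ with $\Hilb^n(X_0)$ for $n=\vv^2/2+1$. Non-emptiness for $\vv^2\geq -2$ is obtained by propagating non-emptiness of $\Hilb^n(X_0)$ backwards through the three reductions, and the projectivity statement comes for free once one has reached the Hilbert scheme, since the three moves preserve projectivity on the locus where $M_\sigma(\vv)$ is smooth and proper.

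The main obstacle is controlling $\vv$-genericity throughout the deformation step. The walls of $\Stab^\dagger(X)$ are governed by the algebraic Mukai classes of $X$, and these themselves vary as $X$ moves in a family, so walls can appear, disappear, or collide along a path in $T$. One must argue, via a general position argument, that a sufficiently generic path from $X$ to a Picard rank one K3 avoids the bad loci while still carrying a relative $\vv$-generic stability condition. This requires a precise analysis of how walls propagate in families together with an openness/density statement for the locus of $\vv$-generic stability conditions in the relative $\Stab$-space, and is the technical core of the argument.
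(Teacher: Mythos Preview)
Your outline captures the high-level architecture---equivalences, deformations, wall-crossing---but it contains a genuine circularity at the wall-crossing step for $\vv^2>0$, and this is precisely the difficulty the paper is built to sidestep.

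You write that across a single wall ``a standard destabilization analysis yields a birational map $M_\sigma(\vv)\dra M_{\sigma'}(\vv)$,'' and then invoke Huybrechts to conclude the two sides are deformation equivalent. This step does not go through as stated. First, there exist \emph{totally semistable} walls, where no $\sigma_+$-stable object remains $\sigma_0$-stable, so the common open locus may be empty and there is no birational map at all; one must \emph{produce} $\sigma_-$-stable objects by hand. Second, even when the common locus is non-empty, showing that the induced map is defined in codimension one---so that Huybrechts applies---requires knowing irreducibility and non-emptiness of the moduli spaces, which is exactly part of what you are proving. The paper says this explicitly: the analysis of walls for $\vv^2>0$ ``relies on existence of stable complexes.'' So ``propagating non-emptiness backwards'' through wall-crossing is circular unless you have an independent, constructive argument at each wall.

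The paper resolves this in two different ways depending on $\vv^2$. For $\vv^2\in\{-2,0\}$ it proves a hands-on wall-crossing theorem (Theorem~\ref{semirigid1}): across any wall one can exhibit explicit spherical twists carrying stable objects to stable objects, so the moduli spaces on the two sides are literally isomorphic. For $\vv^2>0$ it \emph{avoids general wall-crossing altogether}: after moving to an elliptic K3 with hyperbolic Picard lattice and taking a Fourier--Mukai partner $M=M_\sigma(\ww)$ (allowed because the $\vv^2=0$ case is already done), the resulting stability condition lands in the geometric chamber $U(M)$; one then deforms to a Picard-rank-one K3 of degree $2k^2(n-1)$, where one shows by a direct numerical argument that $V(X)$ contains \emph{only} the Hilbert--Chow wall, crossed by the derived dual. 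No appeal to a general birational wall-crossing map is needed.

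Two smaller points. Your plan to ``deform to a K3 of Picard rank one containing $\vv$ as an algebraic class'' and then apply a Fourier--Mukai transform to reach $(1,0,1-n)$ is underspecified: on a Picard-rank-one K3 there are not enough autoequivalences to do this for an arbitrary $\vv$, which is why the paper first goes to an elliptic K3 with $\Pic$ a hyperbolic plane. And the issue you flag as ``the main obstacle''---maintaining $\vv$-genericity along a deformation---is in fact handled directly by the relative-stability machinery (Theorem~\ref{families}); it is not the crux.
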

Here $\theta_{\vv}$ is the \textit{Mukai homomorphism}, see Definition \ref{Def:MukaiHom} for the precise definition. 

The analogous result for Gieseker stable sheaf is a celebrated theorem due to the work of many people: Beauville \cite{beauville_1983}, Mukai \cite{mukai_symplectic_1984,Muk:K3}, Kuleshov \cite{kuleshov90}, O'Grady \cite{ogrady_weight-two_1995}, Huybrechts \cite{huybrechts_birational}, Yoshioka \cite{Yoshioka_main}, and others. The complete proof is in \cite{Yoshioka_main}, a recent account and a generalization to the non-primitive case is given in \cite{perego_moduli_2018}. Thanks to the work of Toda \cite{Toda_Moduli} and Bayer and Macr\`i \cite{bayer_projectivity_2013} the classical result is enough to deduce the Main Theorem for moduli spaces of Bridgeland stable complexes. Nevertheless, we feel that giving a complete proof from scratch in this generality is conceptually better. For example, in our argument there is no need to study preservation of Gieseker stability under Fourier-Mukai. This was a difficult technical step in the classical proof, and was investigated by several people \cite{bartocci_1997}, \cite{Muk:FM}, \cite{Yoshioka_FM1,Yoshioka_FM2}. This step gets simplified by Bridgeland stability conditions: we only need to check that the equivalences we use preserve the distinguished component $\Stab^{\dagger}(X)$. Of course, since Gieseker stability can be recovered via Bridgeland stability in the large volume limit, the classical result follows from the Main Theorem.

Now we briefly explain the idea of the proof, the complete argument will be in Section \ref{sec:Deformation}. We start with a K3 surface $X$, a primitive Mukai vector $\vv \in \Halg$ with $\vv^2 \geq -2$ and a generic stability condition $\sigma$ in the distinguished component $\Stab^{\dagger}(X)$. The basic geometric input is that the Main Theorem holds for Hilbert schemes of points on a K3 surface, this is a classical result due to Beauville \cite{beauville_1983}. So, we want to reduce from our starting moduli space $M_{\sigma}(\vv)$ on $X$, to the Hilbert scheme $\Hilb^n(Y)$ on another K3 surface $Y$, in such a way that the Main Theorem remains true at every step.
For this reduction we will use the following tools:
\begin{enumerate}
    \item Derived equivalences: 
    \begin{itemize}
        \item Shifts $E \mapsto E[1]$,
        \item Tensor product with $L \in \Pic(X)$,
        \item The spherical twist $\ST_{\cO_X}$ around the structure sheaf,
        \item The Fourier-Mukai transform $\Phi_{\cE}$ with kernel the universal family of a fine, two-dimensional, projective moduli space parameterizing Gieseker-stable sheaves. 
    \end{itemize}  
    \item Existence of relative stability conditions on a smooth projective family $\cX \rightarrow C$ of K3 surfaces over a smooth quasi-projective curve, and existence of the corresponding relative moduli spaces, this is done in \cite{bayer2019stability}.    
    \item Wall-crossing for moduli spaces of spherical and isotorpic objects on any K3 surface, and for the Hilbert scheme $\Hilb^n(X)$ on a K3 surface $X$ with $\Pic(X)=\Z \cdot H$ with $H^2=2k^2(n-1)$.
\end{enumerate}
In the first five sections we show that the statement of the Main Theorem is invariant under operations of the above type, and in Section \ref{sec:Deformation} we combine them to conclude the argument. 

The argument goes roughly as follows: we begin with a sequence of autoquivalences of type $(1)$ to modify the Mukai vector $\vv$. This is done for the following reason. We can choose a polarization $H$ on $X$ with $H^2=2d$ and the new Mukai vector $\vv'$, so that its Hodge locus in the moduli space of polarized K3 surfaces of degree $2d$ contains a polarized K3 surface $(Y,H')$ with the following properties: 
\begin{enumerate}
    \item Its Picard group $\Pic(Y)$ is an hyperbolic plane.
    \item There is an algebraic class $\ww \in H_{\mathrm{alg}}^*(Y,\Z)$ such that the moduli space $M:=M_{H'}(\ww)$ is fine, non-empty, and a K3 surface. 
    \item The product of the classes $\vv$ and $\ww$ is $(\vv,\ww)=-1$.
\end{enumerate} 
Then, we deform to this K3 surface $Y$, and consider the Fourier-Mukai transform given by the universal family $\cE$ of $M$. The transformed vector is, up to tensoring with line bundles on $M$, the Mukai vector of the Hilbert scheme. 

If $\vv^2=0$ or $-2$ we argue as follows. We connect the resulting stability condition on $M$ to the Gieseker chamber with a path. This path meets finitely many walls, so we only need to study wall-crossing at each of them. For this, we prove the following result.
\begin{Thm}[Theorem \ref{semirigid1}]
Let $X$ be a K3 surface, $\vv$ be a primitive vector, with $\vv^{2}=-2$ or $\vv^2=0$. Let $\cW$ be a wall for the wall and chamber decomposition for $\vv$, and denote by $\sigma_{\pm}$ two generic stability conditions, one on each side of the wall, and $\sigma_0 \in \cW$ a generic stability condition on the wall.
\begin{itemize}
    \item If $\vv^{2}=-2$, then $M_{\sigma_{+}}(\vv) \neq \emptyset $ implies $M_{\sigma_{-}}(\vv) \neq \emptyset$. 
    \item If $\vv^{2}=0$, then there exists a spherical, $\sigma_{0}$-stable object $S$ such that either $\ST_{S}: M_{\sigma_{+}} \rightarrow M_{\sigma_{-}}$ or $\ST_{S}^{\pm 2}: M_{\sigma_{+}} \rightarrow M_{\sigma_{-}}$ are isomorphism.
\end{itemize}
\end{Thm}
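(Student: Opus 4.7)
The plan is to analyze the wall $\cW$ through the saturated rank-two sublattice $H_\cW \subset \Halg$ spanned by the Mukai vectors of $\sigma_0$-semistable objects of phase $\phi_{\sigma_0}(\vv)$, which for generic $\sigma_0 \in \cW$ has signature $(1,1)$ and contains $\vv$. The category $\mathcal{P}_{\sigma_0}(\phi_{\sigma_0}(\vv))$ of such objects is of finite length, with simple objects precisely the $\sigma_0$-stable ones; their Mukai vectors lie in $H_\cW$ and satisfy $[S_i]^2 \geq -2$. An openness reduction disposes of the case where some $E \in M_{\sigma_+}(\vv)$ remains $\sigma_0$-stable: it is then automatically $\sigma_-$-stable, and both bullets hold trivially. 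Hence we may assume $\cW$ is \emph{totally semistable} for $\vv$, so every $E \in M_{\sigma_+}(\vv)$ admits a nontrivial Jordan--H\"older filtration at $\sigma_0$ with stable factors $S_1,\ldots,S_n$, multiplicities $m_i$, and $\sum m_i[S_i]=\vv$. A uniform observation is that at least one factor is forced to be spherical: all $[S_i]$ lie in the closed positive half of $H_\cW$ cut out by $Z_{\sigma_0}$, and a reverse Cauchy--Schwarz in signature $(1,1)$ gives $(\sum m_i[S_i])^2 \geq 0$ whenever no $[S_j]$ is spherical, contradicting $\vv^2 = -2$ directly in the first case, and contradicting $\vv^2 = 0$ via primitivity (all factors would be isotropic and proportional to $\vv$, forcing the decomposition to be trivial).

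For $\vv^2 = -2$, the pairwise Ext--quiver of the $S_i$ is encoded by $\chi(S_i,S_k) = -([S_i],[S_k])$ together with $\Hom(S_i,S_k) = \Ext^2(S_i,S_k) = 0$ for $i \neq k$ (simplicity and Serre duality on a K3). I would construct $E_- \in M_{\sigma_-}(\vv)$ as a universal iterated extension of the same factors $S_i$ in the order reversed relative to the Jordan--H\"older filtration of $E_+$, where the correct reversal is dictated by the sign change of $\Real\bigl(Z_\sigma(\cdot)/Z_\sigma(\vv)\bigr)$ as $\sigma$ crosses $\cW$, and then verify $\sigma_-$-stability numerically by checking phases of all proper sub- and quotient objects.

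For $\vv^2 = 0$, take $S := S_j$ with $[S_j]^2 = -2$ as the spherical $\sigma_0$-stable object asserted by the theorem. The spherical twist $\ST_S$ acts on $\Halg$ by the reflection $\xx \mapsto \xx + (\xx,[S])[S]$. By comparing the central charges $Z_{\sigma_\pm}$ on $H_\cW$ and tracking how $\ST_S$ transforms Jordan--H\"older filtrations, one expects $\ST_S$ itself to implement the isomorphism $M_{\sigma_+}(\vv) \xrightarrow{\sim} M_{\sigma_-}(\vv)$ in the numerically simpler case $(\vv,[S]) = 1$, whereas $\ST_S^{\pm 2}$ is needed when $(\vv,[S]) = 2$, the sign in the exponent being determined by which side of the wall the twist contracts. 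Running this argument in families via the relative moduli stacks of \cite{bayer2019stability} upgrades the pointwise bijection to a scheme-theoretic isomorphism.

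The hardest step will be the uniform family-theoretic construction together with the fine numerical case analysis in $H_\cW$. In particular, the dichotomy between $\ST_S$ and $\ST_S^{\pm 2}$ reflects a genuine geometric difference between wall types, and requires a precise enumeration of how $\vv$ and the spherical classes of $H_\cW$ can be arranged relative to each other in the positive half; similarly, the reversed-extensions construction in the $\vv^2 = -2$ case must be performed uniformly in a family, not merely for a single chosen object. Relative versions of the spherical twist and of the universal extension functor in $\mathcal{P}_{\sigma_0}(\phi)$ should be the right tools to deliver the scheme-theoretic statements.
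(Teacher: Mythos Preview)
Your setup has a basic error that propagates through both cases: the lattice $H_\cW$ is \emph{not} always hyperbolic. It is either hyperbolic or negative semi-definite (it sits in the orthogonal complement of a positive vector inside a lattice of signature $(2,\rho)$, so it has at most one positive direction, and it contains $\vv$ with $\vv^2\le 0$). This is exactly what governs the dichotomy in the isotropic case, and your proposed numerical criterion $(\vv,[S])\in\{1,2\}$ is wrong. In the paper's analysis, when $\cH$ is negative semi-definite there are two $\sigma_0$-stable spherical objects $S,T$ with $(\ss,\tt)=2$ and $\vv=\ss+\tt$, so $(\ss,\vv)=0$; here only a single rational curve $\P(\Ext^1(T,S))\subset M_{\sigma_+}(\vv)$ gets destabilized, the rest of the moduli space stays stable, and $\ST_S$ is the identity on the stable locus (because $\RHom(S,E)=0$) while it flips the filtration on the rational curve. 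When $\cH$ is hyperbolic there is a \emph{unique} spherical class (up to sign), the wall is totally semistable, the JH filtration is $S^{\oplus a}\hookrightarrow E\twoheadrightarrow F$ with $F$ isotropic and $a=(\ss,\ww)>0$, and one needs $\ST_S^{\pm 2}$. Your reduction ``assume the wall is totally semistable'' is therefore illegitimate for $\vv^2=0$: the semi-definite case is genuinely mixed, and disposing of it by openness throws away the content of the theorem there.

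For $\vv^2=-2$ your ``reversed iterated extension'' is too vague to work. The first missing structural fact is that there are exactly \emph{two} distinct $\sigma_0$-stable JH factors $S,T$ (Mukai's lemma forces all factors to be spherical; then a short lattice argument in $\cH$ excludes a third). Writing everything in the basis $\ss,\tt$, the spherical classes with positive coefficients form two discrete sequences $\tt_1,\tt_2,\ldots$ and $\ss_0,\ss_{-1},\ldots$ on the two branches of a conic, related by the reflections $\rho_\ss,\rho_\tt$. The class $\vv$ is one of these, say $\ss_{-i}$, and the object on the other side is built \emph{inductively along the branch} via spherical twists: $S^-_{-1}:=\ST_S(T)$ and $S^-_{-j-1}:=\ST_{S^-_{-j}}(S^-_{-j+1})[-1]$, with $\sigma_-$-stability proved at each step using a ``simple extension'' lemma (a nontrivial extension of two $\sigma_0$-simples of the correct phase order is $\sigma_-$-stable). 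A single reversal of the JH filtration of a fixed $E$ does not give this; you need the whole inductive ladder because the multiplicities of $S$ and $T$ in $\vv=\ss_{-i}$ can be arbitrarily large. Finally, no family-theoretic machinery is needed anywhere: the maps on moduli are induced by honest autoequivalences of $D^b(X)$, so they are automatically isomorphisms of algebraic spaces once you know they are bijections on points.
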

By applying the above Theorem finitely many times we  complete the proof of the Main Theorem in the case where $\vv^2=0$ or $-2$.

A similar statement holds for $\vv^2>0$ but is much more complicated, and it is the main result of \cite{mmp}. In general, wall-crossing induces birational maps $M_{\sigma_+}(\vv) \dashrightarrow M_{\sigma_-}(\vv)$, which are not necessarily isomorphisms. Moreover, showing that this birational map is defined in codimension one requires a detailed analysis, and it relies on existence of stable complexes. 

Instead, for the positive square case we use a different argument. We note that the Picard group $\Pic(M)$ of the Fourier-Mukai partner $M$ is again an hyperbolic plane, hence is has polarizations of any degree. In particular, we can deform to a K3 surface $X'$ with $\Pic(X')=\Z \cdot H'$ with $(H')^2=2k^2(n-1)$. Up to changing the Fourier-Mukai partner $M$ with an isomorphic one obtained by wall-crossing via the above theorem, we can assume that the resulting stability condition on $X'$ lies in a domain $V(X') \subset \Stab^{\dagger}(X')$. This can be characterized as the locus of stability conditions where all the skyscraper sheaves are stable of phase one, see Definition \ref{def:V(X)} and Lemma \ref{UandV}. Under these assumptions, the Main Theorem will be established in Section \ref{subsec:Positivesquare}, where we prove the following result.

\begin{Thm}[Corollary \ref{Cor:unigonal}]
Let $X$ be a K3 surface with $Pic(X)=\Z \cdot H$ with $H^2=2d$ and $d=k^2(n-1)$ for $k>1$ integer. There is only one wall for $\vv=(1,0,1-n)$ in $V(X)$, and the shifted derived dual $\RcHom(-,\cO_X)[2]$ induces an isomorphism
\[ M_{\sigma_{+}}(\vv) \xrightarrow{\sim} M_{\sigma_{-}}(\vv),\]
where $\sigma_+$ and $\sigma_-$ are two generic stability conditions in the two chambers. In particular, the Main Theorem holds for both of them.
\end{Thm}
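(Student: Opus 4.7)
The proof I would give proceeds in three stages: setting up explicit coordinates on $V(X)$ and describing the action of the derived dual there, classifying the walls for $\vv=(1,0,1-n)$ inside $V(X)$, and finally exploiting the resulting symmetry to identify the two sides of the unique wall.

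\textbf{Setup.} Since $\Pic(X)=\Z\cdot H$, the algebraic Mukai lattice $\Halg$ has rank three with natural basis $(1,0,0)$, $(0,H,0)$, $(0,0,1)$. I would use the standard two-dimensional slice of $\Stab^{\dagger}(X)$ parametrized by $(s,t)\in\R\times\R_{>0}$ with central charge $Z_{s,t}(E)=(\exp((s+it)H),v(E))$; in these coordinates both $V(X)$ (characterized by stability of skyscraper sheaves of phase $1$, see Lemma~\ref{UandV}) and the large-volume Gieseker chamber are explicit. The shifted derived dual $D:=\RcHom(-,\cO_X)[2]$ acts on Mukai vectors by $(r,cH,s')\mapsto(r,-cH,s')$, so in particular $D(\vv)=\vv$, meaning $D$ induces an autoequivalence preserving the moduli problem for $\vv$. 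Moreover, $D$ acts on the slice by the involution $(s,t)\mapsto(-s,t)$, so it permutes walls and chambers symmetrically about the axis $s=0$.

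\textbf{Wall classification.} By the Bayer-Macr\`i criterion, walls for $\vv$ in the slice correspond to primitive rank-two hyperbolic sublattices $\la\vv,\aa\ra\subset\Halg$ with $\aa^2\geq -2$; writing $\aa=(r,cH,s')$, each such class yields a semicircular wall in the $(s,t)$-plane whose center and radius are determined by $\aa$. Imposing $H^2=2k^2(n-1)$ with $k>1$, and the additional constraint that the wall lies inside $V(X)$ (which bounds the center of the semicircle so that $(0,0,1)$ remains stable), I would carry out a direct numerical enumeration of admissible classes $\aa$. The key claim is that, up to wall equivalence, only one class survives, corresponding to a wall invariant under $s\mapsto-s$. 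The condition $k>1$ is essential here: when $k=1$, additional destabilizing configurations of Brill-Noether type appear.

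\textbf{Conclusion.} Since the unique wall is fixed by $D$, the two chambers $\sigma_+$ and $\sigma_-$ on its opposite sides are interchanged by $D$. As $D$ is an exact autoequivalence sending $\sigma_+$-stable objects to $\sigma_-$-stable objects while preserving $\vv$, it induces the isomorphism $M_{\sigma_+}(\vv)\xrightarrow{\sim}M_{\sigma_-}(\vv)$. The Main Theorem for both moduli spaces then follows, because one of the two chambers contains the large-volume limit, so the corresponding moduli space coincides with $\Hilb^n(X)$, for which the Main Theorem is Beauville's classical result \cite{beauville_1983}; the isomorphism transports the conclusion to the other chamber. The main obstacle will be the wall enumeration of Step 2: one must rigorously exclude every candidate rank-two sublattice of $\Halg$ other than the canonical one, which requires a careful use of the arithmetic of $H^2=2k^2(n-1)$ and crucially the hypothesis $k>1$.
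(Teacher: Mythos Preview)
Your overall strategy---using the $(s,t)$-slice, the symmetry of the shifted derived dual, and identifying one chamber with the Hilbert scheme---matches the paper's, but the crucial wall-enumeration step (your Step~2) proceeds quite differently.

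The paper does not classify rank-two sublattices directly. Instead, it exploits an arithmetic coincidence along the vertical line $\beta=-\tfrac{1}{k}$: there one has $\Im Z_{t,-1/k}(r,cH,s')=2dt(ck+r)/k\in\tfrac{2dt}{k}\Z_{\geq 0}$ for objects of the heart, and for $\vv=(1,0,1-n)$ this attains the minimal positive value $\tfrac{2dt}{k}$. Hence no proper subobject of finite slope can destabilise along this line, so the line meets no wall (and one checks separately that no spherical class has $\Im Z=0$ there, so the line lies entirely in $V(X)$). A second geometric lemma, using Bertram's nested wall theorem and the hyperbolicity of the rank-two lattice $\cH$ attached to a putative wall, shows that \emph{every} semicircular wall in the half-plane $\beta<0$ would have to intersect the line $\beta=-\tfrac{1}{k}$. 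These two facts together leave no walls in $\{\beta<0\}$; the derived dual then transports this to $\{\beta>0\}$, so only the vertical Hilbert--Chow wall $\beta=0$ remains. This two-lemma argument is short and bypasses any case-by-case lattice enumeration.

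Your proposed direct enumeration could in principle succeed, but two points need correction. First, the constraint you invoke---``the wall lies inside $V(X)$, which bounds the center of the semicircle so that $(0,0,1)$ remains stable''---is not right: $V(X)$ is the entire upper half-plane minus isolated points, and skyscraper sheaves are stable of phase~$1$ throughout it by definition, so this imposes no bound on wall centres. The genuine constraints you would need are that a destabilising subobject lie in the tilted heart $\Coh^\beta(X)$ along the wall (forcing sign conditions on $c$ and $s'$), together with the hyperbolicity inequality for $\cH$. Second, $D=\RcHom(-,\cO_X)[2]$ is an \emph{anti}-autoequivalence, not an autoequivalence; its action on stability conditions and the induced isomorphism of moduli spaces require the contravariant formalism of Proposition~\ref{prop:equivalences} (the paper cites \cite[Proposition~2.11]{mmp} for this). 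With these adjustments your outline is sound, but the paper's minimal-imaginary-part trick is considerably cleaner than a full sublattice enumeration.
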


\subsection*{Structure of the paper}
In Section \ref{sec:bridgelandstability} we review the theory of Bridgeland stability conditions on K3 surfaces. We pay particular attention to those results in \cite{bridgeland_stability_2006} which rely on existence of stable sheaves. The main purpose of this section is to recall the definition of the distinguished component $\Stab^{\dagger}(X)$ of stability conditions. We also recall a result by Hartmann: the derived equivalences above preserve the distinguished component $\Stab^{\dagger}(X)$. 

In Section \ref{sec:geometry} we review some aspects of theory of moduli of stable complexes, and hyperk\"{a}hler varities. In Section \ref{sec:Wall crossing} we study the wall-crossing behavior for Mukai vectors $\vv$ with $\vv^2 \leq 0$, and prove Theorem \ref{semirigid1}. In Section \ref{subsec:Positivesquare} we study wall-crossing for the Hilbert scheme on a K3 surface with $\Pic(X)=\Z\cdot H$ and degree $H^2=2k^2(n-1)$. In this section we restrict our attention to stability conditions of the form $\sigma_{\alpha H,\beta H}$, with $\alpha>0$. In Section \ref{sec:Deformation} we complete the proof of the Main Theorem, by reducing to the case of the Hilbert scheme. 

\subsection*{Acknowledgments}
I want to thanks my advisors Emanuele Macr\`{i} and Antonio Rapagnetta for suggesting the problem and for many useful and instructive discussions. I am partially supported by the MIUR Excellence Department Project awarded to the Department of Mathematics, University of Rome Tor Vergata, CUP E83C18000100006 and the ERC Synergy Grant ERC- 2020-SyG-854361-HyperK.

\section{Review: Bridgeland stability conditions} 
\label{sec:bridgelandstability}
In this section we review the theory of Bridgeland stability on K3 surfaces, as introduced in \cite{bridgeland_stability_2006}. The main objective is to define the \textit{distinguished component} $\Stab^{\dagger}(X)$ of the space of stability conditions and to show that (some) derived equivalences preserve this component. All of the results here are well known, due to Bridgeland and Hartmann, but some proofs in the literature use existence of slope stable spherical sheaves. Here we give a treatment that avoids that problem by slightly modifying the standard definitions. 

\subsection{Basic definitions and results}
Let $X$ be a K3 surface, denote by $D^b(X)$ the bounded derived category of coherent sheaves on $X$, and by \[\Halg:=H^0(X,\Z) \oplus \NS(X) \oplus H^4(X,\Z)\] the algebraic part of the cohomology. It comes equipped with an integral even bilinear form of signature $(2,\rho(X))$, called \textit{Mukai pairing} and defined by:
\[ (\vv,\vv')=\Delta.\Delta' - rs'-r's, \]
where we write $\vv=(r,\Delta,s)$ and $\vv'=(r',\Delta',s')$. Recall that given an object $E \in D^b(X)$, its \textit{Mukai vector} $v(E) \in \Halg$ is defined as \[v(E):=\ch(E).\sqrt{\td_X}=(\ch_0(E),\ch_1(E),\ch_2(E)+\ch_0(E)).\]

\begin{Defi}
A (full, numerical) stability condition is a pair $\sigma=(Z,\cA)$, where $Z: \Halg \rightarrow \C$ is a group homomorphism (called \textit{central charge}), and $\cA \subset D^b(X)$ is a heart of a bounded t-structure, satisfying the following properties:
\begin{enumerate}
    \item For any $0 \neq E \in D^b(X)$, the central charge $Z(E)$ lies in the semi-closed upper half-plane
    \[ \H:=\R_{>0}e^{(0,1]i\pi} \]
    \item Given an object $0 \neq E \in \cA$ we define the slope as  $\nu_{\sigma}(E):=\frac{-\Re Z(E)}{\Im Z(E)}$. A non-zero object $E$ is said (semi)stable if for every proper subobject $F \subset E$ the following inequality holds:
    \[ \nu_{\sigma}(F) < (\leq)  \nu_{\sigma}(E). \] 
    Then, every $0 \neq E\in \cA$ has a Harder-Narasimhan filtration, i.e. a filtration
    \[ 0=E_0 \subset E_1 \subset \dots \subset E_n=E, \]
    with semistable quotients of decreasing slope.
    \item Fix a norm $\norm *$ on $\Halg$. Then there is a constant $C >0$, such that for every semistable object $E \in \cA$, we have
    \[ \norm E < C|Z(E)|\]
\end{enumerate}
\end{Defi}

Given a pair $(Z,\cA)$ as above, we can extend the notion of stability to the full derived category $D^b(X)$ in the following way. For every $\phi \in (0,1]$ define $\cP(\phi)$ as the full subcategory of semistable objects $E \in \cA$ with $Z(E) \in \R_{>0}e^{i\phi}$. Then extend this definition to every $\phi \in \R$ by the compatibility condition $\cP(\phi+n)=\cP(\phi)[n]$. 

Every non-zero object $E \in D^b(X)$ has a Harder-Narasimhan filtration, i.e. a sequence of maps
\[0=E_{0} \rightarrow E_1 \rightarrow \dots \rightarrow E_{n-1}\rightarrow E_{n}=E, \]
with cones $A_i$ that are semistable of decreasing phases.
The phases of the first and last Harder-Narasimhan factors are denoted by $\phi_{\sigma}^+(E)$ and $\phi_{\sigma}^-(E)$. 
The category $\cP(\phi)$ is abelian of finite lenght, so every semistable object has a Jordan-Holder filtration, i.e. a finite filtration with stable cones of the same phase. Two semistable objects with the same associated graded are called $S$-equivalent.

It is shown in \cite[Proposition $5.3$]{Bridgeland_triangulated} that the data of $(Z,\{\cP(\phi)\}_{\phi \in \R})$ is equivalent to the data of the heart $\cA$ and the central charge $Z$. The inverse equivalence is given by forming the category $\cP(0,1]$, where $\cP((a,b])$ is the full subcategory of $D^b(X)$ with objects 
\[ \{ E \in D^b(X) \mid \phi^{-}(E),\phi^+(E) \in (a,b]\}.\]

The definition was extended in \cite{bayer2019stability} to include openness of stability in families and existence of moduli spaces. Precisely we add the conditions:
\begin{enumerate}
    \item[(4)] For every scheme $T$ and for every $T$-perfect complex $E \in D_{T-\textrm{perf}}(S \times T)$ the set 
    \[ 
    \{ t \in T \mid E_t \in \cP(\phi) \}
    \]
    is open.
    \item[(5)] for every $\vv \in \Halg$ and every $\phi \in \R$ such that $Z(\vv) \in \R_{>0}e^{i\pi\phi}$ the functor
    \[
    T \rightarrow \mathfrak{M}_{\sigma}(\vv,\phi):=\{ E \in D_{T-\textrm{perf}}(S \times T) \mid E_t \in \cP(\phi) \textrm{ and } v(E_t)=\vv \}
    \]
    is bounded. 
\end{enumerate}

Let $\Stab(X)$ be the set of all stability condition. It has a natural topology induced by a metric, see \cite[Section $6$]{Bridgeland_triangulated} for the precise form of the metric. This topology can be characterized as the coarsest topology that makes the functions \[\sigma \mapsto Z \ \mathrm{and} \  \sigma \mapsto \phi_{\sigma}^{\pm}(E)\] continuous, for every $0 \neq E \in D^b(X)$.
The main result in \cite{Bridgeland_triangulated} is the following. 

\begin{Thm}[Bridgeland Deformation Theorem]
The map
\begin{align*}
    \pi:\Stab(X) &\rightarrow \Hom(\Halg,\C)\\
    \sigma=(Z,\cA)& \mapsto Z 
\end{align*}
is a local homeomorphism. In particular, every connected component of $\Stab(X)$ is a complex manifold of dimension $\mathrm{rk}(\Halg)$.
\end{Thm}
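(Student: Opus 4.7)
The plan is to prove $\pi$ is a local homeomorphism by splitting into continuity, local injectivity, and local surjectivity. Continuity is automatic from the definition of the topology on $\Stab(X)$, which was chosen precisely to make $\sigma \mapsto Z_\sigma$ continuous. It then suffices to construct, for each $\sigma = (Z,\cP) \in \Stab(X)$, a continuous local inverse to $\pi$ on a neighbourhood of $Z$ in $\Hom(\Halg,\C)$.

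For local injectivity, I would suppose $\sigma_1 = (Z,\cP_1)$ and $\sigma_2 = (Z,\cP_2)$ lie sufficiently close and share the same central charge $Z$. Any $\sigma_1$-semistable object $E$ satisfies $Z(E) \in \R_{>0}\,e^{i\pi\phi}$ with $\phi = \phi_{\sigma_1}(E)$, while its $\sigma_2$-HN factors have central charges in the open upper half-plane summing to $Z(E)$ and phases close to $\phi$. A convexity argument on the upper half-plane should force all these factors to have $\sigma_2$-phase exactly $\phi$, whence $E$ is $\sigma_2$-semistable and $\cP_1 = \cP_2$.

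For local surjectivity, fix $\sigma = (Z,\cP)$ and a small perturbation $W$ of $Z$. The support property is the essential input: the bound $\norm{E} \leq C|Z(E)|$ on every $\sigma$-semistable $E$ yields
\[ |W(E) - Z(E)| \leq \norm{W-Z}\cdot\norm{E} \leq C\,\norm{W-Z}\,|Z(E)|, \]
so that, for $W$ close enough to $Z$, every $\sigma$-semistable $E$ has $W(E)$ in the open upper half-plane with a well-defined $W$-phase $\phi_W(E)$ within some small $\eta > 0$ of $\phi_\sigma(E)$, uniformly in $E$. I would then define the new slicing $\cQ$ by declaring each $\sigma$-stable $S$ to be $\tau$-stable of phase $\phi_W(S)$, and taking $\cQ(\phi)$ to be the extension closure of all such $S$. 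An HN filtration for $(W,\cQ)$ of a nonzero object should arise by first taking its $\sigma$-HN filtration and then refining each $\sigma$-semistable factor via its Jordan--H\"older filtration in $\cP(\psi)$, regrouping the resulting $\sigma$-stables by $W$-phase; local finiteness of $\cP(\psi)$ ensures termination.

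The main obstacle will be verifying that this refinement is genuinely a Harder--Narasimhan filtration for $(W,\cQ)$: the successive factors must have strictly decreasing $W$-phase, and the pair $(W,\cQ)$ must satisfy the support property with a (slightly worse) constant $C' \approx C/(1 - C\norm{W-Z})$ and lie close to $\sigma$ in $\Stab(X)$. Both should follow from the quantitative phase estimate above, but the details require careful bookkeeping. Conditions $(4)$ and $(5)$ of the refined definition should be preserved by such small deformations, since openness and boundedness are inherited from $\sigma$ once one restricts to a bounded range of cohomology degrees. This produces the desired continuous local inverse to $\pi$ and completes the proof.
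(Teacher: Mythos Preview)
The paper does not prove this theorem; it is stated as a review result and attributed to \cite{Bridgeland_triangulated}. So your proposal should be compared against Bridgeland's original argument rather than anything in the present paper.

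Your plan for continuity and local injectivity is fine and matches the standard argument. The gap is in local surjectivity, specifically in your construction of the new slicing $\cQ$. You propose to set $\cQ(\phi)$ equal to the extension closure of the $\sigma$-stable objects $S$ with $\phi_W(S)=\phi$. This misses the new stable objects that appear under perturbation. Concretely, take two non-isomorphic $\sigma$-stable objects $S,T$ of the same $\sigma$-phase with $\phi_W(S)<\phi_W(T)$, and a non-split extension
\[
0 \to S \to E \to T \to 0
\]
in the finite-length abelian category $\cP(\psi)$. Then $E$ has only $0$, $S$, $E$ as subobjects in $\cP(\psi)$, and since $\phi_W(S)<\phi_W(E)$ the object $E$ is $W$-stable of phase $\phi_W(E)$ strictly between $\phi_W(S)$ and $\phi_W(T)$. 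But $E$ does not lie in your $\cQ(\phi_W(E))$, because neither $S$ nor $T$ has that $W$-phase. Worse, your proposed HN filtration for $E$ (refine the $\sigma$-JH filtration and regroup by $W$-phase) yields the single filtration $0\subset S\subset E$ with factors of \emph{increasing} $W$-phase, which is not an HN filtration. So $(W,\cQ)$ as you define it is not a slicing.

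The underlying issue is that a Jordan--H\"older filtration only determines the multiset of factors, not their order, and you cannot in general reorder them; regrouping $\sigma$-stables by $W$-phase is not a well-defined operation on filtrations. Bridgeland's actual proof avoids this by never singling out $\sigma$-stable objects. Instead he works with the thin quasi-abelian categories $\cP((a,b))$ for $b-a<1$, shows (using the support property) that $W$ restricted to such a slice still satisfies the HN property, and defines $\cQ$ via these HN filtrations. This automatically produces the new stable objects such as $E$ above. If you want to repair your argument, that is the construction to use.
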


\begin{Rem}[{{\cite[Lemma $8.2$]{Bridgeland_triangulated}}}]
There are two natural actions on the space of stability conditon: a left action by the group $\Aut(D^b(X))$ of exact autoequivalences of $D^b(X)$, and a right action by $\widetilde{GL}^+_2(\R)$, the universal cover of the group ${GL}^+_2(\R)$. Given an autoequivalence $\Phi$ and a stability condition $\sigma=(Z,\cP)$ we set $\Phi(\sigma):=(Z \circ \Phi^{-1},\cP')$, where $\cP'(t):=\Phi(\cP(t))$. The action of $\widetilde{GL}^+_2(\R)$ is given by lifting the right action of $GL_2^+(\R)$ on $\Hom(\Halg,\C)$.
\end{Rem}

We are interested in knowing how stability varies when we deform $\sigma$; this was first done in \cite[Proposition $9.3$]{bridgeland_stability_2006}. See also \cite[Proposition $3.3$]{bayer_local} and \cite[Proposition $2.8$]{Toda_Moduli}.

\begin{Prop}\label{walls}
Fix a class $\vv \in \Halg$. There exists a locally finite set of real codimension one manifold with boundary, called walls, in $\Stab(X)$, such that when $\sigma$ varies within a chamber (a connected component of the complement of the set of walls), the set of $\sigma$-semistable and $\sigma$-stable objects does not change. If $\vv$ is primitive and $\sigma$ varies in a chamber, every semistable object is stable.
\end{Prop}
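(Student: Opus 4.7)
The plan is to combine the Bridgeland deformation theorem with the support property (axiom (3)) and the openness of semistability in families (axiom (4)). The key observation is that destabilization is a numerical phenomenon cut out by rational linear conditions on the central charge, while the support property restricts the relevant conditions to a locally finite collection. Concretely, suppose $\sigma_0=(Z_0,\cA_0)$ lies on a wall for $\vv$, witnessed by a strictly semistable $E$ with $v(E)=\vv$ and a destabilizing subobject $F\subset E$ of class $\ww:=v(F)$ satisfying $Z_0(\ww)/Z_0(\vv)\in \R_{>0}$. Since $Z_0(\ww)$ and $Z_0(\vv-\ww)$ lie on the same ray in the upper half-plane, $|Z_0(\ww)|\leq |Z_0(\vv)|$, and the support property gives $\norm{\ww}\leq C\,|Z_0(\vv)|$. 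Fixing a compact neighborhood $U\subset \Stab(X)$ and choosing $C$ uniform over $U$ (which requires that the support property is an open condition), only finitely many lattice classes $\ww\in \Halg$ can appear as such destabilizers. For each such $\ww$ not a rational multiple of $\vv$ I set
\[
W_{\ww} := \{\, \sigma \in U : Z(\ww)/Z(\vv)\in \R_{>0}\,\},
\]
which, via the local homeomorphism $\pi$ of the deformation theorem, is the preimage of a component of a real hyperplane in $\Hom(\Halg,\C)$, hence a real codimension one submanifold with boundary. Their union is the desired locally finite set of walls.

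Next, I would establish constancy of (semi)stability in chambers. Given $E$ with $v(E)=\vv$, the set of $\sigma$ for which $E$ is $\sigma$-semistable is open by axiom (4). It is also closed in any chamber $\Omega$: if $E$ is $\sigma$-unstable at some $\sigma\in\Omega$, then its first Harder--Narasimhan factor provides a class of phase strictly larger than $\phi_\sigma(\vv)$, and this strict inequality persists in a neighborhood by continuity of the phases. Hence semistability is constant on connected components of the complement of the walls. For stability, the distinction from strict semistability involves the existence of a subobject with $\phi_\sigma(\ww)=\phi_\sigma(\vv)$, i.e.\ $Z(\ww)/Z(\vv)\in \R_{>0}$, which by the first step defines a wall and therefore cannot occur inside $\Omega$.

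Finally, for the primitive case, suppose $\vv$ is primitive, $\sigma\in\Omega$, and $E$ is strictly semistable with $v(E)=\vv$. Its Jordan--H\"older filtration produces stable factors with Mukai vectors $\ww_1,\dots,\ww_m$ summing to $\vv$, all with $Z(\ww_i)/Z(\vv)\in \R_{>0}$. Primitivity forces any $\ww_i$ proportional to $\vv$ over $\Q$ to be of the form $n_i\vv$ with $n_i\in\Z_{\geq 1}$; if every $\ww_i$ were of this form, $\sum n_i\vv=\vv$ would give $m=1$ and $\ww_1=\vv$, making $E$ stable and contradicting strict semistability. Thus some $\ww_i$ is not a multiple of $\vv$, which places $\sigma$ on the wall $W_{\ww_i}$, contrary to $\sigma\in\Omega$. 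The main technical hurdle throughout is arranging that the support property constant $C$ is uniform over a compact $U$: this is exactly what converts finiteness of destabilizers at a single point into local finiteness of the wall collection, and amounts to treating the support property as an open condition on $\Stab(X)$.
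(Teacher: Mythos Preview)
The paper does not give its own proof of this proposition; it is quoted from \cite[Proposition~9.3]{bridgeland_stability_2006}, \cite[Proposition~3.3]{bayer_local}, and \cite[Proposition~2.8]{Toda_Moduli}. Your overall strategy---use the support property to bound the classes of possible destabilizers over a compact set, define the walls as the finitely many phase-alignment loci $W_{\ww}$, and then argue constancy inside chambers---is the standard one used in those references.

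There is, however, a genuine gap in your constancy argument. You claim that for a fixed object $E$ the set $\{\sigma : E \text{ is } \sigma\text{-semistable}\}$ is open ``by axiom~(4)''. Axiom~(4) in this paper is openness of semistability in the base $T$ of a family of objects $E_t$, with $\sigma$ fixed; it says nothing about varying $\sigma$. In fact the opposite is true: semistability of a fixed $E$ is a \emph{closed} condition on $\Stab(X)$, since it is the equality $\phi_\sigma^+(E)=\phi_\sigma^-(E)$ of two continuous functions (the paper records exactly this in the Remark immediately following the proposition). So your ``open and closed'' argument does not go through as written.

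The correct way to get constancy is to work entirely on the closed side, together with the wall definition. If $E$ is $\sigma_0$-semistable and $\sigma_1$-unstable for $\sigma_0,\sigma_1$ in the same chamber $\Omega$, take a path in $\Omega$ and let $\sigma_*$ be the last point where $E$ is semistable (this exists by closedness). For $\sigma$ just past $\sigma_*$, the maximal destabilizing subobject $F\subset E$ is $\sigma$-semistable with $\phi_\sigma(F)>\phi_\sigma(E)$; its class $\ww=v(F)$ lies in your finite set by the support bound. Since $F$ remains a subobject and semistable as $\sigma\to\sigma_*$, continuity of phases forces $\phi_{\sigma_*}(\ww)=\phi_{\sigma_*}(\vv)$, i.e.\ $\sigma_*\in W_{\ww}$, contradicting $\sigma_*\in\Omega$. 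For the stability part one argues similarly that a strictly semistable object at some $\sigma\in\Omega$ would place $\sigma$ on a wall. Your treatment of the primitive case is fine once this is fixed. The other point you flag---uniformity of the support constant over a compact set---is indeed needed and is handled in the cited references (and in the later literature via the quadratic-form reformulation of the support property).
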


\begin{Defi}
Let $\vv \in \Halg$. We say a stability condition $\sigma$ is $\vv$ generic if is not on a wall for $\vv$. We say a stability condition is generic on a wall if it lies on only one wall.
\end{Defi}

\begin{Rem}
If an object $E$ is semistable in a chamber, it continues to be semistable on a wall of that chamber. Indeed, the condition for $E$ to be semistable is given by the equality $\phi^{+}_{\sigma}(E)=\phi^{-}_{\sigma}(E)$, which is a closed condition on the space of stability conditions. 
\end{Rem}

\subsection{Construction of stability conditions on K3 surfaces.}\label{subsec:stabilityK3} We review the construction of stability conditions on K3 surfaces, given in \cite{bridgeland_stability_2006}. 
Let $\omega \in NS(X) \otimes \R$ ample.
Recall the definition of the slope of a coherent sheaf $E \in \Coh(X)$, 
\[ \mu_{\omega}(E):=
\begin{cases}
\frac{\omega.c_1(E)}{\omega^2r(E)}  & \text{if } r(E)>0,\\
+\infty & \mathrm{otherwise},
\end{cases}
\]
where $r(E)$ denotes the rank of $E$.

\begin{Defi}%[{{\cite[Definition $2.1$]{bayer_wallcross_bn}}}]
A coherent sheaf $E \in \Coh(X)$ is slope (semi)stable if for every proper subsheaf $A \subset E$ we have
\[ \mu_{\omega}(A) < (\leq)  \mu_{\omega}(E/A). \]
\end{Defi}

For later use we also recall the definition of $B$-twisted Gieseker stability, for $B$ a rational divisor. Note that if $B=0$ we recover the classical notion of Gieseker stability. 

\begin{Defi}
Let $\omega, B \in \NS(X)_{\Q}$, with $\omega$ ample. We define the $B$-twisted Hilbert polynomial of a coherent sheaf $E$ as
\[ P(E,m):=\int_X{e^{m\omega-B}}.v(E). \]
A pure $d$-dimensional coherent sheaf $E$ is $B$-twisted Gieseker (semi)stable if, for every proper non trivial subsheaf $F \subset E$ we have
\[ \frac{P(F,m)}{\alpha_d(F)} < (\leq) \frac{P(E,m)}{\alpha_d(E)}, \]
for $m >>0$, where $\alpha_d(E)$ is the degree $d$ coefficient of $P(E,m)$. 
\end{Defi}

\begin{Rem}
Similarly to Bridgeland stability, both slope stability and Gieseker stability satisfy the existence of Harder-Narasimhan filtrations. That is every non-zero coherent sheaf $E \in \Coh(X)$ has a filtration with slope-semistable (resp. Gieseker semistable) quotients of decreasing slope (resp. decreasing reduced Hilbert polynomial).
\end{Rem}

Now consider the pair $\sigma_{\omega,B}=(Z_{\omega,B},\Coh^{\omega,B})$ where
\[ Z_{\omega,B}(\vv):=(\vv,\exp(B+i\omega)),\]
and $\Coh^{\omega,B}(X)$ is the tilt of $\Coh(X)$ with respect to the torsion pair $(\cT^{\beta},\cF^{\beta})$, defined as follows
\begin{align*}
    \cT^{\beta}&:=\{ T \in \Coh(X) \mid \textrm{All HN factors $A_{i}$ of $T$ satisfy}\  \mu_{\omega}(A_{i})>\frac{\omega.B}{\omega^{2}} \};\\
    \cF^{\beta}&:=\{ F \in \Coh(X) \mid \textrm{All HN factors $A_{i}$ of $F$ satisfy} \  \mu_{\omega}(A_i) \leq \frac{\omega.B}{\omega^2} \},
\end{align*}
where the Harder-Narasimhan factors are with respect to slope stability.

\begin{Defi}
An object $E \in D^b(X)$ is spherical if 
\[
\Ext^i(E,E)=
\begin{cases}
\C & \text{if } i=0,2 \\
0 & \text{otherwise.}
\end{cases}
\]
\end{Defi}

\begin{Thm}[{{\cite[Lemma $6.2$]{bridgeland_stability_2006}}}]
The above construction gives a stability condition $\sigma_{\omega,B}$ on $D^b(X)$, provided $Z_{\omega,B}(E) \not \in \R_{\leq 0}$ for every spherical torsion-free sheaf $E$. 
\end{Thm}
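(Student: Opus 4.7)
The plan is to verify, in turn, each of the defining axioms of a numerical stability condition for the pair $(Z_{\omega,B},\Coh^{\omega,B}(X))$. First, I would check that $(\cT^\beta,\cF^\beta)$ is a torsion pair on $\Coh(X)$: every coherent sheaf splits uniquely, via its slope-Harder--Narasimhan filtration, into its maximal subsheaf with all slope-HN factors of slope strictly greater than $\omega\cdot B/\omega^2$ and the corresponding quotient with all slope-HN factors of slope at most $\omega\cdot B/\omega^2$. Standard tilting theory then yields that $\Coh^{\omega,B}(X)$ is the heart of a bounded t-structure on $D^b(X)$.

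The main work is positivity: $Z_{\omega,B}(E)\in\H$ for every $0\neq E\in\Coh^{\omega,B}(X)$. Writing the distinguished triangle $H^{-1}(E)[1]\to E\to H^0(E)$ with $H^{-1}(E)\in\cF^\beta$ and $H^0(E)\in\cT^\beta$, and using extension-closedness of $\H$ and of $-\H$, one reduces to checking $Z(T)\in\H$ for $0\neq T\in\cT^\beta$ and $Z(F)\in -\H$ for $0\neq F\in\cF^\beta$. Applying slope-HN and then Jordan-H\"older further reduces to the case of a single slope-stable sheaf. Torsion sheaves are immediate: those supported on a curve give $\Im Z>0$, while skyscrapers have $\Im Z=0$ and $\Re Z=-\mathrm{length}<0$. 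For a torsion-free slope-stable $E$, the computation $\Im Z_{\omega,B}(E)=r\omega^2\bigl(\mu_\omega(E)-\omega\cdot B/\omega^2\bigr)$ settles every case except the boundary one $\mu_\omega(E)=\omega\cdot B/\omega^2$.

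This boundary case is where the hypothesis enters. Here $\Im Z(E)=0$, so one must verify the strict inequality $\Re Z(E)>0$. Slope-stability on a K3 forces $v(E)^2\geq -2$ via Mukai, with equality exactly when $E$ is spherical. If $v(E)^2\geq 0$, the Bogomolov inequality $\ch_1(E)^2\geq 2r\ch_2(E)$ combined with the Hodge index theorem applied to the $\omega$-orthogonal class $\ch_1(E)-rB$ delivers the desired strict positivity. If instead $v(E)^2=-2$ so that $E$ is spherical, then combined with $\Im Z(E)=0$ the statement is precisely the given hypothesis $Z_{\omega,B}(E)\notin\R_{\leq 0}$. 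This is the step where classical treatments invoke the existence of slope-stable spherical sheaves; here that input is folded directly into the hypothesis, which is exactly the point the author flagged earlier in the section.

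The remaining axioms are essentially routine. The Harder-Narasimhan property follows from Noetherianity of $\Coh^{\omega,B}$ (inherited from $\Coh(X)$ through the tilt) combined with Bridgeland's general criterion from \cite{Bridgeland_triangulated}; the support property comes from the Bogomolov-Gieseker inequality, which bounds $\norm{v(E)}$ quadratically in $|Z_{\omega,B}(E)|$ on semistable objects; and the openness and boundedness conditions $(4)$ and $(5)$ are established in \cite{bayer2019stability}. The main obstacle is the boundary case of positivity, where Mukai's bound $v(E)^2\geq -2$, the Bogomolov and Hodge index inequalities, and the spherical hypothesis have to be orchestrated to cover all slope-stable factors at once.
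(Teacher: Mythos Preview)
The paper does not supply its own proof of this statement: it is recorded as a theorem with a bare citation to \cite[Lemma~6.2]{bridgeland_stability_2006}, and the surrounding discussion only uses the result. So there is nothing in the paper to compare your argument against directly.

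That said, your outline is a faithful reconstruction of Bridgeland's original argument, and it is essentially correct. Two small points are worth tightening. First, in the boundary case with $v(E)^2\geq 0$ you invoke ``the Bogomolov inequality $\ch_1(E)^2\geq 2r\ch_2(E)$''; that inequality only gives $v(E)^2\geq -2r^2$, which is too weak. What actually does the work is the bound $v(E)^2\geq 0$ you already derived from simplicity plus evenness of the Mukai form, together with Hodge index on the $\omega$-orthogonal class $\Delta-rB$: one gets $\Re Z(E)=\tfrac{1}{2r}\bigl(v(E)^2+r^2\omega^2-(\Delta-rB)^2\bigr)>0$ directly. Second, the claim that $\Coh^{\omega,B}(X)$ inherits Noetherianity from $\Coh(X)$ ``through the tilt'' is not true in general for tilted hearts; Bridgeland establishes the HN property by a separate finiteness argument (and in later treatments it is often deduced from the support property). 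Neither point affects the overall strategy, which matches the cited source.
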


\begin{Defi}\label{def:V(X)}
The set of stability conditions $\sigma_{\omega,B}=(Z_{\omega,B},\Coh^{\omega,B})$, with $Z_{\omega,B}(E) \not \in \R_{\leq 0}$ for every spherical sheaf $E$ is denoted by $V(X)$. We define the \textit{geometric chamber} $U(X)$ as the subset \[\widetilde{GL_{2}}^{+}(\R).V(X) \subseteq \Stab(X)\] obtained from $V(X)$ via the action of $\widetilde{GL_{2}}^{+}(\R)$. A stability condition is \textit{geometric} if it belongs to $U(X)$.
\end{Defi}

To define the distinguished component we need to show that the sets $V(X)$ and $U(X)$ are connected. To show this we follow the proof in \cite{bridgeland_stability_2006} and introduce several auxiliary spaces. Using the Mukai pairing form on $\Halg$ we can identify the central charge $Z$ of a stability condition with a vector $\Omega_Z \in \Halg \otimes \C$. 

Define $\cP(X) \subset \Halg$ as the set of vectors $\Omega$ whose real and imaginary parts span positive definite two-planes in $\Halg \otimes \R$. Define $\cP_0(X)$ as the subset of $\cP(X)$ of classes not orthogonal to any spherical class:
\[ \cP_0(X):=\{ \Omega \in \cP(X) \mid (\Omega,\delta) \neq 0 \textrm{ for every } \delta \in \Delta(X)\}\]
where $\Delta(X):=\{\delta \in \Halg \mid \delta^2=-2\}$.
Consider the subset of $\cP_0(X)$ given by the vectors $\Omega$ obtained by the construction above:
\[ \cK(X):=\{ \Omega \in \cP_0(X) \mid \Omega=\exp(B+i\omega) \textrm{ with } \omega \in \mathrm{Amp}(X) \textrm{ and } B \in \NS(X) \otimes \R \} \]
The set $\cP_0(X)$ has two connected components, we call $\cP^+_0(X)$ the one containing $\cK(X)$. 

\begin{Prop}[{{\cite[Proposition $8.3$]{bridgeland_stability_2006}}}]
The set $\cP_{0} \subset \cN(X) \otimes \C$ is open, and the restriction 
\[ \pi: \pi^{-1}(\cP_{0}(X)) \rightarrow \cP_{0}(X)\]
is a covering map. 
\end{Prop}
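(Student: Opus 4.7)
The proposition has two independent parts: openness of $\cP_0(X)$ and the covering property of $\pi$ over it. I would handle these separately.

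\emph{Openness.} The larger set $\cP(X) \subset \Halg \otimes \C$ is clearly open, since spanning a positive-definite $2$-plane is an open condition on the pair $(\Real \Omega, \Imm \Omega)$. To pass to $\cP_0(X)$ it suffices to verify that the family of hyperplanes $\{\delta^\perp\}_{\delta \in \Delta(X)}$ is locally finite in $\cP(X)$. Fix $\Omega \in \cP(X)$, let $P$ be the positive-definite $2$-plane it spans, and $P^\perp$ its negative-definite orthogonal complement. For $\delta \in \Delta(X)$ decompose $\delta = \delta_P + \delta_{P^\perp}$: from $\delta^2 = -2$ and $\delta_P^2 \ge 0$ we get $-\delta_{P^\perp}^2 \le 2 + \delta_P^2$, so if $(\Omega', \delta)$ is small for $\Omega'$ near $\Omega$ then $\delta_P$ is small, and hence $\delta_{P^\perp}$ lies in a bounded subset of $P^\perp$. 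Discreteness of the lattice $\Halg$ then leaves only finitely many candidates $\delta$, none of which is actually orthogonal to $\Omega$; shrinking the neighborhood avoids each of them.

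\emph{Covering property.} By the Bridgeland Deformation Theorem, $\pi$ is a local homeomorphism, so uniqueness of path lifts is automatic from the Hausdorff property of $\Stab(X)$. I would then prove the path-lifting property for $\pi$ restricted to $\pi^{-1}(\cP_0(X))$: given a path $\gamma\colon [0,1] \to \cP_0(X)$ and $\sigma_0 \in \pi^{-1}(\gamma(0))$, let $s \in (0,1]$ be the supremum of times for which $\gamma|_{[0,t]}$ admits a lift starting at $\sigma_0$. If $s < 1$, or if the lift does not extend to $s$, this must be caused by the lifted path $\sigma_t$ leaving every compact set of $\Stab(X)$ as $t \to s^-$. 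The support property bounds norms of semistable objects by their central charges, and Proposition \ref{walls} gives a locally finite wall structure; combining these, the escape from compact sets can happen only if there is a class $\vv \in \Halg$ with $Z_{\gamma(s)}(\vv) = 0$ admitting $\sigma_t$-semistable representatives for $t$ arbitrarily close to $s$. Surjectivity of $\pi$ onto each connected component of $\cP_0(X)$ that meets the image follows from this same lifting, since $\cK(X) \subset \cP_0(X)$ is already in the image.

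\emph{Main obstacle.} The delicate point, which I expect to be the hardest step, is to show that such an obstructing class $\vv$ may be chosen in $\Delta(X)$, thereby contradicting $\gamma(s) \in \cP_0(X)$. My approach would follow Bridgeland: among the vanishing classes take one of minimal rank (or minimal mass in the limit), form a limiting Jordan--H\"{o}lder filtration of a semistable representative at $t = s$, and analyze its stable factors. Their classes lie in the real span of $\gamma(s)$ and $\vv$; combined with the Riemann--Roch bound $v(E)^2 \ge -2$ for Mukai vectors of stable objects and the minimality of $\vv$, one of these factor classes must satisfy $\delta^2 = -2$, i.e.\ lie in $\Delta(X)$. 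This contradicts $\gamma(s) \in \cP_0(X)$, forcing $s = 1$ and the extension of the lift, and completes the verification that $\pi$ is a covering map onto its image.
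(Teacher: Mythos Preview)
The paper does not give its own proof of this proposition; it is simply quoted from \cite[Proposition~8.3]{bridgeland_stability_2006} without argument, so there is nothing to compare against beyond Bridgeland's original proof. Your outline is the correct strategy and is essentially Bridgeland's: openness via local finiteness of the $(-2)$-hyperplanes, and the covering property via path-lifting for the local homeomorphism $\pi$.

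One point deserves sharpening. Your resolution of the ``main obstacle'' is more elaborate than necessary and, as written, a bit imprecise: there is no stability condition at $t=s$ yet, so speaking of a Jordan--H\"older filtration there is premature, and the claim that factor classes lie in the real span of $\gamma(s)$ and $\vv$ is not obviously true. The actual mechanism is cleaner. If the lift fails as $t\to s^-$, the support constants blow up, producing $\sigma_t$-\emph{stable} objects $A_t$ with $|Z_t(A_t)|\to 0$; decomposing $v(A_t)$ relative to the positive $2$-plane $P_s$ of $\gamma(s)$ and using $v(A_t)^2\ge -2$ shows the $v(A_t)$ are bounded, hence assume a fixed value $\delta$ along a subsequence. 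Then $(\gamma(s),\delta)=0$, so $\delta\in P_s^\perp$, which is negative definite; thus $\delta^2<0$ and necessarily $\delta^2=-2$, contradicting $\gamma(s)\in\cP_0(X)$. No minimality or limiting filtration is needed.
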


To show that $V(X)$ and $U(X)$ are connected, we need to introduce two more subsets:
\begin{equation*}
\cQ(X):=\{\Omega \in \cP(X) \mid (\Omega,\Omega)=0, (\Omega,\bar{\Omega})>0, r(\Omega)=1\}
\end{equation*}
and
\begin{equation*}
\cL(X):=\{\Omega \in \cK(X) \mid (\Omega,\delta) \not\in \R_{\leq 0} \ \ \forall \delta \in \Delta^{+}(X)\},
\end{equation*}
where $\Delta^{+}(X):=\{\delta \in \Delta(X) \mid r(\delta) >0\}$, and $r:\Halg \rightarrow H^0(X,\Z)\cong \Z$ is the first component.

\begin{Lem}\label{UandV}
We have 
\[V(X) = \{\sigma \in \Stab (X) \mid \cO_{x}\  \mathrm{is \  stable \  of\  phase\  } 1\  \forall x \in X\mathrm{,\  and\  } \pi(\sigma) \in \cQ(X)\}\]
and
\[ U(X)=\{\sigma \in \Stab(X) \mid \cO_{x} \  \mathrm{is\  stable\  } \forall x \in X\mathrm{,\  and\  } \pi(\sigma) \in \cP^{+}_{0}(X)\} \]
\end{Lem}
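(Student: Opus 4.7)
The plan is to prove both characterizations by establishing mutual inclusions, handling the $V(X)$ case first and then deducing the $U(X)$ case by means of the $\widetilde{GL}_{2}^{+}(\R)$-action.

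For the forward inclusion $V(X) \subseteq \{\sigma \mid \pi(\sigma) \in \cQ(X),\ \cO_x \text{ stable of phase } 1\}$, I would start from $\sigma = \sigma_{\omega,B} \in V(X)$. A direct computation gives $\Omega_{Z_{\omega,B}} = \exp(B+i\omega)$, whose rank is $1$, whose self-pairing is $0$, and whose pairing with its complex conjugate equals $2\omega^2 > 0$; hence $\pi(\sigma) \in \cQ(X)$. The skyscraper $\cO_x$ is a torsion sheaf, hence it lies in $\cT^\beta \subset \Coh^{\omega,B}$, and $Z_{\omega,B}(\cO_x) = -1$ gives phase $1$. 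Stability is immediate from simplicity of $\cO_x$ in the tilted heart: any short exact sequence $0 \to A \to \cO_x \to B \to 0$ in $\Coh^{\omega,B}$ yields in cohomology an injection $H^{-1}(A) \hookrightarrow H^{-1}(\cO_x) = 0$, so $A$ is a sheaf and a subsheaf of $\cO_x$, forcing $A = 0$ or $A = \cO_x$.

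For the reverse inclusion, assume $\pi(\sigma) \in \cQ(X)$ and every $\cO_x$ is stable of phase $1$. The conditions $r(\Omega_Z) = 1$, $(\Omega_Z,\Omega_Z) = 0$, $(\Omega_Z,\bar\Omega_Z) > 0$ uniquely write $\Omega_Z = \exp(B+i\omega)$ for some $B \in \NS(X)_\R$ and some $\omega \in \NS(X)_\R$ with $\omega^2 > 0$, identifying the central charge with $Z_{\omega,B}$. Ampleness of $\omega$ is then established via Nakai--Moishezon: for any irreducible curve $C$, the surjection $\cO_C \twoheadrightarrow \cO_x$ for $x \in C$, together with the constraints $\Im Z(\cO_C) \geq 0$ and the phase-$1$ stability of $\cO_x$, yields $\omega \cdot C > 0$. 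The heart $\cA = \cP((0,1])$ is then identified with $\Coh^{\omega,B}$ by comparing Harder--Narasimhan filtrations: the phase-$1$ stability of skyscrapers together with the explicit form of $Z_{\omega,B}$ on $\Coh(X)$ forces every slope-semistable sheaf of slope $> \omega\cdot B/\omega^2$ into $\cP((0,1])$ and those of slope $\leq \omega\cdot B/\omega^2$ into $\cP((0,1])[1]$, matching exactly the tilt. Finally, validity of $\sigma$ forces $Z(E) \neq 0$ on nonzero spherical sheaves and, combined with the phase alignment, rules out $Z(E) \in \R_{\leq 0}$, so $\sigma \in V(X)$.

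For $U(X) = \widetilde{GL}_{2}^{+}(\R) \cdot V(X)$, I would use that this action preserves stability of all objects (shifting only phases) and acts on $\Halg \otimes \R$ by a positive element of $GL_2^+(\R)$ on the real two-plane spanned by $\Real \Omega_Z$ and $\Imm \Omega_Z$. This two-plane, and hence its signature and the nonvanishing of $(\Omega_Z,\delta)$ for spherical classes $\delta$, is invariant, so the $V(X)$ characterization transports to one in terms of $\cP_0^+(X)$ and mere stability (without phase alignment) of $\cO_x$. Conversely, if $\pi(\sigma) \in \cP_0^+(X)$ and each $\cO_x$ is stable, the common Mukai vector $(0,0,1)$ makes the phase of $\cO_x$ a fixed real number $\phi$, and one can act by a unique element of $\widetilde{GL}_2^+(\R)$ that simultaneously rotates $\Omega_Z$ into $\cQ(X)$ and sends $\phi$ to $1$; the previous case then applies.

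The main obstacle is the identification of the heart $\cA$ with the tilt $\Coh^{\omega,B}$ in the reverse direction for $V(X)$: reconstructing a torsion pair from stability information on skyscrapers alone requires a careful simultaneous control of both halves of the tilt via slope-HN filtrations of arbitrary coherent sheaves, rather than assuming the torsion pair at the outset. This is precisely the step where the original argument in \cite{bridgeland_stability_2006} invokes the existence of slope-stable spherical sheaves, and the reformulation of $V(X)$ adopted here is designed to bypass that input.
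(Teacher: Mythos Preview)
Your outline is essentially the same as the paper's approach: both reduce to Bridgeland's proof of \cite[Proposition 10.3]{bridgeland_stability_2006}, the only novelty being that one step is replaced by an explicit hypothesis. The paper simply cites that proposition and says which step to skip; you are sketching its content.

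There is, however, a genuine confusion in your final paragraph. You identify the heart reconstruction $\cA \cong \Coh^{\omega,B}$ as ``precisely the step where the original argument in \cite{bridgeland_stability_2006} invokes the existence of slope-stable spherical sheaves''. This is not correct. In Bridgeland's proof, the heart identification is carried out without any appeal to existence of spherical sheaves; it is a direct argument using the stability of skyscrapers to locate slope-semistable sheaves of various slopes in the slicing. The step that \emph{does} use existence of slope-stable spherical sheaves is the one showing that the central charge lands in $\cP_0^+(X)$, i.e.\ that the real $2$-plane spanned by $\Re\Omega_Z$ and $\Im\Omega_Z$ is positive definite and not orthogonal to any spherical class. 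It is \emph{this} step that the added hypotheses $\pi(\sigma)\in\cQ(X)$ (for $V(X)$) and $\pi(\sigma)\in\cP_0^+(X)$ (for $U(X)$) are designed to bypass, as the paper states.

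Practically, this means your plan will work, but not for the reason you give: when you try to identify the heart, you will find that the condition $\pi(\sigma)\in\cQ(X)$ gives you nothing new there, and you will simply have to carry out Bridgeland's original argument for that step verbatim. Conversely, you never need to worry about proving positive-definiteness or non-orthogonality to spherical classes, because those are now hypotheses.
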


\begin{proof}
Without the condition $\pi(\sigma) \in \cP_0^+(X)$ this follows from the proof of \cite[Proposition $10.3$]{bridgeland_stability_2006}. In the third step of the proof, Bridgeland shows, using existence of slope stable spherical sheaves, that if all the skyscraper sheaves are $\sigma$-stable, then the central charge is in $\cP^+_0(X)$. Since we explicitly ask for the central charge to be in $\cP^+_0(X)$, we can skip this step. The rest of the proof works verbatim and gives the Lemma. 
\end{proof}

\begin{Lem}\label{lem:covering}
The restriction $\pi|_{V(X)}: V(X) \rightarrow \cQ(X)$ has open image and it is an homeomorphism onto its image. 
\end{Lem}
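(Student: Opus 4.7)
The plan is to prove three things: (a) $\pi|_{V(X)}$ is injective, (b) its image is open in $\cQ(X)$, and (c) the map is a homeomorphism onto its image. I would deduce (c) from (a), (b), and the fact that $\pi$ is a local homeomorphism on $\pi^{-1}(\cP_0(X))$ by the previous Proposition.

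For (a), I would use the explicit definition of $V(X)$: every $\sigma\in V(X)$ is of the form $\sigma_{\omega,B}$, with central charge corresponding under the Mukai pairing to $\Omega=\exp(B+i\omega)=(1,B+i\omega,(B+i\omega)^{2}/2)$. Extracting the $\NS(X)\otimes\C$ component of $\Omega$ recovers $B+i\omega$, hence both $\omega$ and $B$; the pair $(\omega,B)$ in turn determines the heart $\Coh^{\omega,B}$. Thus two elements of $V(X)$ with the same image under $\pi$ must be equal.

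For (b), I would invoke Lemma \ref{UandV} to rewrite $V(X)=W\cap\pi^{-1}(\cQ(X))$, where $W\subset\Stab(X)$ is the set of stability conditions for which every skyscraper sheaf $\cO_x$ is stable of phase $1$. The set $W$ is open: by openness of stability in families (axiom~(4) applied to the family of skyscraper sheaves parametrized by $X$), the locus in $X\times\Stab(X)$ where $\cO_x\notin\cP_\sigma(1)$ is closed, and its image in $\Stab(X)$ under the proper projection is closed, so its complement $W$ is open. Hence $V(X)$ is open in $\pi^{-1}(\cQ(X))$. Since $V(X)\subset U(X)\subset\pi^{-1}(\cP_0^+(X))$, we have $\pi(V(X))\subset\cQ(X)\cap\cP_0^+(X)$, and the covering from the previous Proposition restricts to an open map on $\pi^{-1}(\cQ(X)\cap\cP_0^+(X))$. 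Therefore $\pi(V(X))$ is open in $\cQ(X)\cap\cP_0^+(X)$, which is itself open in $\cQ(X)$ because $\cP_0^+(X)$ is open in $\Halg\otimes\C$.

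For (c), $\pi|_{V(X)}$ is a continuous injection with open image, and it is locally a homeomorphism (being the restriction of the local homeomorphism $\pi|_{\pi^{-1}(\cP_0(X))}$); such a map is automatically a homeomorphism onto its image. The main obstacle I anticipate is justifying openness of $W$: one needs the entire family of skyscraper sheaves to remain simultaneously stable for nearby stability conditions, which uses both the openness-in-families axiom and compactness of $X$. Everything else is formal manipulation of the covering structure provided by the previous Proposition.
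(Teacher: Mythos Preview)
Your overall strategy matches the paper's: prove injectivity, then openness of the image, then conclude via the covering property. Part (a) is fine and is exactly what the paper does. Part (c) is also fine once (a) and (b) are established.

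The gap is in part (b). Your justification that $W$ is open misuses axiom~(4). That axiom asserts openness of semistability in the base $T$ of a family for a \emph{fixed} stability condition $\sigma$; it says nothing about how stability varies as $\sigma$ moves in $\Stab(X)$. So knowing that $\{x\in X:\cO_x\in\cP_\sigma(1)\}$ is open in $X$ for each $\sigma$ does not make the bad locus closed in $X\times\Stab(X)$. Moreover, the condition ``stable of phase exactly~$1$'' is not open in $\Stab(X)$: phases vary continuously, so your $W$ as stated is not open there. (Inside $\pi^{-1}(\cQ(X))$ the phase of $\cO_x$ is forced to be an odd integer, which resolves the second issue, but you still need openness of simultaneous stability of \emph{all} skyscraper sheaves as $\sigma$ varies.)

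The paper supplies exactly this missing ingredient by invoking \cite[Proposition~9.4]{bridgeland_stability_2006}, which says the locus in $\Stab(X)$ where some $\cO_x$ is unstable is closed; combined with Lemma~\ref{UandV} this gives that $U(X)$ is open. The paper then restricts the covering $\pi|_{\pi^{-1}(\cP_0(X))}$ to the open set $U(X)$, observes that over $U(X)$ the fibres are exactly the even shifts, and finally notes that restricting further to $\cQ(X)$ picks out a single sheet, namely $V(X)$. If you replace your axiom-(4) argument with the citation to Bridgeland's Proposition~9.4 (or an equivalent compactness/boundedness argument showing that simultaneous stability of all $\cO_x$ is open in $\sigma$), your proof goes through and is essentially the same as the paper's.
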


\begin{proof}
Since every stability condition in $V(X)$ is obtained by tilting, the map $\pi$ is injective when restricted to $V(X)$. So it is enough to show that it remains a covering on $V(X)$. Notice that Lemma \ref{UandV} and \cite[Proposition $9.4$]{bridgeland_stability_2006} imply that $U(X) \subset \Stab(X)$ is an open subset. Since $\pi(U(X)) \subset \cP_0(X)$ by definition and $U(X)$ is open, the map $\pi$ restricted to $U(X)$ is a covering onto its image. Moreover, a stability condition $\sigma \in U(X)$ is determined by its central charge $\pi(\sigma)$ up to even shifts, because the even shifts are the only elements of $\widetilde{GL_2^+}(\R)$ that fix the central charge. Let $A$ be a small neighborhood of $\pi(\sigma)$, since $\pi$ is a cover on $U(X)$, the inverse image $\pi^{-1}(A) \cap U(X)$ is homeomorphic to $A \times \Z$, where the second factors records the shift. Restricting to $\cQ(X) \cap A$ we see that $\pi^{-1}(A \cap \cQ(X)) \cap V(X)$ is contained in one component of $\pi^{-1}(A) \cap U(X)$, so $\pi|_{V(X)}$ induces an homeomorphism onto its image. 
\end{proof}

It is easy to see that the pairing $(-,\delta)|_{\cK(X)}: \cK(X) \rightarrow \C$ with any class $\delta$ is submersive when restricted to $\cK(X)$. In particular the preimage of a real half-line is a locally closed submanifold of real codimension one. These submanifolds are contained in real hyperplanes of $\Halg \otimes \C$. Bridgeland shows that the union of these hyperplanes is locally finite, and it uses this to show the following.

\begin{Lem}[{{\cite[Lemma $11.1$]{bridgeland_stability_2006}}}]\label{Lcontractible}
The set $\cL(X) \subset \cQ(X)$ is open and contractible. 
\end{Lem}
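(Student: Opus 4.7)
The plan is to exploit the explicit parameterization $\cK(X) \cong \NS(X)_{\R} \times \Amp(X)$ via the homeomorphism $(B, \omega) \mapsto \exp(B + i\omega)$. Since $\NS(X)_{\R}$ and the ample cone $\Amp(X)$ are both convex, this already exhibits $\cK(X)$ as contractible. I need to study the walls $W_\delta := \{\Omega \in \cK(X) \mid (\Omega, \delta) \in \R_{\leq 0}\}$ and show that their complement $\cL(X)$ retains both openness and contractibility.

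For openness I would compute, for $\delta = (r, D, s) \in \Delta^+(X)$ with $r \geq 1$ and $D^2 = 2rs - 2$, the identity
\[
(\exp(B+i\omega), \delta) = \tfrac{r}{2}\Bigl(\omega^2 - \bigl(B - \tfrac{D}{r}\bigr)^2 - \tfrac{2}{r^2}\Bigr) + i\,\omega \cdot (D - rB).
\]
Hence $W_\delta$ is the closed subset of $\cK(X)$ cut out by the two conditions $\omega \cdot (rB - D) = 0$ and $\omega^2 \leq (B - D/r)^2 + 2/r^2$. In particular $W_\delta$ sits inside the real hyperplane $\{\omega \cdot (rB - D) = 0\}$ and is bounded in the $\omega$-direction by an inequality that depends only on $B$ and $\delta$. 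Combined with the local finiteness of the family $\{W_\delta\}_{\delta \in \Delta^+(X)}$ that was established just before the lemma, this shows that $\bigcup_\delta W_\delta$ is closed in $\cK(X)$, so $\cL(X)$ is open.

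For contractibility the key observation is that the rescaling $\varphi_t \colon (B, \omega) \mapsto (B, t\omega)$ preserves $\cL(X)$ for every $t \geq 1$: if $(B, \omega) \notin W_\delta$, either $\omega \cdot (rB - D) \neq 0$ (a condition unchanged by rescaling $\omega$), or else $\omega \cdot (rB - D) = 0$ and $\omega^2 > (B - D/r)^2 + 2/r^2$, an inequality strengthened by $\omega \mapsto t\omega$ with $t \geq 1$. This gives a homotopy inside $\cL(X)$ that pushes every point into the ``large volume'' region. I would then concatenate this with a straight-line contraction of $B$ to a fixed base class $B_0$ and of $\omega$ to a fixed ample class $\omega_0$; the point is that along any compact portion of such a path only finitely many walls $W_\delta$ are active (by local finiteness), and each of these is bounded in $\omega$, so one may first choose $t$ large enough that the entire remaining path lies above the relevant bounds and thus inside $\cL(X)$. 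Composing the three homotopies produces a deformation retract of $\cL(X)$ onto a point.

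The main obstacle is making the second stage of the deformation precise: as $B$ varies along the straight-line homotopy the set of walls $W_\delta$ one must avoid can change, so one has to quantify ``large enough $t$'' uniformly on the path. The two tools for this are the explicit upper bound $\omega^2 \leq (B-D/r)^2 + 2/r^2$ on each wall (which gives a priori bounds once $B$ is confined to a compact set) and local finiteness (which ensures that only finitely many walls are ever relevant on that compact set). Together these let one choose a single large $t$ that works throughout, finishing the contractibility.
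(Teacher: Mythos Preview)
The paper does not supply its own proof of this lemma; it simply invokes \cite[Lemma~11.1]{bridgeland_stability_2006}. Your proposal follows the same overall strategy as Bridgeland's original argument (parameterize $\cK(X)$ by $(B,\omega)$, compute the walls, use local finiteness for openness, and scale $\omega$ upward for contractibility), and the openness part is fine.

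For contractibility, however, you are missing the one observation that makes the argument clean, and without it your ``choose $t$ large enough'' step has a genuine gap: you never explain how to choose $t$ \emph{continuously} in the starting point $(B,\omega)$, which is required for a deformation retract. Your proposed fix via local finiteness does not obviously give this, because the finite set of relevant walls changes as $(B,\omega)$ varies. The missing observation is the Hodge index theorem: on the locus where the imaginary part $\omega\cdot(D-rB)$ vanishes, the class $D-rB$ is orthogonal to the ample class $\omega$, hence $(B-D/r)^2=\tfrac{1}{r^2}(D-rB)^2\le 0$. Your own formula then gives the \emph{uniform} bound $\omega^2\le 2/r^2\le 2$ on every wall $W_\delta$. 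Consequently the entire region $\{(B,\omega):\omega^2>2\}$ lies inside $\cL(X)$; it is moreover convex (if $\omega_1,\omega_2$ are ample with $\omega_i^2>2$, the reverse Cauchy--Schwarz inequality $\omega_1\cdot\omega_2\ge\sqrt{\omega_1^2\omega_2^2}$ in the positive cone gives $((1-s)\omega_1+s\omega_2)^2>2$). Now the contraction is immediate: first scale $(B,\omega)\mapsto(B,t\omega)$ with $t$ running from $1$ to, say, $\max(1,\sqrt{3/\omega^2})$ (a continuous function of $\omega$), landing in the convex region $\{\omega^2>2\}\subset\cL(X)$; then contract linearly to a fixed base point with $\omega_0^2>2$. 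This is exactly Bridgeland's argument, and it replaces your local--finiteness bookkeeping with a single uniform estimate.
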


The proof of the following Proposition is the same as \cite[Proposition $11.2$]{bridgeland_stability_2006} with an extra step, but we reproduce the entire proof for readability. 

\begin{Prop}
The spaces $V(X)$ and $U(X)$ are connected.
\end{Prop}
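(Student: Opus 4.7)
The plan is to follow the argument of \cite[Proposition~11.2]{bridgeland_stability_2006} almost verbatim, adding one step that replaces the classical use of existence of slope-stable spherical sheaves. Since $U(X)$ is the continuous image of the connected set $\widetilde{GL_{2}}^{+}(\R)\times V(X)$ under the action map, it suffices to prove $V(X)$ is connected; by Lemma \ref{lem:covering} this reduces to connectedness of $\pi(V(X))\subseteq\cQ(X)$.

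First I would observe that $\cL(X)\subseteq\pi(V(X))$: the condition defining $\cL(X)$ -- avoidance of $\R_{\leq 0}$ against every $\delta\in\Delta^{+}(X)$ -- is strictly stronger than the one defining $\pi(V(X))$, which imposes it only on the Mukai vectors of (torsion-free) spherical sheaves. Since $\cL(X)$ is contractible by Lemma \ref{Lcontractible}, it is then enough to produce, for every $\Omega=\exp(B+i\omega)\in\pi(V(X))$, a path in $\pi(V(X))$ joining $\Omega$ to some point of $\cL(X)$.

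The extra step is to construct such a path by the large-volume scaling $\Omega_{t}:=\exp(B+it\omega)$, $t\in[1,T]$. A direct expansion of the Mukai pairing shows that, for $\delta=(r,D,s)\in\Halg$ with $r\geq 1$ and $\delta^{2}=-2$, the imaginary part of $Z_{t\omega,B}(\delta)$ is proportional to $t\,\omega\cdot(D-rB)$, and when this vanishes the real part is strictly monotone in $t^{2}$. The monotonicity propagates the hypothesis $\Omega_{1}\in\pi(V(X))$ to all $t\geq 1$, so the path stays inside $\pi(V(X))$. For the endpoint, substituting $s=(D^{2}+2)/(2r)$ and applying the Hodge index theorem to $D-rB\in\omega^{\perp}\subset\NS(X)_{\R}$ (so that $(D-rB)^{2}\leq 0$) yields the uniform bound
\[
t^{2}\leq\frac{(D-rB)^{2}+2}{r^{2}\omega^{2}}\leq\frac{2}{\omega^{2}},
\]
valid for every $\delta\in\Delta^{+}(X)$. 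Choosing any $T>\sqrt{2/\omega^{2}}$ therefore places $\Omega_{T}$ in $\cL(X)$ and delivers the required path.

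The main technical point -- and the place where the argument departs from Bridgeland's -- is precisely this uniform bound in $t$, independent of $\delta$: it replaces the input that every class in $\Delta^{+}(X)$ is realized by a slope-stable spherical sheaf, and it is the step where I would expect to have to be most careful in writing out the details. Once it is in hand, the rest of the proof is essentially a cosmetic rearrangement of Bridgeland's original argument.
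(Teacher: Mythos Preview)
Your proof is correct and takes a genuinely different route from the paper's, so it is worth spelling out the contrast.

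For the inclusion $\cL(X)\subseteq\pi(V(X))$, the paper runs an open--closed argument in $\cL(X)$, the closedness step being a Jordan--H\"older analysis of a strictly semistable skyscraper sheaf on the boundary. Your direct observation that the defining condition of $\cL(X)$ is formally stronger than that of $\pi(V(X))$ is simpler and suffices here; the only point not made explicit in your write-up is the rank-zero case, but this is automatic since for $\delta=(0,D,s)\in\Delta(X)$ either $D$ or $-D$ is effective by Riemann--Roch, whence $\omega\cdot D\neq 0$ for any ample $\omega$, so $\Im(\Omega,\delta)\neq 0$. (This same remark also shows $\Omega_T\in\cP_0(X)$, hence genuinely in $\cK(X)$, which you need for the endpoint to lie in $\cL(X)$.)

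For the connectedness itself the two arguments diverge. The paper shows that $\pi(V(X))\smallsetminus\cL(X)$ lies in a locally finite union of real-codimension-one submanifolds, so $\cL(X)$ is dense in $\pi(V(X))$ and connectedness follows from that of $\cL(X)$. You instead produce, for each $\Omega\in\pi(V(X))$, an explicit path $\Omega_t=\exp(B+it\omega)$ staying in $\pi(V(X))$ and reaching $\cL(X)$ once $t^2>2/\omega^2$; the key new input is the uniform Hodge-index bound $(D-rB)^2\leq 0$, which makes the threshold for $t$ independent of $\delta$. This is more elementary and constructive than the density argument, and it sidesteps the locally-finite-hyperplane machinery entirely; the paper's approach, on the other hand, stays closer to Bridgeland's original structure and makes the geometry of the complement $\pi(V(X))\smallsetminus\cL(X)$ transparent.
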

\begin{proof}
First we claim that
\[ \cL(X) \subseteq \pi(V(X)). \] 
Lemma \ref{lem:covering} implies that $\pi(V(X))$ is an open subset of $Q(X)$ and $\pi|_{V(X)}:V(X) \rightarrow \pi(V(X))$ is an homemorphism. From Lemma \ref{Lcontractible} we get that $\cL(X) \cap \pi(V(X))$ is open in $\cL(X)$. Since $\cL(X)$ is connected, we only need to show that the intersection is closed in $\cL(X)$. 

Let $\Omega \in \overline{\cL(X) \cap \pi(V(X))} \subset \cL(X)$. Since 
$\pi$ is an homeomorphism restricted to $V(X)$, 
there exists a stability condition $\sigma \in \overline{V(X)}$ such that $\pi(\sigma)=\Omega$. If $\sigma$ is not in $V(X)$, Lemma \ref{UandV} implies that there is a strictly semistable skyscraper sheaf $\cO_x$; consider its Jordan-Holder factors $A_i$. From the definition of the category $\Coh^{\omega,B}(X)$, and the fact that $\omega$ is ample, it follows that if $\Im Z(A_i)=0$ and $r(A_i)=0$, then $\vv(A_i)$ is a multiple of $\vv(\cO_x)$. This implies that there is a Jordan-Holder factor $A$ with positive rank. Since $A$ has the same phase as $\cO_x$ we have $Z(A) \in \R_{<0}$, and we claim that $A$ is spherical. Recall that 
\[ \Re Z(A)=\frac{1}{2r}((\Delta^2 -2rs)+r^2\omega^2-(\Delta-rB)^2),\]
where $\vv(A)=(r,\Delta,s)$.
We have $\Im Z(A)=(\Delta-rB).\omega=0$, which implies $(\Delta-rB)^2 \leq 0$  by the Hodge index Theorem. Hence $\Delta^2 -2rs <0$, which means $A$ spherical, because it is stable. We found a spherical class $\delta:=\vv(A) \in \Delta^+(X)$ such that $(\Omega,\delta) < 0$, which contradicts the assumption that $\Omega \in \cL(X)$. 

To finish the proof, it is enough to show that $V(X)$ is connected. Since $\pi$ is a homeomorphism restricted to $V(X)$, we need to show that $\pi(V(X))$ is connected. Since $\cL(X)$ is connected, it is enough to show that $\cL(X)$ is dense in $\pi(V(X))$. If we assume non-emptiness of moduli stable of slope stable spherical sheaves, we have equality, as showed in \cite[Proposition $11.2$]{bridgeland_stability_2006}. If we do not have the non-emptiness, it could happen that for an $\Omega \in \pi(V(X))$ we have $(\Omega,\vv(E)) \not \in \R_{\leq 0}$ for every spherical torsion-free sheaf $E$, but $(\Omega, \delta) \in \R_{\leq 0}$ for some spherical class $\delta$ for which there are no corresponding sheaves. That is, the difference $\pi(V(X))-\cL(X)$ is contained in a locally finite union of locally closed submanifolds of real codimension one. Hence $\cL(X)$ is dense in $\pi(V(X))$ and $V(X)$ is connected because $\cL(X)$ is.
\end{proof}

\begin{Defi}
Define the distinguished component $\Stab^{\dagger}(X)$ as the connected component of the preimage $\pi^{-1}(\cP^+_0(X)) \subset \Stab(X)$ containing $U(X)$.
\end{Defi}

\begin{Rem}
As mentioned above, our definition differs from Bridgeland's original definition in \cite{bridgeland_stability_2006}. In ibidem it is defined as the connected component of $\Stab(X)$ containing the geometric chamber $U(X)$, and it is a Theorem that it gets mapped onto $\cP_{0}^+(X)$ via $\pi$. The proof requires existence of slope stable sheaves. In any case, our definition is sufficient to prove existence of slope stable shaves, because $\Stab^{\dagger}(X)$ contains the Gieseker chamber. So, once we prove the Main Theorem with our definition, it will also follow the Main Theorem for the standard definition of $\Stab^{\dagger}(X)$. 
\end{Rem}

\subsection{Equivalences preserving $\Stab^{\dagger}(X)$.}\label{subsec:preserveStab}
To conclude this section we want to show that there are enough equivalences between derived categories of K3 surfaces preserving $\Stab^{\dagger}(X)$. Every result here has already been shown by Hartmann in the appendix of \cite{hartmann_cusps_2012}. The idea is simple: since $\Stab^{\dagger}(X)$ is connected and contains the geometric chamber $U(X)$ it is enough to find a point $\sigma \in  \Stab^{\dagger}(X)$ that goes to the geometric chamber. This is easy to check thanks to the explicit description of $U(X)$ in Lemma \ref{UandV}. In our argument, there is the extra check that the equivalences preserve the domain $\cP_0^+(X)$, which (in the generality needed in this paper) is due to Huybrechts and Stellari \cite{Huybrechts_2005}. 

We start by recalling some generalities about Fourier-Mukai equivalences between K3 surfaces. Let $H$ be a polarization on $X$, let $\ww \in \Halg$ be a Mukai vector, and consider the moduli space $M:=M_H(\ww)$ of Gieseker stable sheaves of class $\ww$. Assume that it is a smooth projective surface, and that it is fine, i.e. it has a universal family $\cE \in \Coh(X \times M)$. We can consider the Fourier-Mukai transform with kernel the universal family:
\begin{align*}
    \Phi_{\cE}:D^b(M) &\rightarrow D^b(X) \\
    F &\mapsto q_*(\cE \otimes p^*F),
\end{align*}
where we denoted by $p,q$ the projections from $X \times M$ on the first and second factor, and where every functor is derived. 

\begin{Prop}[{{\cite{Muk:K3},\cite[Proposition $10.25$]{Huybrechts_FM}}}]\label{prop:FMequivalence}
If $M$ a smooth projective surface, and it is a fine moduli space, then the functor $\Phi_{\cE}$ is an equivalence. 
\end{Prop}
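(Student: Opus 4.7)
The plan is to apply Bridgeland's criterion for fully faithfulness on a spanning class, using the skyscraper sheaves $\{k(m)\}_{m\in M}$, and then promote full faithfulness to an equivalence using the triviality of the canonical bundle.

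First, I would set up the adjoints. Since $M$ and $X$ are smooth projective of dimension $2$, the functor $\Phi_\cE$ admits left and right adjoints given by Fourier--Mukai transforms with kernels $\cE^\vee \otimes q^*\omega_X[2]$ and $\cE^\vee \otimes p^*\omega_M[2]$ respectively; as $\omega_X = \cO_X$ and $\omega_M = \cO_M$ these collapse to the same kernel and in particular the two adjoints coincide. Next I would observe that the skyscraper sheaves $\{k(m)\}_{m\in M}$ form a spanning class of $D^b(M)$, so by Bridgeland's criterion it suffices to verify, for all $m, m' \in M$ and all $i \in \Z$, the identities
\[
\Hom_{D^b(X)}^i(\Phi_\cE(k(m)), \Phi_\cE(k(m'))) \;=\; \Hom_{D^b(M)}^i(k(m), k(m')).
\]

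The key step is then to check these equalities using stability. For $m \in M$, $\Phi_\cE(k(m)) \cong \cE_m$ is a Gieseker-stable sheaf of Mukai vector $\ww$. Gieseker stability implies $\Hom(\cE_m, \cE_{m'}) = 0$ for $m \neq m'$ (non-isomorphic stable sheaves of the same reduced Hilbert polynomial) and $\Hom(\cE_m, \cE_m) = \C$; Serre duality (using $\omega_X = \cO_X$) gives the vanishing in degree $2$ automatically from the vanishing in degree $0$. For the degree-$1$ piece, the deformation-theoretic identification $\Ext^1_X(\cE_m, \cE_m) \cong T_m M$ combined with the fact that $M$ is a smooth surface yields a two-dimensional space, matching $\Ext^1_M(k(m), k(m))$. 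Finally, for $m \neq m'$ the vanishings in degrees $0$ and $2$ together with Euler characteristic $\chi(\cE_m, \cE_{m'}) = -(\ww, \ww) = 0$ (since $M$ has dimension $2 = \ww^2+2$, forcing $\ww^2 = 0$) force the degree-$1$ vanishing as well, matching the right-hand side. Putting these together gives exactly the required identities.

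By Bridgeland's criterion, $\Phi_\cE$ is fully faithful. To upgrade to an equivalence, I would invoke the standard fact that a fully faithful exact functor between derived categories of smooth projective varieties with the same trivial canonical bundle is automatically an equivalence: the left adjoint $\Phi_L$ satisfies $\Phi_L \Phi_\cE \cong \Id_{D^b(M)}$, and by the Serre-functor compatibility (both Serre functors are just $[2]$) the image of $\Phi_\cE$ is a Serre-stable admissible subcategory of $D^b(X)$; but any nontrivial admissible triangulated subcategory of $D^b(X)$ stable under the shift Serre functor must be all of $D^b(X)$ since the orthogonal would be a semi-orthogonal factor forcing the trivial canonical bundle to decompose, a contradiction.

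The main obstacle is the verification that the orthogonality conditions follow from stability alone; in particular checking that $\Phi_\cE(k(m))$ is truly a \emph{sheaf} (not a complex) of the expected type, and that the identification $\Ext^1_X(\cE_m,\cE_m) \cong T_m M$ is canonical enough to give the desired isomorphism (not merely an equality of dimensions). This requires the existence of the universal family $\cE$ as a genuine sheaf and the smoothness of $M$, both of which are part of the hypotheses.
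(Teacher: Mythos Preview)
The paper does not give its own proof of this proposition; it is quoted from the literature with references to Mukai and to Huybrechts' book. Your argument is the standard one found in the cited reference \cite[Proposition~10.25]{Huybrechts_FM}: verify the Bondal--Orlov/Bridgeland criterion on the spanning class of skyscraper sheaves using stability of the $\cE_m$, then upgrade to an equivalence via the triviality of the canonical bundle. So there is nothing to compare against, and your approach is exactly the intended one.

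One small point: your claim that the left and right adjoints literally coincide uses $\omega_M=\cO_M$, which is not part of the stated hypotheses (though it is true a posteriori, and also follows from Mukai's symplectic form on moduli). For the purpose of the equivalence upgrade you only need $\omega_X=\cO_X$ and $\dim M=\dim X$: then $\Phi_L \cong \Phi_R \circ (-\otimes\omega_M)$, so $\Phi_L(C)=0 \Leftrightarrow \Phi_R(C)=0$, and Bridgeland's criterion applies. This avoids any circularity. Everything else in your outline is correct, including the Euler-characteristic computation forcing $\Ext^1(\cE_m,\cE_{m'})=0$ for $m\neq m'$ from $\ww^2=0$.
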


Recall that if $X$ is a K3 surface the structure sheaf $\cO_X$ is a spherical object. In particular we can consider the spherical twist $\ST_{\cO_X}$ around $\cO_X$ defined as follows.

\begin{Defi}
Let $S \in D^b(X)$ be a spherical object. The \textit{spherical twist} around $S$, denoted by $\ST_S(-)$ is defined, for every $E \in D^b(X)$, as the cone of the evaluation map:
\[ \RHom(S,E) \otimes S \rightarrow E \rightarrow \ST_S(E) \rightarrow \RHom(S,E) \otimes S[1]. \]
\end{Defi} 

\begin{Prop}[{{\cite[Proposition $2.10$]{seidel_2001},\cite[Proposition $8.6$]{Huybrechts_FM}}}]
The spherical twist $\ST_S$ around a spherical object $S$ is an equivalence.
\end{Prop}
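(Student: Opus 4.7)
The plan is to realize $\ST_S$ as a Fourier--Mukai transform and exhibit a two-sided inverse at the level of kernels. On $X \times X$, the evaluation morphism $S^\vee \boxtimes S \to \cO_\Delta$ has a cone $\cP_S$, and applying the projection formula to the defining triangle identifies the functor $\Phi_{\cP_S}$ with $\ST_S$: explicitly, $p_{2*}(p_1^*E \otimes (S^\vee \boxtimes S)) \simeq \RHom(S,E) \otimes S$, so taking cones recovers $\ST_S(E)$. This translates the equivalence claim into an identity $\cP_S \circ \cP_S' \simeq \cO_\Delta$ in $D^b(X \times X)$ for a suitable dual kernel $\cP_S'$.

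Next I would construct the candidate quasi-inverse $\ST_S'$ as the Fourier--Mukai transform whose kernel $\cP_S'$ is the shifted cone of the dual morphism $\cO_\Delta \to S \boxtimes S^\vee[2]$; here I exploit the fact that $X$ is Calabi--Yau of dimension two, so $\omega_X \simeq \cO_X$ and Serre duality takes a particularly clean form. Informally, $\ST_S'$ is the functor fitting in the triangle $\ST_S'(E) \to E \to \RHom(E,S)^\vee \otimes S[2]$.

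To verify $\cP_S \circ \cP_S' \simeq \cO_\Delta$, I would expand both kernels with their defining triangles and use the octahedral axiom to compute the convolution as a $2 \times 2$ array of cones. The diagonal pieces contribute a copy of $\cO_\Delta$ together with extra copies of $S^\vee \boxtimes S$, while the cross term $(S^\vee \boxtimes S) \circ (S \boxtimes S^\vee[2])$ equals, by the K\"unneth/projection formula, $S^\vee \boxtimes S$ tensored with $\RHom(S,S)[2] \simeq \C[2] \oplus \C$. The sphericality of $S$ is precisely what makes these $S^\vee \boxtimes S$ contributions cancel, leaving $\cO_\Delta$; the symmetric argument gives $\cP_S' \circ \cP_S \simeq \cO_\Delta$, so $\ST_S$ is an equivalence with inverse $\ST_S'$.

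The main obstacle is the octahedral bookkeeping: the iterated cones must be tracked with correct signs and the sphericality identity applied at precisely the right step. A cleaner alternative, which I would use in practice, is the Bondal--Orlov fully-faithful criterion applied to the spanning class $\{S\} \cup S^\perp$, where $S^\perp := \{E \in D^b(X) \mid \RHom(S,E) = 0\}$. One checks directly from the defining triangle that $\ST_S(S) \simeq S[-1]$ and $\ST_S(E) \simeq E$ for $E \in S^\perp$, and verifies that $\Hom$-spaces between objects of this spanning class and their shifts are preserved (using Serre duality and the sphericality $\RHom(S,S) \simeq \C \oplus \C[-2]$). This yields full faithfulness, and since $D^b(X)$ admits a Serre functor with which $\ST_S$ trivially commutes (as $\omega_X \simeq \cO_X$), essential surjectivity follows automatically.
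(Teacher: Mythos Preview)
The paper does not supply its own proof of this proposition; it is stated with citations to Seidel--Thomas and Huybrechts and used as a black box. Your proposal reconstructs precisely the arguments in those references: the kernel computation with the octahedral axiom is the Seidel--Thomas proof, and the spanning-class alternative is exactly Huybrechts' Proposition~8.6. Both outlines are correct; the only small omission in the spanning-class route is that you should say why $\{S\}\cup S^{\perp}$ is a spanning class (if $\RHom(S,E)=0$ then $E\in S^{\perp}$, and then $\Hom(E,E)\neq 0$ detects $E\neq 0$), but this is immediate.
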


Every Fourier-Mukai equivalence $\Phi_{\cE}: D^b(X) \xrightarrow{\sim} D^b(X')$ between derived categories of K3 surfaces induces a map in integral cohomology. It is the cohomological Fourier-Mukai transform with kernel the Mukai vector $v(\cP)$:
\[ \Phi_{\cE}^H(x)=q_*(v(\cP).p^*(x)), \]
where $p$ and $q$ are the projections $X \times X'$ on the first and second factor. It is well known that it is a Hodge isometry, if we equip the integral cohomology with the following weight two Hodge structure :
\[ H^*(X,\C)=H^{2,0}(X) \oplus (H^0(X,\C) \oplus H^{1,1}(X) \oplus H^{4}(X)) \oplus H^{0,2}(X).\]
In particular it induces an isometry $\Phi_{\cE}^H:\Halg \xrightarrow{\sim} H^*_{\textrm{alg}}(X',\Z)$ between the algebraic parts. Every derived equivalence $\Phi: D^b(X) \xrightarrow{\sim} D^b(X')$ is of Fourier-Mukai type, and the induced isometry does not depend on the kernel. If the kernel is not given, we will denote the induced isometry by $\Phi^H$. 

The equivalences we are interested in are the following:
\begin{enumerate}
    \item Tensor product by a line bundle: $E \mapsto E \otimes L$;
    \item Shift: $E \mapsto E[1]$
    \item The spherical twist $\ST_{\cO_X}$ around $\cO_X$;
    \item Fourier-Mukai transforms $\Phi_{\cE}$ associated to a fine two dimensional moduli space of Gieseker stable sheaves. 
\end{enumerate}

\begin{Prop}[{{\cite[Remark $5.4$ and Proposition $5.5$]{Huybrechts_2005}}}]\label{prop:orientation}
The isometries induced by the equivalences of type $(1)-(4)$ preserve the set $\cP_0^+(X)$. 
\end{Prop}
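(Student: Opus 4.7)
The plan is to reduce the statement in every case to an orientation check on positive definite real $2$-planes. Since each induced map $\Phi^H$ is a Hodge isometry of the Mukai pairing, it sends $\cP(X)$ to $\cP(X')$ and permutes spherical classes, hence sends $\cP_0(X)$ to $\cP_0(X')$. The two connected components of $\cP_0$ are distinguished by the orientation class of $\mathrm{span}_{\R}(\Re\Omega,\Im\Omega)$, so it suffices for each equivalence to exhibit a single $\Omega \in \cK(X)\subset\cP_0^+(X)$ whose image $\Phi^H(\Omega)$ can be connected by a path in $\cP_0(X')$ to some element of $\cK(X')$.

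Cases (1)--(3) are handled by explicit test elements. For (1), tensoring by $L\in\Pic(X)$ acts on cohomology as multiplication by $\exp(c_1(L))$, which sends $\cK(X)$ into itself. For (2), the shift $[1]$ acts as $-\Id$; both $\Re\Omega$ and $\Im\Omega$ acquire a sign, but since $\det(-I_2)=+1$, the oriented $2$-plane they span is unchanged, so $-\Omega\in\cP_0^+(X)$ whenever $\Omega$ is. For (3), $\ST_{\cO_X}^H$ is the Mukai-lattice reflection $r_\delta(x)=x+(x,\delta)\delta$ with $\delta=v(\cO_X)=(1,0,1)$, and a direct computation yields $r_\delta(\exp(i\omega))=(\tfrac12\omega^2,\,i\omega,\,-1)$; the real parts $(1,0,-\tfrac12\omega^2)$ and $(\tfrac12\omega^2,0,-1)$ lie in the same component of the positive cone of the hyperbolic sublattice spanned by $(1,0,0)$ and $(0,0,1)$ (both have positive first coordinate), so moving the real part continuously from one to the other while keeping the imaginary part $(0,\omega,0)$ fixed exhibits a path of oriented positive $2$-planes, showing $r_\delta(\exp(i\omega))\in\cP_0^+(X)$.

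The main obstacle is case (4), the Fourier-Mukai transform $\Phi_{\cE}$ attached to a fine two-dimensional moduli space of Gieseker-stable sheaves, where no elementary test element is manifestly available. Here one invokes the theorem of Huybrechts and Stellari \cite{Huybrechts_2005}, which asserts that every Fourier-Mukai equivalence between derived categories of K3 surfaces preserves the natural orientation of the positive $4$-plane in $H^*(X,\R)$ spanned by $\Re\sigma,\Im\sigma$ and two independent classes in $H^0(X,\R)\oplus H^{1,1}(X,\R)\oplus H^4(X,\R)$ coming from a K\"ahler class $\omega$ on $X$, for $\sigma$ a holomorphic symplectic form. Restricting this orientation to the algebraic part $\Halg\otimes\R$ and projecting onto the positive $2$-plane spanned by $(\Re\Omega,\Im\Omega)$ recovers precisely the orientation class that distinguishes $\cP_0^+$ from its opposite component, so the preservation of $\cP_0^+$ follows. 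The substance of the proposition is thus absorbed into the Huybrechts-Stellari theorem, with only cases (1)--(3) needing independent verification.
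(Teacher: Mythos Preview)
The paper does not supply a proof of this proposition; it simply records the citation to \cite{Huybrechts_2005} and moves on. Your write-up is therefore not so much a different route as an expansion of what the reference contains, and it is essentially correct.

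Your treatment of cases (1)--(3) is fine: the explicit test elements work, and in case (3) the key point that $(p_t,q)=0$ along the path (so that positivity of the $2$-plane reduces to $p_t^2>0$) is implicit but valid. One small caution about case (4): you phrase the Huybrechts--Stellari result as applying to \emph{every} Fourier--Mukai equivalence of K3 surfaces. That general statement was only established later (by Huybrechts--Macr\`i--Stellari); what \cite[Proposition~5.5]{Huybrechts_2005} actually proves is orientation-preservation when the Fourier--Mukai kernel is a coherent sheaf, which is exactly the situation of a universal family over a fine moduli space. So your invocation is legitimate for the equivalences of type~(4) considered here, but the blanket claim should be weakened to match the cited source. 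With that adjustment, your argument and the paper's (i.e., the argument in the reference it cites) coincide.
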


An equivalence $\Phi: D^{b}(X) \xrightarrow{\sim} D^{b}(X')$ induces an isomorphism of spaces of stability conditions
\begin{align*}
    \Phi_{*}:\Stab(X) &\xrightarrow{\sim} \Stab(X')\\
    (Z,\cP) &\mapsto (Z\circ \Phi^{-1},\cP')
\end{align*}
where $\cP'(t)=\Phi(\cP(t))$. We say that $\Phi$ preserves the distinguished component if \[\Phi_*(\Stab^{\dagger}(X))=\Stab^{\dagger}(X').\]

\begin{Lem}\label{preservation_condition}
Let $\Phi: D^{b}(X) \xrightarrow{\sim} D^{b}(X')$ be a derived equivalence of K3 surfaces of type $(1)-(4)$. Assume that there exists $\sigma'=(Z',\cP') \in \Stab^{\dagger}(X')$ such that the objects $\Phi(\cO_{x})$ are $\sigma'$-stable and such that $\Omega_{Z'} \in \cP_{0}^+(X').$ Then $\Phi$ preserves the distinguished component.
\end{Lem}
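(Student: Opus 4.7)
The plan is to exploit the connectedness definition of $\Stab^\dagger$ together with the explicit characterization of $U(X)$ from Lemma \ref{UandV}. Concretely, I will produce a single point $\tau \in U(X) \subset \Stab^\dagger(X)$ whose image under $\Phi_*$ lies in $\Stab^\dagger(X')$, and then conclude by a connected component argument.

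First, I would observe that by Proposition \ref{prop:orientation}, the cohomological transform $\Phi^H$ preserves $\cP_0^+$. Since the central charge of $\Phi_*(Z,\cP) = (Z\circ\Phi^{-1},\cP')$ corresponds to $(\Phi^H)(\Omega_Z)$, this implies that $\Phi_*$ restricts to a homeomorphism
\[
\Phi_* : \pi^{-1}(\cP_0^+(X)) \xrightarrow{\sim} \pi^{-1}(\cP_0^+(X')).
\]
In particular $\Phi_*(\Stab^\dagger(X))$ is an open and closed connected subset of $\pi^{-1}(\cP_0^+(X'))$, hence equal to a single connected component of that preimage.

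Next, I would set $\tau := \Phi_*^{-1}(\sigma')$ and check that $\tau \in U(X)$. By construction the heart and slicing of $\tau$ are obtained by applying $\Phi^{-1}$ to those of $\sigma'$, so $\cO_x$ is $\tau$-stable if and only if $\Phi(\cO_x)$ is $\sigma'$-stable, which holds by hypothesis for every $x \in X$. Moreover $\Omega_{\pi(\tau)} = (\Phi^H)^{-1}(\Omega_{Z'})$, and again by Proposition \ref{prop:orientation} this lies in $\cP_0^+(X)$. Lemma \ref{UandV} then gives $\tau \in U(X) \subset \Stab^\dagger(X)$.

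Finally, I would combine the two steps. We have $\sigma' = \Phi_*(\tau) \in \Phi_*(\Stab^\dagger(X))$, and by assumption also $\sigma' \in \Stab^\dagger(X')$. Since both $\Phi_*(\Stab^\dagger(X))$ and $\Stab^\dagger(X')$ are connected components of $\pi^{-1}(\cP_0^+(X'))$ meeting at $\sigma'$, they must coincide, proving $\Phi_*(\Stab^\dagger(X)) = \Stab^\dagger(X')$. The only nontrivial input is Proposition \ref{prop:orientation} (orientation preservation of $\Phi^H$ on $\cP_0^+$); everything else is formal from Lemma \ref{UandV} and the definition of the distinguished component as a connected component.
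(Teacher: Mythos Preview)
Your proof is correct and follows essentially the same approach as the paper: define $\tau=\Phi_*^{-1}(\sigma')$, use Proposition~\ref{prop:orientation} and Lemma~\ref{UandV} to place $\tau\in U(X)\subset\Stab^\dagger(X)$, and conclude by a connected-component argument. The only difference is that the paper compresses the last step into the single sentence ``it is enough to show that a point of $\Stab^\dagger(X)$ gets mapped to $\Stab^\dagger(X')$'', whereas you spell out explicitly that $\Phi_*$ restricts to a homeomorphism $\pi^{-1}(\cP_0^+(X))\to\pi^{-1}(\cP_0^+(X'))$ and hence permutes connected components.
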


\begin{proof}
It is enough to show that a point of $\Stab^{\dagger}(X)$ gets mapped to $\Stab^{\dagger}(X')$. Consider $(Z,\cP)=\sigma:=\Phi_*^{-1}(\sigma')$. By assumption the skyscraper sheaves $\cO_{x}$ are all $\sigma$-stable. Proposition \ref{prop:orientation} shows that the induced isometry in cohomology sends $\cP^{+}_{0}(X)$ to $\cP^+_0(X')$. In particular $\Omega_{Z}=(\Phi^H)^{-1}(\Omega_{Z'})$ is in $\cP_{0}^{+}(X)$. Then, Lemma \ref{UandV} implies $\sigma \in U(X) \subset \Stab^{\dagger}(X)$.
\end{proof}

In order to show that the equivalences we are interested in preserve the distinguished component, we need a standard result about the large volume limit. Let $H \in \NS(X)$ be an ample class, and $B \in \NS_{\Q}(X)$ a rational class. Consider the stability condition $\sigma_{\alpha H,B}$.

\begin{Thm}[{{\cite[Proposition $14.1$]{bridgeland_stability_2006}}} and {{\cite[Section 6]{Toda_Moduli}}}]\label{large_volume}
Let $\vv=(r,\Delta,s)$ be a primitive Mukai vector, with either $r >0 $ or $r=0$ and $\Delta \neq 0$ effective. Then there exists an $\alpha_{0}$ such that, for every $\alpha \geq \alpha_{0}$, an object $E \in D^{b}(X)$ of class $\vv$ is $\sigma_{\alpha H,B}$-stable if and only if it is a shift of a $B$-twisted $H$-Gieseker stable sheaf. 
\end{Thm}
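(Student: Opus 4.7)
The plan is to turn the statement into an explicit comparison of phase orderings as $\alpha\to\infty$. A direct computation using the Mukai pairing gives
\[
Z_{\alpha H,B}(r,\Delta,s) = \left(\Delta\cdot B + \tfrac{r\alpha^{2}H^{2}}{2} - \tfrac{rB^{2}}{2} - s\right) + i\,\alpha H\cdot(\Delta - rB),
\]
so for $r>0$ the slope $\nu_{\sigma_{\alpha H,B}} = -\Re Z/\Im Z$ admits an expansion
\[
\nu_{\sigma_{\alpha H,B}}(E) = -\frac{r\alpha H^{2}}{2H\cdot(\Delta-rB)} + O(1/\alpha),
\]
whose leading term is governed by the $H$-slope after $B$-twist, and whose subleading term records exactly the next coefficient of the $B$-twisted reduced Hilbert polynomial. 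For $r=0$ with $\Delta\neq 0$ effective one has $\Im Z>0$ of order $\alpha$ while $\Re Z$ is constant in $\alpha$, so $\nu\to 0$ with a subleading comparison of the same flavour. This dictionary between $\nu_{\sigma_{\alpha H,B}}$ and the lexicographic (slope, then reduced Hilbert polynomial) ordering on sheaves drives the whole proof.

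For the direction ``Gieseker stable implies $\sigma$-stable'', first observe that a $B$-twisted Gieseker stable sheaf $E$ lies in $\Coh^{\alpha H,B}(X)$ (possibly after the shift $[1]$, depending on the sign of $H\cdot(\Delta-rB)$) for all $\alpha$ large enough, since $E$ is slope-semistable and the tilt cutoff $HB/(\alpha H^{2})$ tends to $0$. If $0\to F\to E\to G\to 0$ were a destabilising sequence in the heart with $\phi(F)\ge\phi(E)$, the long exact sequence in sheaf cohomology produces an honest subsheaf $\cH^{0}(F)\subset\cH^{0}(E)$ (or a quotient, in the shifted case), and converting the phase inequality through the expansion above gives an inequality of $B$-twisted reduced Hilbert polynomials that contradicts Gieseker stability of $E$.

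For the converse, suppose $E$ of class $\vv$ is $\sigma_{\alpha H,B}$-stable. The canonical exact triangle $\cH^{-1}(E)[1]\to E\to\cH^{0}(E)$ in the heart would destabilise $E$ as soon as both cohomology sheaves are nonzero: when they lie in the heart, the leading term of $\nu$ forces the phase of $\cH^{-1}(E)[1]$ towards $1$ and that of $\cH^{0}(E)$ towards $0$, so for $\alpha$ large the resulting gap is strict. Hence $E$ is a shift of a coherent sheaf $F$, and reversing the argument of the previous paragraph, any Gieseker-destabilising subsheaf of $F$ yields a destabilising subobject of $E$ in the heart; therefore $F$ is $B$-twisted Gieseker stable.

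The main obstacle is \emph{uniformity}: every comparison above must hold for a single threshold $\alpha_{0}$ valid for all objects of class $\vv$. This is where boundedness enters: by the Bogomolov inequality together with axiom~(5) of the definition of stability condition, the Mukai vectors of the possible Harder--Narasimhan factors of objects of class $\vv$, and of potential destabilising subobjects, are confined to a finite set. Taking $\alpha_{0}$ large enough to make the finitely many resulting phase inequalities strict then produces the required threshold.
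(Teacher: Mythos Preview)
The paper does not prove this theorem; it is quoted as a result from \cite[Proposition~14.1]{bridgeland_stability_2006} and \cite[Section~6]{Toda_Moduli}, so there is no in-paper proof to compare against. Your sketch follows the standard asymptotic argument used in those references.

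That said, there are a few inaccuracies worth fixing. First, the heart $\Coh^{\alpha H,B}(X)$ does \emph{not} depend on $\alpha$: the defining condition $\mu_{\alpha H}(A_i)>\frac{(\alpha H)\cdot B}{(\alpha H)^2}$ is $\frac{H\cdot c_1(A_i)}{rH^2}>\frac{H\cdot B}{H^2}$ after cancelling $\alpha$ (the paper itself notes this in Section~\ref{subsec:Positivesquare}). So your sentence about ``the tilt cutoff $HB/(\alpha H^2)$ tending to $0$'' is wrong; the conclusion that a slope-semistable $E$ lies in the heart (or its shift does) is correct, but for the static reason governed by the sign of $H\cdot(\Delta-rB)$, not an asymptotic one. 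Second, in the direction $\sigma$-stable $\Rightarrow$ Gieseker stable, a Gieseker-destabilising \emph{subsheaf} $A\subset E$ is not automatically a subobject in $\Coh^{\alpha H,B}$: for $A\hookrightarrow E$ to be a monomorphism in the heart with cokernel $E/A$ one needs $E/A\in\cT$, which is not given. The usual repair is to first deduce $\mu_H$-semistability of $E$ from the leading term (the maximal $\mu$-destabilising subsheaf does lie in $\cT$ and gives a subobject in the heart), and then compare only subsheaves of the same slope, for which the quotient is again in $\cT$. Third, axiom~(5) concerns boundedness of the moduli functor, not finiteness of HN-factor classes; the uniform $\alpha_0$ is more directly obtained from the local finiteness of walls (Proposition~\ref{walls}), which gives a single outermost wall below the Gieseker chamber.
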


\begin{Cor}[{{\cite[Lemma $7.2$, Propositions $7.5$ and $7.6$]{hartmann_cusps_2012}}}]\label{preserve}
The equivalences of type $(1)-(4)$ preserve the distinguished component.
\end{Cor}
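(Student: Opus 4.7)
The plan is to verify the hypothesis of Lemma \ref{preservation_condition} for each of the four equivalences. That is, for $\Phi \colon D^b(X) \xrightarrow{\sim} D^b(X')$ of type $(1)$--$(4)$, we must exhibit a stability condition $\sigma' = (Z',\cP') \in \Stab^{\dagger}(X')$ such that every $\Phi(\cO_x)$ is $\sigma'$-stable and $\Omega_{Z'} \in \cP_0^+(X')$. The natural candidate is a large volume limit stability condition $\sigma' = \sigma_{\alpha H, B}$ on $X'$ with $\alpha \gg 0$, where $H$ is ample and $B \in \NS(X')_\Q$. The condition $\Omega_{Z'} = \exp(B+i\alpha H) \in \cP_0^+(X')$ is built into the construction of $V(X') \subset \Stab^{\dagger}(X')$. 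Thus everything reduces to checking that each $\Phi(\cO_x)$ is, up to shift, a $B$-twisted Gieseker-stable sheaf on $X'$, so that Theorem \ref{large_volume} forces $\sigma'$-stability for $\alpha$ sufficiently large.

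For the individual cases, the computation of $\Phi(\cO_x)$ is essentially immediate. In case $(1)$, $\cO_x \otimes L \cong \cO_x$, which is Gieseker stable. In case $(2)$, $\Phi(\cO_x) = \cO_x[1]$, the shift of a Gieseker-stable sheaf. In case $(3)$, the defining triangle of the spherical twist specializes to
\[
\cO_X \longrightarrow \cO_x \longrightarrow \ST_{\cO_X}(\cO_x) \longrightarrow \cO_X[1],
\]
from which $\ST_{\cO_X}(\cO_x) \cong I_x[1]$, the shift of the ideal sheaf of $x$, a torsion-free rank one (hence stable) sheaf. In case $(4)$, where $\Phi_{\cE} \colon D^b(M) \to D^b(X)$, by definition of the universal family $\Phi_{\cE}(\cO_m) = \cE_m$ is a Gieseker-stable sheaf on $X$ with respect to the polarization used to construct $M$, so choosing that polarization and $B=0$ works.

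Applying Lemma \ref{preservation_condition} with the large volume stability condition $\sigma'$ on $X'$ (in each case, on the appropriate target surface) now yields the conclusion. The hypothesis $\Omega_{Z'}\in\cP_0^+(X')$ holds automatically, and the stability of the computed objects $\Phi(\cO_x)$ is guaranteed by Theorem \ref{large_volume}. There is no real obstruction in the proof; the content lies entirely in the explicit identification of $\Phi(\cO_x)$ and in having formulated Lemma \ref{preservation_condition} so as to sidestep the circular use of slope-stable spherical sheaves that appears in the classical argument. The only mildly delicate point is case $(4)$, where one must remember that in the Fourier-Mukai setting the source of $\Phi$ is $M$ but the large volume condition must be chosen on the target $X$; once this is kept straight, the verification is routine.
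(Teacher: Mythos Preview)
Your argument is essentially the paper's own: for types $(3)$ and $(4)$ it is identical, identifying $\ST_{\cO_X}(\cO_x)$ with the shifted ideal sheaf and $\Phi_{\cE}(\cO_m)$ with the Gieseker-stable sheaf $\cE_m$, then invoking Theorem~\ref{large_volume} to verify Lemma~\ref{preservation_condition}. The paper treats $(1)$ and $(2)$ more directly via Lemma~\ref{UandV}, observing that these equivalences send skyscrapers to (shifts of) skyscrapers and hence preserve $U(X)$ outright.

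One small wrinkle in your uniform treatment: for cases $(1)$ and $(2)$ you appeal to Theorem~\ref{large_volume} to conclude that $\cO_x$ is $\sigma_{\alpha H,B}$-stable, but that theorem as stated requires $r>0$ or $r=0$ with $\Delta\neq 0$ effective, neither of which holds for $v(\cO_x)=(0,0,1)$. This is harmless---the stability of skyscraper sheaves in $V(X')$ is immediate from the construction of $\Coh^{\omega,B}(X')$ (or from Lemma~\ref{UandV})---but you should cite that rather than Theorem~\ref{large_volume} for those two cases.
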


\begin{proof}
The equivalences of type $(1)$ and $(2)$ send skyscraper sheaves to (shifts of) skyscraper sheaves, so by Lemma \ref{UandV} and Proposition \ref{prop:orientation} they preserve the geometric chamber $U(X)$ and, a fortiori, the distinguished component.

For the remaining two $(3)$ and $(4)$ we use Lemma \ref{preservation_condition}: it is enough to find a stability condition $\sigma \in \Stab^{\dagger}(X)$ such that $\Phi(\cO_x)$ are $\sigma$-stable, and whose central charge satisfies $\Omega_Z \in \cP_0^+(X)$. 
For the spherical twist, notice that $\ST_{\cO_X}(\cO_x)=\mathrm{m}_x$, the ideal sheaf of the point $x$. These are Gieseker stable, so by choosing $\sigma_{\alpha H,B}$ appropriately as in Theorem \ref{large_volume} we find a $\sigma \in V(X)$ that works. Similarly, if $\cE$ is a universal family over a Gieseker moduli space, the objects $\Phi(\cE)(\cO_x)$ are Gieseker stable, and again we conclude by Theorem \ref{large_volume}.
\end{proof}

\section{Review: Hyperk\"{a}hler varieties and Moduli spaces} 
\label{sec:geometry}

In this section we give a short review on hyperk\"{a}hler varieties, and basic facts about moduli spaces of stable sheaves and stable complexes.

\begin{Defi}
A projective hyperk\"{a}hler variety is a smooth projective complex variety, which is simply connected and such that $H^0(X,\Omega_X^2)$ is one dimensional and spanned by a symplectic $2$-form. 
\end{Defi}

On the $H^2(X,\Z)$ there is a natural integral quadratic form $q_X$, called Fujiki-Beauville-Bogomolov form. It is a deformation invariant, and has signature $(3,b_2(X)-3)$. It satisfies the Fujiki relation
\begin{equation*}
    \int_X{\alpha^n}=\lambda_Xq_X(\alpha)^n, \qquad \alpha \in H^2(X,\Z).
\end{equation*}
The constant $\lambda_X$ is called Fujiki constant, and it is  deformation invariant. 

Fix $\sigma=(Z,\cP) \in \Stab(X)$ a stability condition, a phase $\phi \in \R$ and a Mukai vector $\vv \in \Halg$. Consider the moduli stack $\mathfrak{M}_{\sigma}(\vv,\phi)$ of $\sigma$-semistable objects of class $\vv$ and phase $\phi$. Its objects over $S$ are $S$-perfect complexes $\cE \in D^b_{S-\textrm{perf}}(S \times X)$, whose restriction over a closed point $s \in S$ belongs to $\cP(\phi)$ and has class $\vv$.  The following is a collection of result by Toda \cite{Toda_Moduli}, Inaba \cite{inaba_smoothness_2010}, and Lieblich \cite{Lieblich_complexes}. 

\begin{Thm}\label{thm:symp}
Let $X$ be a K3 surface, $\vv \in \Halg$ and $\sigma \in \Stab^{\dagger}(X)$. Then $\fM_{\sigma}(\vv,\phi)$ is an Artin stack of finite type over $\C$. Denote by $\fM_{\sigma}^s(\vv,\phi) \subseteq \fM_{\sigma}(\vv,\phi)$ the open substack parametrizing $\sigma$-stable objects. If $\fM_{\sigma}^s(\vv,\phi) = \fM_{\sigma}(\vv,\phi)$, then $\fM_{\sigma}(\vv,\phi)$ is a $\mathbb{G}_m$-gerbe over its coarse moduli space $M_{\sigma}(\vv,\phi)$, which is a smooth, proper, symplectic algebraic space with expected dimension $\vv^2+2$.   
\end{Thm}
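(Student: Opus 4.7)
The plan is to assemble the statement from three established pieces: Lieblich's representability of the stack of perfect complexes on $X$, the Abramovich--Polishchuk construction of hearts of t-structures in families, and the deformation theory of complexes due to Inaba. First, I start with Lieblich's Artin stack of universally gluable perfect complexes on $X$, which is locally of finite type over $\C$. Property $(4)$ of the family stability condition gives openness of the subfunctor of complexes whose fibers lie in $\cP(\phi)$ (after using Abramovich--Polishchuk to define the heart $\cP(\phi-1,\phi]$ on $S\times X$), and property $(5)$ gives boundedness of this subfunctor once the class is fixed to $\vv$. Together these cut out $\fM_\sigma(\vv,\phi)$ as an Artin substack of finite type over $\C$.

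For the open substack $\fM^s_\sigma(\vv,\phi)$, every stable object $E$ is simple, because $\cP(\phi)$ is of finite length and a $\sigma$-stable object has no proper subobject of the same phase, so $\Hom(E,E)=\C$ and $\Aut(E)=\mathbb{G}_m$. Rigidifying in the sense of Abramovich--Olsson--Vistoli then produces a $\mathbb{G}_m$-gerbe over the coarse moduli space $M_\sigma(\vv,\phi)$, which is an algebraic space. Smoothness and the expected dimension follow from Inaba's deformation theory: the tangent space at $[E]$ is $\Ext^1(E,E)$, and obstructions lie in the trace-free part $\Ext^2(E,E)_0$. Since $\omega_X\cong\cO_X$, Serre duality gives $\Ext^2(E,E)\cong\Hom(E,E)^{\vee}=\C$, hence $\Ext^2(E,E)_0=0$, so $M_\sigma(\vv,\phi)$ is smooth at $[E]$; Mukai's Riemann--Roch formula $\chi(E,E)=-\vv^2$ then yields $\dim \Ext^1(E,E)=\vv^2+2$. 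The holomorphic symplectic form is induced by the Serre duality pairing on $\Ext^1(E,E)$, which is alternating and non-degenerate; closedness follows from Mukai's (or Kaledin's) trace-theoretic construction.

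The delicate point, and the main obstacle, is properness. One needs a valuative criterion: given a flat family of $\sigma$-stable objects over $\Spec(K)$ with $K$ the fraction field of a DVR $R$, one must produce a unique extension over $\Spec(R)$ whose closed fiber is again $\sigma$-stable. Here I would follow the treatment of \cite[Part II]{bayer2019stability} (building on Toda and Lieblich): boundedness $(5)$ combined with openness $(4)$ allows one to extend the family to a flat family of $\sigma$-semistable objects over $\Spec(R)$, and uniqueness is detected by the $S$-equivalence class of the closed fiber. The hypothesis $\fM^s_\sigma(\vv,\phi)=\fM_\sigma(\vv,\phi)$ then upgrades $\sigma$-semistable to $\sigma$-stable on the closed fiber, giving the required unique extension. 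This completeness step is the technical heart of the theorem and is where the interplay between boundedness, openness of stability in families, and the K3 hypothesis enters in an essential way.
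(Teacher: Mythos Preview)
Your proposal is correct and aligns with the paper's treatment: the paper does not give an argument at all but simply attributes the statement to Toda \cite{Toda_Moduli}, Inaba \cite{inaba_smoothness_2010}, and Lieblich \cite{Lieblich_complexes} (with the general framework in \cite[Part II]{bayer2019stability} mentioned in the introduction), and you have accurately unpacked how those ingredients combine. Your identification of properness via the valuative criterion as the technical heart, and of smoothness and the symplectic form via Inaba's deformation theory and Mukai's Serre-duality pairing, is exactly the intended assembly.
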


In particular the assumptions are satisfied if $\vv$ is primitive and $\sigma$ is $\vv$-generic. The phase $\phi$ is determined by the rest of the data, up to an even integer. Since the corresponding moduli spaces are isomorphic via shifts, from now on we drop the $\phi$ from the notation, and denote a moduli space simply by $M_{\sigma}(\vv)$.

The moduli spaces $M_{\sigma}(\vv)$ are not necessarily fine, but if $\vv$ is primitive and $\sigma \in \Stab^{\dagger}(X)$ is $\vv$-generic, they are equipped with a quasi-universal family unique up to equivalence, by \cite[Theorem A$.5$]{mukai_symplectic_1984}.

\begin{Defi}\label{quasi-universal}
Let $M=M_{\sigma}(\vv,\phi)$ be a coarse moduli space. 
\begin{enumerate}
    \item A flat family $\cE$ on $M \times X$ is called a quasi-family of objects in $\fM_{\sigma}(\vv)$ if, for all closed points $m \in M$, there exists an integer $\rho >0$, and an element $E \in \fM_{\sigma}(\vv,\phi)(\C)$ such that $\cE|_{{t}\times X} \cong E^{\oplus \rho}$. If $M$ is connected $\rho$ is independent of $m$, and is called the similitude of $\cE$.
    \item Two quasi-families are equivalent if there exists vector bundles $V$ and $V'$ on $M$ such that $\cE' \otimes p_M^*V \cong \cE \otimes p_M^*V'$.
    \item a quasi-family $\cE$ is called quasi-universal if, for every scheme $T$ and for any quasi-family $\cT$ on $M \times X$, there exists a unique morphism $f: M \rightarrow T $ such that $f^*\cE$ and $\cT$ are equivalent.
\end{enumerate} 
\end{Defi}

Projectivity of the coarse moduli space $M_{\sigma}(\vv)$ was proved in \cite{bayer_projectivity_2013}. The problem is that in general for moduli spaces of stable complexes there is no obvious GIT construction. Bayer and Macr\`i constructed a divisor class $l_{\sigma} \in \NS(M_{\sigma}(\vv))_{\R}$ as follows: 
\[ C \mapsto l_{\sigma}.C:= \Im (-\frac{Z(v(\Phi_{\cE}(\cO_C)))}{Z(\vv)}), \]
where $C \in M_{\sigma}(\vv)$ is a curve, and $\cE$ is a quasi-universal family.

\begin{Thm}[{{\cite[Theorem $4.1$ and Remark $4.6$]{bayer_projectivity_2013}}}]\label{nefdivisor}
Let $\vv$ be a primitive Mukai vector, $\sigma \in \Stab^{\dagger}(X)$ a $\vv$-generic stability condition.  Then the class $l_{\sigma}$ defined above is ample. 
\end{Thm}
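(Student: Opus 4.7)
The plan is to reproduce the Bayer--Macr\`i strategy from \cite{bayer_projectivity_2013}, which divides the argument into three conceptually distinct steps: well-definedness, nefness via a positivity lemma, and strict positivity on curves.

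First I would check that $l_\sigma$ is independent of the chosen quasi-universal family $\cE$. If $\cE'$ is another quasi-universal family, then there exist vector bundles $V,V'$ on $M_\sigma(\vv)$ with $\cE'\otimes p_M^*V\cong \cE\otimes p_M^*V'$. For a curve $C\subset M_\sigma(\vv)$, the object $\Phi_{\cE'}(\cO_C)$ differs from $\Phi_\cE(\cO_C)$ by a class which, modulo a contribution lying in $\Z\cdot \vv$, is annihilated by the operation $\Im(-{\cdot}/Z(\vv))$. Hence $l_\sigma\cdot C$ is the same. The same formula then extends, compatibly with pullback, to a class on the base of any flat family of $\sigma$-semistable objects of class $\vv$ and fixed phase $\phi$.

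Next comes the main technical step, the \emph{positivity lemma}: for any proper integral curve $C$ and any flat family $\cF$ over $C\times X$ of $\sigma$-semistable objects of class $\vv$ and phase $\phi$, one has $l_\sigma(\cF)\cdot C\geq 0$, with equality if and only if all closed fibres are $S$-equivalent. The idea is to analyze $\Phi_\cF(\cO_C)=p_{X*}\cF\in D^b(X)$ in terms of the slicing $\cP$. Using that $\cO_C$ sits in a triangle built from its torsion/torsion-free pieces (after shifting into the desired heart) and that every fibre $\cF_c$ lies in $\cP(\phi)$, a standard spectral-sequence and base-change argument shows that the Harder--Narasimhan factors of $\Phi_\cF(\cO_C)$ all have phase in $[\phi,\phi+1]$. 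Consequently $-Z(\Phi_\cF(\cO_C))/Z(\vv)$ lies in the closed upper half-plane, which yields the inequality. The equality case uses the support property (axiom~(3) of a stability condition): if the imaginary part vanishes, every Jordan--H\"older factor must have phase exactly $\phi$ and Mukai vector parallel to $\vv$, which forces the family to be pointwise $S$-equivalent.

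Specializing the positivity lemma to $\cF$ the (quasi-)universal family restricted to $C\times X$ shows $l_\sigma$ is nef on $M_\sigma(\vv)$. For strict positivity, suppose $l_\sigma\cdot C=0$ for some irreducible curve $C\subset M_\sigma(\vv)$. By the equality clause, all fibres over $C$ are mutually $S$-equivalent; but $\sigma$ is $\vv$-generic, so by Proposition~\ref{walls} every $\sigma$-semistable object of class $\vv$ is $\sigma$-stable, and distinct stable objects of the same phase cannot be $S$-equivalent. Since $C$ is mapped injectively (up to finite identifications) into the coarse moduli space, this forces $C$ to be a point, a contradiction. Hence $l_\sigma\cdot C>0$ for every curve, and nefness upgrades to ampleness via the Nakai--Moishezon criterion applied on the smooth proper algebraic space $M_\sigma(\vv)$ (using boundedness/finite-typeness from Theorem~\ref{thm:symp} to reduce to the scheme case, or more directly the Kleiman criterion for algebraic spaces).

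The main obstacle is the positivity lemma, specifically the phase control of $\Phi_\cF(\cO_C)$: one must set up the t-structures on $D^b(C)$ and $D^b(X)$ compatibly, handle both the torsion and torsion-free parts of $\cO_C$, and exploit the Noetherian/support property of $\sigma$ to get a sharp equality criterion. Everything else is formal manipulation once this lemma is in hand.
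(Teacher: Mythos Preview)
Your outline of the positivity lemma is correct and matches the Bayer--Macr\`i argument, but the final step---passing from ``$l_\sigma\cdot C>0$ for every curve $C$'' to ``$l_\sigma$ is ample''---is a genuine gap. Nakai--Moishezon requires $l_\sigma^{\dim V}\cdot V>0$ for \emph{every} irreducible subvariety $V$, not just curves; a nef class that is strictly positive on all curves need not be ample (Mumford's ruled-surface example already has $D\cdot C>0$ for every curve but $D^2=0$). Kleiman's criterion likewise asks for $l_\sigma$ to lie in the interior of the nef cone, which is not implied by positivity on curves alone. Worse, at this stage $M_\sigma(\vv)$ is only known to be a smooth proper algebraic space, so numerical ampleness criteria that presuppose projectivity are not yet available to you.

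The way Bayer--Macr\`i actually close the argument---and the only step the present paper singles out---is different: they use a Fourier--Mukai equivalence to identify $M_\sigma(\vv)$ with a moduli space of (twisted) Gieseker-stable sheaves, for which projectivity is known via GIT, and under this identification $l_\sigma$ matches a GIT polarization. This is precisely the content of \cite[Lemma~7.3]{bayer_projectivity_2013} mentioned immediately after the statement. Your sketch omits this reduction entirely; without it, the positivity lemma yields only nefness (and strict positivity on curves), not ampleness.
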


One of the key steps in the proof is the use of \cite[Lemma $7.3$]{bayer_projectivity_2013} to reduce to to the classical case of Gieseker stable sheaves. The same reduction argument also shows  irreducibility of $M_{\sigma}(\vv)$. The following statement summarizes the discussion above. 

\begin{Cor}\label{Cor:projectivity}
Let $X$ be a K3 surface, $\vv \in \Halg$ a primitive vector with $\vv^2 \geq -2$. Let $\sigma \in \Stab^{\dagger}(X)$ be a $\vv$-generic stability condition. Then if $M_{\sigma}(\vv)$ is non-empty, it is a smooth, projective symplectic variety of dimension $\vv^2+2$ and it consists of stable objects. 
\end{Cor}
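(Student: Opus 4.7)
The plan is to assemble the results already established in this section into a single statement; each listed property follows by direct invocation of one of the preceding theorems, so the proof will be short and the only real work is checking that the hypotheses line up.

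First I would use Proposition \ref{walls}: since $\vv$ is primitive and $\sigma$ is $\vv$-generic, no $\sigma$-semistable object of class $\vv$ is strictly semistable, so $\fM^s_\sigma(\vv) = \fM_\sigma(\vv)$ as stacks. This is exactly the hypothesis required by Theorem \ref{thm:symp}, which, applied together with the non-emptiness assumption, simultaneously provides smoothness, properness, the holomorphic symplectic form, and the expected dimension $\vv^2 + 2$ of the coarse moduli space $M_\sigma(\vv)$, and also the fact that all its points correspond to $\sigma$-stable objects.

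For projectivity I would invoke Theorem \ref{nefdivisor}: the Bayer--Macr\`i class $l_\sigma \in \NS(M_\sigma(\vv))_{\R}$ is ample, and a proper algebraic space admitting an ample line bundle is automatically a projective scheme, so projectivity is immediate from the properness already extracted in the previous step.

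The only property not contained verbatim in Theorems \ref{thm:symp} and \ref{nefdivisor} is irreducibility, and this is where I expect the most care is needed. I would follow the strategy indicated in the text: via a Fourier--Mukai transform, using Corollary \ref{preserve} to ensure that the distinguished component $\Stab^\dagger(X)$ is preserved, one reduces to the classical setting of a moduli space of $B$-twisted Gieseker-stable sheaves, whose irreducibility is a result of Yoshioka. Concretely, this is the content of \cite[Lemma 7.3]{bayer_projectivity_2013}. The main obstacle here is thus not conceptual but a matter of bookkeeping: one must verify that, after a suitable chain of equivalences of types $(1)$--$(4)$ from Subsection \ref{subsec:preserveStab}, the transformed Mukai vector and stability condition satisfy the hypotheses of the reduction lemma, at which point irreducibility transfers back to $M_\sigma(\vv)$.
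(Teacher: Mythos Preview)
Your proposal is correct and matches the paper's approach exactly: the corollary is explicitly presented as a summary of the preceding discussion, combining Proposition~\ref{walls} (semistable $=$ stable), Theorem~\ref{thm:symp} (smooth proper symplectic algebraic space of dimension $\vv^2+2$), and Theorem~\ref{nefdivisor} (ampleness of $l_\sigma$, hence projectivity), with irreducibility obtained from the reduction argument of \cite[Lemma~7.3]{bayer_projectivity_2013}. The only remark worth adding is the subtlety the paper flags immediately after the corollary: the reduction of \cite[Lemma~7.3]{bayer_projectivity_2013} presupposes the Main Theorem for isotropic $\vv$, so in the logical order of the paper neither projectivity nor irreducibility from this corollary is actually used before that case is settled independently.
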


\begin{Rem}
There is a subtlety here. To use the arguments in \cite[Lemma $7.3$]{bayer_projectivity_2013} we need to know the Main Theorem for the case of a primitive vector $\vv \in \Halg$ with $\vv^2=0$. This is not a problem for us, because we will not use projectivity in the arguments at all, and in fact we will also reprove irreducibility via a deformation argument. Moreover, the Main Theorem for the square zero case will be proved separately from the positive square case, so we will be able to apply the above corollary to get projectivity in the latter case.
\end{Rem}

\begin{Defi}\label{Def:MukaiHom}
Let $\vv \in \Halg$ a primitive class with $\vv^2>0$, and let $\sigma \in \Stab^{\dagger}(X)$ be a $\vv$-generic stability condition, in particular every $\sigma$-semistable object is $\sigma$-stable. We define the Mukai homomorphism $\theta_{\vv}:\vv^{\perp} \rightarrow H^{2}(M_{\sigma}(\vv),\Z)$ by 
\begin{equation}\label{eq:mukai_hom}
    \theta_{\vv}(\xx)=\frac{1}{\rho}[\Phi^{H}_{\cE}(\xx^{\vee})]_1
\end{equation} 
where $\cE$ is a quasi-universal family of similitude $\rho$, and $[-]_1$ is the component belonging to $H^{2}(M_{\sigma}(\vv),\Z)$. If $\vv^2=0$, the same formula gives a well defined map $\theta_{\vv}:\vv^{\perp}/\Z.\vv \rightarrow H^{2}(M_{\sigma}(\vv),\Z)$
\end{Defi}

It can be shown that it does not depend on the quasi-universal family $\cE$ if we restrict to $\vv^{\perp}$.

\begin{Rem}
The definition of Mukai homomorphism in \cite{ogrady_weight-two_1995} and \cite{Yoshioka_main} is 
\[ \frac{1}{\rho}[p_{M_{\sigma}(\vv)*}\ch(\cE)p_X^*(\sqrt{\td_X}x^{\vee})]_1. \]
This is equivalent to ours. Indeed, recall the definition of the Mukai vector \[v(\cE)=\ch(\cE)p^*_{M_{\sigma}(\vv)}\left(\sqrt{\td_{M_{\sigma}(\vv)}}\right)p^*_{X}\left(\sqrt{\td_X}\right).\] The degree two component of Todd class $\td_{M_{\sigma}(\vv)}$ is $0$, because $M_{\sigma}(\vv)$ has trivial canonical bundle, so its square root does not contribute to the degree two component. 
\end{Rem}

Recall that an anti-equivalence is an equivalence from the opposite category $D^b(X)^{\textrm{op}}$ to $D^b(X')$. Every anti-equivalence is given by a composition of an equivalence and the dualizing functor $\RcHom(-,\cO_X')$.

\begin{Prop}\label{prop:equivalences}
Let $X,X'$ be two K3 surfaces, $\vv,\vv'$ two Mukai vectors on $X$ and $X'$ respectively. Let $\sigma \in \Stab^{\dagger}(X)$ be a $\vv$-generic stability condition on $X$, and $\sigma' \in \Stab^{\dagger}(X')$ a $\vv'$-generic stability condition on $X'$. Assume that there is an (anti)-equivalence $\Phi: D^{b}(X) \xrightarrow{\sim} D^{b}(X')$ that induces an isomorphism $M_{X,\sigma}(\vv) \xrightarrow{\sim} M_{X',\sigma'}(\vv')$. If $\vv^2>0$ we have a commutative diagram
\begin{center}
\begin{tikzcd}
\vv^{\perp} \arrow[d, "\theta_{\vv}"'] \arrow[r, "\Phi^H"] & \vv'^{\perp} \arrow[d, "\theta_{\vv'}"'] \\
{H^{2}(M_{X,\sigma}(\vv),\Z)} \ar[equal]{r}        & {H^2(M_{X',\sigma'}(\vv'),\Z)}          
\end{tikzcd}
\end{center}
The analogous statement holds if $\vv^2=0$. In particular, if $\theta_{\vv}$ is a Hodge isometry then so it is $\theta_{\vv'}$.
\end{Prop}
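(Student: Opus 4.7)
The plan is to transfer the quasi-universal family from $M := M_{X,\sigma}(\vv)$ to a quasi-universal family on $M' := M_{X',\sigma'}(\vv')$ via $\Phi$, and then to exploit that $\Phi^H$ is a Hodge isometry with respect to the Mukai pairing. Throughout, I use the isomorphism $f: M \xrightarrow{\sim} M'$ coming from $\Phi$ to identify the two moduli spaces.

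First I would treat the equivalence case. Let $\cE$ be a quasi-universal family of similitude $\rho$ on $M \times X$, in the sense of Definition \ref{quasi-universal}. The relative Fourier-Mukai transform $\cE' := (\id_M \times \Phi)\cE$ on $M \times X'$ has fibers $\Phi(E_m)^{\oplus \rho}$ over $m \in M$, so, after identifying $M$ with $M'$ via $f$, it is a quasi-universal family of similitude $\rho$ for $M'$. Using the K\"unneth decomposition $H^*(M \times X) \cong H^*(M) \otimes H^*(X)$, write $v(\cE) = \sum_i \alpha_i \otimes \beta_i$. Functoriality of the Fourier-Mukai transform on cohomology gives $v(\cE') = \sum_i \alpha_i \otimes \Phi^H(\beta_i)$.

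Next I would compute $\Phi^H_{\cE'}$ on $\Phi^H(\xx)^{\vee}$ for $\xx \in \vv^{\perp}$:
\[
\Phi^H_{\cE'}(\Phi^H(\xx)^{\vee}) = \sum_i \alpha_i \cdot \int_{X'} \Phi^H(\beta_i) \cdot \Phi^H(\xx)^{\vee}.
\]
Since the Mukai pairing satisfies $(a,b)_X = -\int_X a \cdot b^{\vee}$ and $\Phi^H$ is a Hodge isometry for this pairing, we have $\int_{X'} \Phi^H(\beta_i) \cdot \Phi^H(\xx)^{\vee} = \int_X \beta_i \cdot \xx^{\vee}$, so the sum collapses to $(p_M)_*(v(\cE) \cdot p_X^*(\xx^{\vee})) = \Phi^H_{\cE}(\xx^{\vee})$. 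Dividing by $\rho$ and projecting to the degree-two component yields $\theta_{\vv'}(\Phi^H(\xx)) = \theta_{\vv}(\xx)$, which is exactly the commutativity of the diagram.

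For the anti-equivalence case, I would write $\Phi = \Psi \circ \RcHom(-,\cO_X)[k]$ for a genuine equivalence $\Psi$ and an appropriate shift $k$. On algebraic cohomology the shifted dualizing functor acts by the Mukai involution $\xx \mapsto \xx^{\vee}$ (up to a sign absorbed in the shift), which is itself an isometry for the Mukai pairing, so the same argument goes through after tracking the extra $\vee$. For the $\vv^2=0$ case, the Mukai homomorphism is defined on $\vv^{\perp}/\Z \cdot \vv$; since $\Phi^H$ sends $\vv$ to $\vv'$ up to sign, it descends to an isomorphism of quotients and the identical computation applies. The main technical nuisance I anticipate is verifying carefully that $(\id_M \times \Phi)\cE$ actually is a quasi-universal family of the same similitude (which uses that $\Phi$ is an equivalence and uniqueness of quasi-universal families up to equivalence, cf. \cite[Theorem A.5]{mukai_symplectic_1984}), and bookkeeping the signs and shifts in the anti-equivalence case; the isometry identity itself does the rest of the work.
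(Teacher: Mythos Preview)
Your proposal is correct and is essentially the argument that the paper defers to, namely the computations in \cite[Propositions~2.4 and~2.5]{Yoshioka_main}: one transports the quasi-universal family through $\Phi$, expands the Mukai vector via K\"unneth, and reduces the comparison of $\theta_{\vv'}\circ\Phi^H$ with $\theta_{\vv}$ to the fact that $\Phi^H$ preserves the Mukai pairing. The paper gives no further details beyond that citation, so your write-up is in fact more explicit than the paper's own proof.
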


\begin{proof}
This follows from the same computations of \cite[Propositions $2.4$ and $2.5$]{Yoshioka_main}.
\end{proof}

We conclude this section with two concrete examples. These will be the main geometric input in the proof of the Main Theorem: the Hilbert scheme is the base case to which we want to reduce, and moduli spaces of vector bundles on the fibers of an elliptic K3 surface will be Fourier-Mukai partners. 

\begin{Ex}\label{ex:hilbert_scheme}
Let $X$ be a K3 surface, consider the vector $\vv=(1,0,1-n)$ with $n\ \geq 2$. A sheaf with class $\vv$ has trivial double dual, and the natural map $\cF \rightarrow \cF^{\vee \vee} \cong \cO_X $ has cokernel of length $n$. Every such sheaf is torsion free with rank one, so it is Gieseker stable with respect to any polarization $H$. The natural map 
\begin{align*}
M_H(\vv) &\rightarrow \Hilb^n(X)\\
\cF &\mapsto (\cF \onto \cF^{\vee \vee}/\cF)
\end{align*}
is an isomorphism, where $\Hilb^n(X)$ is the Hilbert scheme of $n$ points. The Mukai homomorphism is compatible with such identification, and for $\Hilb^n(X)$ it is an isometry \cite[Section $6$]{beauville_1983}.
\end{Ex}

\begin{Ex}\label{ex:Fm_partner}
Let $X$ be an elliptic K3 surface, assume that $\Pic X=\Z s \oplus \Z f$, where $f$ is the class of a fiber, and $s$ is the class of a section. The intersection form with respect to this basis is
\[
\left(
\begin{array}{cc}
    -2 & 1  \\
     1 & 0
\end{array}
\right),
\]
so the Picard group is an hyperbolic plane. Consider a Mukai vector 
\[
\ww=(0,\alpha f,\beta), \textrm{ with } \alpha>0, \beta \neq 0 \textrm{ and } \gcd (\alpha,\beta)=1.
\]
Let $H$ be a generic polarization, i.e. one for which all the $H$-semistable sheaves are stable, they exist because we are assuming $\beta \neq 0$. It follows from the definition and the Grothendieck-Riemann-Roch Theorem that if $E$ is a slope-stable bundle of rank $\alpha$ and degree $\beta$ supported on a smooth fiber $C \in |f|$, then it is $H$-stable as a torsion sheaf on $X$. So the Gieseker moduli space $M:=M_H(\ww)$ is a smooth, projective, symplectic surface.
\end{Ex}

\section{Wall-crossing: Semirigid case}
\label{sec:Wall crossing}

The objective of this section is to show that the statement of the Main Theorem is preserved under wall-crossing, when the Mukai vector $\vv$ is spherical ($\vv^2=-2$) or isotropic ($\vv^2=0$). The precise setup is the following. We fix a K3 surface $X$, a primitive Mukai vector with $\vv^2=-2$ or $\vv^2=0$, and $\cW \subset \Stab^{\dagger}(X)$ a wall for $\vv$. We denote the adjacent chambers with $\cC_+$ and $\cC_-$, we also denote with $\sigma_{\pm}$ a generic stability condition in $\cC_{\pm}$, and with $\sigma_0=(Z_0,\cP_0)$ a generic stability condition on the wall. The following is the main result of this section. 

\begin{Thm}
\label{semirigid1}
Let $X$ be a K3 surface, $\vv$ be a primitive vector, with $\vv^{2}=-2$ or $\vv^2=0$. Let $\cW$ be a wall for the wall and chamber decomposition for $\vv$.
\begin{enumerate}
    \item If $\vv^{2}=-2$, then $M_{\sigma_{+}}(\vv) \neq \emptyset $ implies $M_{\sigma_{-}}(\vv) \neq \emptyset$. 
    \item If $\vv^{2}=0$, then there exists a spherical, $\sigma_{0}$-stable object $S$ such that either $\ST_{S}$ or $\ST_{S}^{\pm 2}$ induce an isomorphism $M_{\sigma_{+}} \xrightarrow{\sim} M_{\sigma_{-}}$.
\end{enumerate}
\end{Thm}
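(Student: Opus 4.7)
The standard wall-crossing framework attaches to $\cW$ the rank-$2$ primitive sublattice $\cH_\cW := \{\aa \in \Halg : Z_0(\aa) \in \R \cdot Z_0(\vv)\}$, which contains $\vv$ and, for a generic $\sigma_0 \in \cW$ with $\cW$ a genuine wall, has signature $(1,1)$. Every $\sigma_0$-stable factor of a $\sigma_0$-semistable object of class $\vv$ has Mukai vector in $\cH_\cW$ of square at least $-2$, so the first step is to list the relevant classes $\aa \in \cH_\cW$ (those primitive and spherical, primitive and isotropic, or of positive square) and then to enumerate the possible Jordan-H\"older decompositions $\vv = \sum n_i \aa_i$.

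\textbf{Spherical case ($\vv^2=-2$).} Take $E_+ \in M_{\sigma_+}(\vv)$. If $E_+$ is $\sigma_0$-stable, openness of stability at $\sigma_0$ gives $\sigma_-$-stability, so $M_{\sigma_-}(\vv) \neq \emptyset$. Otherwise $E_+$ is strictly $\sigma_0$-semistable, and primitivity of $\vv$ together with $\vv^2 = -2$ and the signature-$(1,1)$ structure of $\cH_\cW$ leaves only a short list of JH configurations (essentially two factors: a spherical plus an isotropic class, or two spherical classes meeting in a specific way). In each case I would reassemble the same $\sigma_0$-stable factors by reversing their order to build an object $E_-$ whose HN filtration with respect to $\sigma_-$ is the reverse of the JH filtration of $E_+$; existence of non-trivial extensions in the correct direction follows from Serre duality, which gives the needed $\Ext^1$ between the $\sigma_0$-stable factors. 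The resulting $E_-$ is $\sigma_-$-stable of class $\vv$.

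\textbf{Isotropic case ($\vv^2=0$).} Since $\cH_\cW$ is rank $2$, hyperbolic, and contains the primitive isotropic $\vv$, a routine lattice calculation produces a primitive spherical class $\ss \in \cH_\cW$; the associated $\sigma_0$-stable spherical object $S$ is unique up to isomorphism because it is rigid, and it is the object claimed in the statement. The cohomological action $\ST_S^H : \vv \mapsto \vv + (\vv,\ss)\ss$ preserves $\vv$ exactly when $(\vv,\ss)=0$. In that sub-case I would show that $\ST_S$ (or $\ST_S^{-1}$) sends $\sigma_+$ into the chamber $\cC_-$, because $\ST_S$ interchanges $S$ and $S[1]$ and thereby swaps the two chambers adjacent to $\cW$; this induces $M_{\sigma_+}(\vv) \xrightarrow{\sim} M_{\sigma_-}(\vv)$. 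When $(\vv,\ss) \neq 0$, the reflection does not fix $\vv$, but $(\ST_S^H)^2 = \Id$ on $\Halg$ while $\ST_S^2$ still swaps the chambers, and applying the same argument to $\ST_S^{\pm 2}$ produces the desired isomorphism.

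\textbf{Main obstacle.} The most delicate ingredient is promoting the cohomological picture to a genuine isomorphism of moduli spaces: verifying that $\ST_S^{\pm k}$ of a $\sigma_+$-stable object is actually $\sigma_-$-stable, not merely $\sigma_-$-semistable. I would do this by tracking the image $\ST_S^{\pm k} \cdot \sigma_+$ inside $\Stab^\dagger(X)$ (a well-defined action, since $\ST_S$ preserves the distinguished component by Corollary \ref{preserve}) and showing it lies in $\cC_-$. A parallel subtlety in the spherical case is excluding ``parasitic'' JH configurations with more than two stable factors, or with higher multiplicities: it is precisely the combination of signature $(1,1)$, primitivity of $\vv$, and genericity of $\sigma_0$ on $\cW$ that forces the restricted form used above. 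Both issues reduce to rank-$2$ lattice bookkeeping plus a one-parameter stability deformation, which is technical but should be manageable.
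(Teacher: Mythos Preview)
Your proposal has several genuine gaps that would prevent the argument from going through.

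\textbf{Spherical case.} Your claim that the Jordan--H\"older configuration is ``essentially two factors: a spherical plus an isotropic class, or two spherical classes'' is doubly wrong. First, Mukai's Lemma (Lemma~\ref{Mukailemma}) applied to a spherical $E$ forces every JH factor to have $\ext^1=0$, so all factors are spherical; an isotropic factor cannot occur. Second, and more seriously, while there are only two \emph{distinct} $\sigma_0$-stable spherical factors $S,T$ (this is Proposition~\ref{jhforspherical}), they can appear with arbitrarily high multiplicity: the class $\vv$ may be any one of the infinitely many spherical classes $\ss_{-i}$ or $\tt_i$ in the lattice spanned by $\ss,\tt$ (see Remark~\ref{twospherical}). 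Your ``reverse the order and use Serre duality'' idea handles only the case where $\vv=\ss+\tt$ has exactly one copy of each. The paper instead constructs a $\sigma_-$-stable object of class $\ss_{-i}$ \emph{inductively}, by repeatedly applying spherical twists $\ST_{S^-_{-j}}$ for $j<i$ and checking stability at each step via Lemma~\ref{simpleext}. This inductive tower is the heart of the argument and is absent from your plan.

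\textbf{Isotropic case.} You assume $\cH_\cW$ always has signature $(1,1)$, but Lemma~\ref{signature} only gives that $\cH$ is hyperbolic \emph{or} negative semi-definite, and both cases genuinely occur when $\vv^2\le 0$. The two cases behave differently: in the semi-definite case there are two $\sigma_0$-stable spherical objects $S,T$ and only a $\P^1$ of objects destabilizes (Proposition~\ref{jhforisotropic}(1)); in the hyperbolic case there is a single spherical $S$, the wall is totally semistable, and the JH filtration involves $S^{\oplus a}$ with $a=(\ss,\ww)>0$. Your criterion $(\vv,\ss)=0$ versus $(\vv,\ss)\ne 0$ happens to align with this dichotomy, but you have not identified it. More critically, you assert that ``the associated $\sigma_0$-stable spherical object $S$'' exists because the spherical \emph{class} $\ss$ exists; this is precisely what the paper is trying to avoid assuming. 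The existence of $S$ is extracted from the JH filtration of an actual $E\in M_{\sigma_+}(\vv)$, not from the lattice. Finally, your proposed verification that $\ST_S^{\pm k}\cdot\sigma_+\in\cC_-$ via Corollary~\ref{preserve} fails: that corollary only treats $\ST_{\cO_X}$, not an arbitrary spherical twist $\ST_S$. The paper bypasses this entirely by working at the level of objects: it computes $\ST_S(E)$ (resp.\ $\ST_S^{\pm 2}(E)$) explicitly as an extension of $S$ and $T$ (resp.\ $S^{\oplus a}$ and $F$) in the reversed order, and applies Lemma~\ref{simpleext} to conclude $\sigma_-$-stability directly.
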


This immediately implies the invariance of the Main Theorem under wall-crossing for spherical and isotropic classes, see Corollary \ref{yoshiokaez}. 

\subsection{Lattice associated to the wall}
The key tool to study wall-crossing is a rank two lattice $\cH$ associated to our setup. It was introduced in \cite[Section $5$]{mmp}, for the case of a vector with $\vv^2>0$. In that case, $\cH$ is always hyperbolic, while if $\vv^2 \leq 0$ it can also be negative semi-definite.   

\begin{Defi}\label{def:lattice}
Define the lattice associated to $\cW$ as 
\[
\cH:=\{\ww \in \Halg \mid \Im \frac{Z_0(\ww)}{Z_0(\vv)}=0 \}.
\]
\end{Defi}

\begin{Prop}[{{\cite[Proposition $5.1$]{mmp}}}]
The lattice $\cH$ has the following properties. 
\begin{enumerate}
\item It is a rank $2$ primitive sublattice of $\Halg$.
\item For every $\sigma_{+}$-stable object $E$ of class $\vv$, the Mukai vectors of its Harder-Narasimhan factors with respect to $\sigma_{-}$ are contained in $\cH$. 
\item If $E$ is $\sigma_{0}$-semistable of class $\vv$, then the Mukai vectors of its Harder-Narasimhan factors with respect to $\sigma_{-}$ are contained in $\cH$.
\item If $E$ is $\sigma_{0}$-semistable of class $\vv(E) \in \cH$, then its Jordan-Holder factors have Mukai vector in $\cH$. 
\end{enumerate}
\end{Prop}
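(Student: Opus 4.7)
\emph{Part (1) (primitivity and rank).}  The condition $\Im(Z_0(\ww)/Z_0(\vv))=0$ defines an $\R$-linear hyperplane in $\Halg\otimes\R$, and $\cH$ is the intersection of this hyperplane with $\Halg$, hence automatically saturated, and in particular primitive. Certainly $\vv\in\cH$; moreover the very existence of the wall $\cW$ at $\sigma_0$ forces at least one destabilizing class in $\cH$ independent of $\vv$, so $\rk\cH\geq 2$. For the upper bound I would invoke Bridgeland's deformation theorem to identify a neighborhood of $\sigma_0$ in $\Stab(X)$ with a neighborhood of $Z_0$ in $\Hom(\Halg,\C)$. For every $\ww\in\cH\setminus\Q\vv$, the locus $H_\ww:=\{Z\mid\Im(Z(\ww)/Z(\vv))=0\}$ is a real-analytic hypersurface through $Z_0$; a direct calculation shows that two $\Z$-linearly independent classes (modulo $\vv$) produce hypersurfaces with distinct tangent spaces at $Z_0$. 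Consequently $\rk\cH\geq 3$ would realize $\sigma_0$ as an intersection point of two distinct walls, contradicting the assumption that $\sigma_0$ is generic on $\cW$.

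\emph{Parts (2) and (3).} Suppose $E$ is $\sigma_+$-stable (for (2)) or $\sigma_0$-semistable (for (3)) of class $\vv$. In the first case, since stability is constant on the chamber $\cC_+$ and semistability is a closed condition, $E$ is also $\sigma_0$-semistable; in the second case this is assumed. Let $A_1,\dots,A_n$ denote the $\sigma_-$-HN factors of $E$, arising from a filtration $0=E_0\subset\cdots\subset E_n=E$ with strictly decreasing $\sigma_-$-phases. Each $A_i$ remains $\sigma$-semistable throughout the chamber $\cC_-$, and hence $\sigma_0$-semistable by closedness; by continuity of phases one obtains weak inequalities $\phi_0(A_1)\geq\cdots\geq\phi_0(A_n)$. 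The non-zero morphism $A_1\to E$ together with the vanishing of $\Hom(F,G)$ for $\sigma_0$-semistable $F,G$ with $\phi_0(F)>\phi_0(G)$ forces $\phi_0(A_1)\leq\phi_0(E)$; dually the non-zero surjection $E\to A_n$ forces $\phi_0(A_n)\geq\phi_0(E)$. Sandwiching these inequalities shows that every $\phi_0(A_i)$ equals $\phi_0(\vv)$, hence $Z_0(v(A_i))\in\R_{>0}\cdot Z_0(\vv)$ and $v(A_i)\in\cH$.

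\emph{Part (4).} If $E$ is $\sigma_0$-semistable with $v(E)\in\cH$, any Jordan--H\"older filtration in the finite length abelian category $\cP_0(\phi_0(E))$ produces $\sigma_0$-stable factors $B_j$ of phase $\phi_0(E)$, so $Z_0(v(B_j))/Z_0(v(E))\in\R_{>0}$. Combined with $Z_0(v(E))/Z_0(\vv)\in\R$ (which is the assumption $v(E)\in\cH$), this yields $Z_0(v(B_j))/Z_0(\vv)\in\R$, i.e., $v(B_j)\in\cH$.

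\emph{Main obstacle.} Parts (2)--(4) are essentially formal consequences of three standard facts: the closedness of semistability under deformations of the stability condition, the continuity of the phase functions, and the vanishing of $\Hom$ between semistables of different phases. The substantive point is the rank bound in (1), which requires a careful analysis of the local structure of the wall-and-chamber decomposition to ensure that the naive $\R$-linear defining condition of $\cH$ produces a sublattice of rank exactly two; this genuinely uses that $\sigma_0$ is generic on $\cW$.
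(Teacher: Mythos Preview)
The paper does not give its own proof of this proposition; it is quoted directly from \cite[Proposition~5.1]{mmp}. Your arguments for parts (2)--(4) are correct and are the standard ones: they follow formally from closedness of semistability under specialization of the stability condition, continuity of phases, and the $\Hom$-vanishing between semistables of distinct phase, exactly as you indicate.

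For part (1), primitivity and the lower bound $\rk\cH\geq 2$ are fine. The upper bound, however, has a gap as written. The hypersurfaces $H_\ww$ you introduce are purely numerical loci, not a priori walls in the wall-and-chamber decomposition for $\vv$; hence $\sigma_0$ lying on two such hypersurfaces does not contradict the paper's definition of ``generic on $\cW$'' (which only says that $\sigma_0$ lies on a single \emph{actual} wall). The repair is to first observe that $\cW$ itself is locally an open piece of some $H_\ww$: take $\ww$ to be the class of a genuine Jordan--H\"older factor of a destabilized object; the phase alignment $\Im(Z(\ww)/Z(\vv))=0$ must then persist along all of $\cW$, so $\cW\subset H_\ww$, and both have real codimension one. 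Now any third class $\ww'\in\cH$ independent of $\vv,\ww$ forces $\sigma_0\in H_{\ww'}\cap\cW$, a proper real-analytic subset of $\cW$. Since $\Halg$ is countable, the locus in $\cW$ where $\rk\cH\geq 3$ is a countable union of such proper subsets, so $\rk\cH=2$ for \emph{very general} $\sigma_0\in\cW$. This is the sense of genericity actually used in \cite{mmp}; the paper's stated definition (``lies on only one wall'') is, strictly speaking, too weak to force $\rk\cH\leq 2$ on its own.
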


\begin{Lem}\label{signature}
The lattice $\cH$ is either hyperbolic or negative semi-definite
\end{Lem}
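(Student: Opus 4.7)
The plan is to compute the signature of the Mukai form restricted to $\cH$ by locating it inside the positive definite $2$-plane determined by $\Omega_{Z_0}$, then to rule out the only bad signature by a small deformation trick. Since $\sigma_0 \in \Stab^{\dagger}(X)$ we have $\Omega_{Z_0} \in \cP_0^+(X)$, so $u := \Real \Omega_{Z_0}$ and $v := \Imm \Omega_{Z_0}$ span a positive definite real $2$-plane $P \subset \Halg \otimes \R$. Writing $Z_0(\vv) = a + bi$, a short computation using $\C$-bilinearity of the Mukai form rewrites the defining condition $\Im (Z_0(\ww)/Z_0(\vv)) = 0$ as $(av - bu, \ww) = 0$. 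Since $av - bu$ is a nonzero element of the positive definite plane $P$, the hyperplane $(av - bu)^\perp \subset \Halg \otimes \R$ has signature $(1, \rho(X))$, and the real span $\cH \otimes \R$ sits inside it. Hence $\cH$ has at most one positive direction.

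This leaves five possible signatures for the rank $2$ lattice $\cH$, namely $(1,1), (0,2), (0,1), (0,0),$ and $(1,0)$. All of these except $(1,0)$ are hyperbolic or negative semi-definite, so the task reduces to excluding the positive semi-definite rank one case. When $\vv^2 = -2$, this is automatic: $\vv \in \cH$ has negative square, so $\cH$ contains a negative vector and cannot be positive semi-definite.

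The main difficulty is the case $\vv^2 = 0$, which I treat by contradiction. Suppose $\cH$ has signature $(1,0)$. Then the Mukai form on $\cH$ has a one-dimensional kernel which contains every null vector, so $\vv$ lies in this kernel. Pick $\ww \in \cH$ with $\ww^2 > 0$; automatically $(\vv, \ww) = 0$. Because $Z_0(\vv) \neq 0$, the number $t := Z_0(\ww)/Z_0(\vv)$ is real, so $\ww' := \ww - t\vv \in \cH$ is well defined. Using $\vv^2 = 0$ and $(\vv, \ww) = 0$ one computes $(\ww')^2 = \ww^2 > 0$, while by construction $Z_0(\ww') = 0$. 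This last identity is equivalent to $(\Omega_{Z_0}, \ww') = 0$, i.e., $\ww' \perp P$. But $P^\perp \subset \Halg \otimes \R$ is negative definite, being the orthogonal complement of a positive definite $2$-plane inside a space of signature $(2, \rho(X))$, so $(\ww')^2 \leq 0$. This contradicts $(\ww')^2 > 0$, and completes the argument.
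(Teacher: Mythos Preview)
Your argument is correct. Both you and the paper open the same way: using $\Omega_{Z_0}\in\cP_0^+(X)$, one places $\cH\otimes\R$ inside the orthogonal complement of a nonzero vector of the positive-definite $2$-plane $P$, so $\cH$ lives in a real subspace of signature $(1,\rho(X))$. The paper first normalizes via the $\widetilde{\GL}_2^+(\R)$-action so that $Z_0(\vv)=-1$, making this vector simply $\Im\Omega_{Z_0}$; you compute the combination $a\,\Im\Omega_{Z_0}-b\,\Re\Omega_{Z_0}$ directly, which amounts to the same thing.

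The genuine difference is in excluding the positive semi-definite signature $(1,0)$. The paper disposes of it in one line by appealing to the general fact that any $2$-dimensional subspace of a real quadratic space of signature $(1,\rho(X))$ contains a vector of negative square; this is uniform in $\vv^2$ but leaves that linear-algebra fact implicit. You instead split cases: $\vv^2=-2$ is immediate, and for $\vv^2=0$ you subtract a real multiple of $\vv$ from a positive-square class $\ww\in\cH$ so as to kill the entire central charge, landing in $P^\perp$, which is negative definite. Your route is more hands-on and exploits the full $2$-plane $P$ rather than just one vector in it; the paper's is shorter and case-free. Both are valid.
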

\begin{proof}
Acting with $\widetilde{GL_2(\R)}$ we can assume $\sigma_0$ be such that $Z_0(\vv)=-1$. Write $Z_0=(-,\Omega)$, since $\sigma_0 \in \Stab^{\dagger}(X)$ we have $\Omega \in \cP(X)$, in particular $(\Im\Omega)^{2}>0$. By definition, the lattice $\cH$ is contained in the orthogonal complement to $\Im \Omega_{Z}$. The Mukai lattice has signature $(2,\rho(X))$, hence the orthogonal to $\Im \Omega$ has signature $(1,\rho(X))$. This implies that $\cH$ cointains classes with negative square, hence the thesis. 
\end{proof}

\begin{Rem}
Notice that if $\cH$ were negative definite, there would be at most two spherical classes up to sign, and no isotropic class. It is easy to see that in this case every spherical object with class in $\cH$ remains stable on the wall. 
\end{Rem}

We are going to need a couple of technical lemmas, that we recall here. 

\begin{Lem}[Mukai's Lemma, {{\cite[Lemma $5.2$]{bridgeland_stability_2006}}}]\label{Mukailemma}
Let $0 \rightarrow A \rightarrow E \rightarrow B \rightarrow 0$ be a short exact sequence inside a heart $\cA \subset D^{b}(X)$. If $\Hom(A,B)=0$, then 
\[ \ext^{1}(E,E) \geq \ext^{1}(A,A) + \ext^{1}(B,B). \]
\end{Lem}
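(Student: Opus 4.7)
The plan is to reduce the stated inequality to a single inequality among $\Hom$-dimensions via an Euler-characteristic identity on the K3 surface $X$, and then to verify that reduced inequality by a direct diagram chase using the hypothesis $\Hom(A,B)=0$. Since $v(E)=v(A)+v(B)$ and Hirzebruch--Riemann--Roch gives $\chi(X,Y)=-(v(X),v(Y))$, bilinearity of the Mukai pairing yields $\chi(E,E)=\chi(A,A)+\chi(B,B)+2\chi(A,B)$. Expanding each term via $\chi(X,X)=2\hom(X,X)-\ext^{1}(X,X)$ and $\chi(A,B)=\hom(A,B)-\ext^{1}(A,B)+\hom(B,A)$ (the latter using Serre duality $\Ext^{2}(A,B)\cong\Hom(B,A)^{*}$), and invoking $\hom(A,B)=0$, this rearranges to the exact identity
\[
\ext^{1}(E,E)-\ext^{1}(A,A)-\ext^{1}(B,B)\;=\;2\bigl[\hom(E,E)+\ext^{1}(A,B)-\hom(A,A)-\hom(B,B)-\hom(B,A)\bigr].
\]
So the lemma is equivalent to the single inequality $\hom(E,E)+\ext^{1}(A,B)\geq \hom(A,A)+\hom(B,B)+\hom(B,A)$.

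To prove this reduced inequality, let $f\colon A\hookrightarrow E$ and $g\colon E\twoheadrightarrow B$. Because $\Hom(A,B)=0$, the composition $g\circ\phi\circ f$ vanishes for every $\phi\in\Hom(E,E)$, so $\phi\circ f$ factors uniquely through $f$ and $g\circ\phi$ factors uniquely through $g$, producing $\phi_{A}\in\Hom(A,A)$ and $\phi_{B}\in\Hom(B,B)$ with $\phi\circ f=f\circ\phi_{A}$ and $g\circ\phi=\phi_{B}\circ g$. This defines a linear map
\[
\Phi\colon \Hom(E,E)\longrightarrow \Hom(A,A)\oplus\Hom(B,B),\qquad \phi\longmapsto(\phi_{A},\phi_{B}).
\]
A short chase using only that $f$ is a monomorphism and $g$ an epimorphism identifies $\ker\Phi$ with the subspace $\{f\circ\chi\circ g:\chi\in\Hom(B,A)\}\cong\Hom(B,A)$.

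The cokernel of $\Phi$ is controlled by the extension class $e\in\Ext^{1}(B,A)$ of the given sequence: a pair $(\phi_{A},\phi_{B})$ lifts to an endomorphism of $E$ precisely when the pushout $\phi_{A}\cdot e$ agrees with the pullback $e\cdot\phi_{B}$ in $\Ext^{1}(B,A)$, so the obstruction map $(\phi_{A},\phi_{B})\mapsto \phi_{A}\cdot e-e\cdot\phi_{B}$ yields an injection $\coker\Phi\hookrightarrow\Ext^{1}(B,A)$. Serre duality on $X$ gives $\ext^{1}(B,A)=\ext^{1}(A,B)$, so combining the kernel computation with the cokernel bound produces
\[
\hom(A,A)+\hom(B,B)+\hom(B,A)-\hom(E,E)\;=\;\dim\coker\Phi\;\leq\;\ext^{1}(A,B),
\]
which is exactly the reduced inequality; substituting back into the Euler-characteristic identity yields Mukai's inequality. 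The only genuinely non-routine point is the clean identification of $\ker\Phi$ and of the obstruction map controlling $\coker\Phi$, and both are elementary diagram chases from $g\circ\phi\circ f=0$ together with the universal properties of $f$ and $g$.
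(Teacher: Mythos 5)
The paper does not prove this lemma; it is quoted verbatim from \cite[Lemma 5.2]{bridgeland_stability_2006}, which in turn refers to Mukai's original argument. Your proof is correct and is essentially that classical argument: the Riemann--Roch identity $\chi(E,E)=\chi(A,A)+\chi(B,B)+2\chi(A,B)$ together with Serre duality reduces the claim to the bound $\hom(A,A)+\hom(B,B)+\hom(B,A)-\hom(E,E)\leq\ext^{1}(A,B)$, and your analysis of $\Phi$ --- kernel $\Hom(B,A)$ because every endomorphism preserves the filtration when $\Hom(A,B)=0$, cokernel injecting into $\Ext^{1}(B,A)$ via the obstruction $(\phi_A)_{*}e-(\phi_B)^{*}e$ --- is exactly the standard exact sequence attached to the extension. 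All the ingredients you use (vanishing of negative and $>2$ Exts for heart objects, $\Ext^1_{\cA}=\Hom_{D^b}(-,-[1])$, bilinearity of Yoneda composition) are valid here, so nothing further is needed.
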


\begin{Lem}\label{simpleext}
Let $\cW \subset \Stab(X)$ be a wall for $\vv$, $\sigma_0 \in \cW$ a generic stability condition, and $\sigma_+$ a stability condition on one of the adjacent chambers. Consider a short exact sequence in $\cA_{\sigma_+}$
\[ 0\rightarrow S \rightarrow E \rightarrow T \rightarrow 0, \]
where $S$ and $T$ are $\sigma_0$-stable of the same phase, and $v(E)=\vv$. Assume that $\phi_{\sigma_+}(S)<\phi_{\sigma_+}(E)<\phi_{\sigma_+}(T)$, and $\Hom(T,E)=0$, then $E$ is $\sigma_+$-stable. 
\end{Lem}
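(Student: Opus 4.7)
I would reduce the verification of $\sigma_+$-stability of $E$ to an analysis inside the abelian slice $\cP_0(\phi_0)$, where $\phi_0 := \phi_{\sigma_0}(S) = \phi_{\sigma_0}(T)$ and $\cP_0$ is the slicing of $\sigma_0$. The distinguished triangle $S\to E\to T\xrightarrow{+1}$ places $E$ in $\cP_0(\phi_0)$ as an extension of $T$ by $S$ in this abelian category of finite length, and both $S$ and $T$ are simple there since they are $\sigma_0$-stable of phase $\phi_0$. By openness of stability, $S$ and $T$ remain $\sigma_+$-stable once $\sigma_+$ is sufficiently close to $\sigma_0$; and by Proposition~\ref{walls} the $\sigma_+$-stability of $E$ is constant on the chamber, so I can freely impose such a closeness assumption.

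The crucial reduction is that, for such $\sigma_+$, the $\sigma_+$-Harder--Narasimhan factors of $E$ all lie in $\cP_0(\phi_0)$: their Mukai vectors sit in the wall-lattice $\cH$ by part~(3) of the proposition on $\cH$, and proximity to $\sigma_0$ keeps any $\sigma_+$-semistable subquotient of $E$ whose class lies in $\cH$ $\sigma_0$-semistable of phase $\phi_0$. Consequently it is enough to check that every proper nonzero subobject $A \subsetneq E$ inside the abelian category $\cP_0(\phi_0)$ satisfies $\phi_{\sigma_+}(A) < \phi_{\sigma_+}(E)$.

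I then classify such subobjects. For $A \hookrightarrow E$ in $\cP_0(\phi_0)$, consider the composition $f\colon A\to E\twoheadrightarrow T$. Since $T$ is simple, $\im f$ is either $0$ or $T$. If $f=0$, then $A\hookrightarrow S$, and simplicity of $S$ forces $A=0$ or $A=S$. If $\im f = T$, then $\ker f = A\cap S$ equals $0$ or $S$ by simplicity of $S$: the case $A\cap S = 0$ gives $A\cong T$ and hence a nonzero morphism $T\to E$ in $D^b(X)$, contradicting $\Hom(T,E)=0$; the case $A\cap S = S$ forces $A=E$ by the five lemma applied to the induced map between $(0\to S\to A\to T\to 0)$ and $(0\to S\to E\to T\to 0)$. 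Thus the only proper nonzero subobject of $E$ in $\cP_0(\phi_0)$ is $S$ itself, and the hypothesis $\phi_{\sigma_+}(S)<\phi_{\sigma_+}(E)$ concludes the proof. The main obstacle is justifying the reduction step, which rests on the wall-lattice description of HN factors near a generic wall.
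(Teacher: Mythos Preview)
Your proof is correct and rests on the same core observation as the paper's: inside the finite-length abelian category $\cP_0(\phi_0)$, the object $E$ has length two with simple factors $S,T$, and $\Hom(T,E)=0$ forces $S$ to be its unique proper nonzero subobject. The organization differs slightly. You first invoke the wall-lattice proposition to place all $\sigma_+$-HN factors of $E$ into $\cP_0(\phi_0)$, and then run an explicit subobject classification there. The paper instead starts from an arbitrary $\sigma_+$-\emph{stable} destabilizing subobject $A\hookrightarrow E$ in $\cA_{\sigma_+}$, uses $\sigma_+$-stability of $S$ and $T$ and Hom-vanishing to squeeze $\phi_{\sigma_+}(S)<\phi_{\sigma_+}(A)<\phi_{\sigma_+}(T)$, and from this phase bound concludes $A\in\cP_0(\phi_0)$, hence $A\in\{S,T\}$. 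The paper's route is marginally more self-contained (it does not cite the lattice proposition, only stability of $S,T$ near the wall), while yours makes the reduction to $\cP_0(\phi_0)$ more explicit and the final classification cleaner; both rely on the same ``closeness'' step you correctly flag as the main thing requiring care.
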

\begin{proof}
Assume $E$ is not $\sigma_+$-stable, and consider a stable destabilizing subobject $A \hookrightarrow E$ in $\cA_{\sigma_+}$. By assumption we have 
$\phi_{\sigma_+}(A) > \phi_{\sigma_+}(E) > \phi_{\sigma_+} (S).$
If $\phi_{\sigma_+}(A) \geq \phi_{\sigma_+}(T)$ we would get $\Hom(A,T)=0$ by stability. Then, the morphism $A \hookrightarrow E$ would factor via $S$, but $\Hom(A,S)=0$ by stability. So we have 
\[ \phi_{\sigma_+}(S) < \phi_{\sigma_+}(A) < \phi_{\sigma_+}(T). \]
This implies that $A$ is $\sigma_0$-semistable of the same $\sigma_0$-phase as $S$ and $T$. Since $S$ and $T$ are simple objects in the abelian category of $\sigma_0$-semistable objects of their phase (i.e. they do not have proper subobjects), we see that $A=S$ or $A=T$. The first case contradicts $\phi(A) > \phi(E)$ and the second one $\Hom(T,E)=0$.
\end{proof}

\subsection{Spherical Mukai vector}\label{subsec:spherical}
Here we prove part $(1)$ of Theorem \ref{semirigid1}. Fix a Mukai vector $\vv$ with $\vv^2=-2$. Given a $\sigma_+$-stable spherical object $E$ with $v(E)=\vv$, we want to construct a $\sigma_-$-stable spherical object $E'$ with the same Mukai vector. The idea is to deform the stability condition $\sigma_+$ to a generic stability condition $\sigma_0$ on the wall $\cW$ and take the Jordan-Holder filtration of $E$. It turns out (Proposition \ref{jhforspherical}) that $E$ has only two Jordan-Holder factors, although they can appear multiple times. Call this two Jordan-Holder factors $S$ and $T$, and their classes $\ss$ and $\tt$. They are $\sigma_0$-stable spherical objects, so they are $\sigma_-$-stable too, since the condition that an object is stable is open in $\Stab(X)$. To construct the desired object $E'$, we will construct inductively, starting from $S$ and $T$, a $\sigma_-$-stable spherical object with class $\vv'$ for every spherical $\vv'$ that is a linear combination of $\ss$ and $\tt$ with positive coefficients. Since $E$ has a Jordan-Holder filtration with factors $S$ and $T$, its class $\vv$ is of that form.

\begin{Prop}\label{jhforspherical}
Let $E \in M_{\sigma_{+}}(\vv)$. Assume that it is not stable on the wall. Then there are two $\sigma_{0}$-stable spherical objects that appear as Jordan-Holder factors of $E$, possibly with multiplicity.
\end{Prop}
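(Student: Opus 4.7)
Write $\vv = \sum_{i=1}^k a_i \ss_i$, where $\ss_i = v(S_i) \in \cH$ and $S_1,\ldots,S_k$ are the distinct $\sigma_0$-stable Jordan--H\"{o}lder factors of $E$ with multiplicities $a_i \geq 1$. My plan has two parts: first show each $S_i$ is spherical, then use the rank-$2$ structure of $\cH$ (Definition~\ref{def:lattice}) to force $k\leq 2$.

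\textbf{Step 1: every $S_i$ is spherical.} Since $E$ is $\sigma_+$-stable with $\vv^2=-2$, it is simple with $\ext^1(E,E)=0$. I would extract pieces of $E$ iteratively by taking successive maximal isotypic subobjects: choose an ordering of the distinct simples, let $P_1 \subset E$ be the largest subobject all of whose JH factors are copies of $S_{\pi(1)}$, then inside $E/P_1$ repeat with another simple, and so on. By maximality the socle of each successive quotient contains no copies of the previously extracted simple, which forces $\Hom(P_j,\,\text{subsequent quotient})=0$; iterated application of Mukai's Lemma~\ref{Mukailemma} gives
\[
0 \;=\; \ext^1(E,E) \;\geq\; \sum_j \ext^1(P_j, P_j),
\]
so every piece $P_j$ is rigid. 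Each $P_j$ is an iterated self-extension of some $b\geq 1$ copies of a single $S_i$, and K3 Serre duality yields $\chi(P_j,P_j)=2\hom(P_j,P_j)=b^2(2-e_{ii})$, where $e_{ii}:=\ext^1(S_i,S_i)$. Since $\id_{P_j}\neq 0$ and $e_{ii}$ is a non-negative even integer, this forces $e_{ii}=0$, i.e.\ $S_i$ is spherical.

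\textbf{Step 2: pairwise intersection $\geq 2$.} By Lemma~\ref{signature}, $\cH$ is hyperbolic or negative semi-definite, and cannot be negative definite (in that case every spherical in $\cH$ would remain $\sigma_0$-stable by the remark after Lemma~\ref{signature}, contradicting the hypothesis on $E$). For distinct spherical classes $\ss_i,\ss_j\in \cH$ both of positive $Z_0$-phase, I claim $(\ss_i,\ss_j)\geq 2$. They are linearly independent (otherwise $\ss_i=\pm\ss_j$, and the half-plane condition forces $\ss_i=\ss_j$). If $m=(\ss_i,\ss_j)\in\{0,1\}$, their Gram matrix $\bigl(\begin{smallmatrix}-2 & m\\ m & -2\end{smallmatrix}\bigr)$ has trace $-4$ and positive determinant $4-m^2$, so it is negative definite; since $\Span_{\Z}(\ss_i,\ss_j)$ has finite index in $\cH$, this would force $\cH$ to be negative definite, contradiction.

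\textbf{Step 3: conclude $k=2$.} If $k\geq 3$, the three classes $\ss_1,\ss_2,\ss_3$ are linearly dependent in the rank-$2$ lattice $\cH$, so the $3\times 3$ Gram determinant vanishes and expands to
\[
e_{12}^2 + e_{13}^2 + e_{23}^2 + e_{12}\,e_{13}\,e_{23} \;=\; 4.
\]
With each $e_{ij}\geq 2$ by Step~2, the left-hand side is at least $12+8=20$, contradiction. Hence $k\leq 2$, and strict $\sigma_0$-semistability of $E$ gives $k=2$. The technical heart of the argument is Step~1: the iterated extraction of maximal isotypic subobjects must be arranged so that Mukai's Lemma applies inductively, and the key observation making this rigorous is that maximality of each $P_j$ forces the socle of the subsequent quotient to miss the corresponding simple. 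Once each $S_i$ is spherical, the lattice arithmetic in Steps~2--3 is elementary.
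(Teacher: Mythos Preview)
Your proof is correct and takes a genuinely different route from the paper's. One minor imprecision in Step~1: with a \emph{predetermined} ordering of the simples it can happen that $S_{\pi(j)}$ is absent from the socle of the current quotient, giving $P_j=0$ and no information about that simple. The easy fix is to choose at each step a simple actually lying in the socle (so $P_j\neq 0$), and iterate until the quotient vanishes; since each successive quotient is again rigid by Mukai's Lemma, every distinct $S_i$ eventually appears as the type of some $P_j$ and is therefore spherical. With this adjustment Step~1 is fine---and in fact more carefully argued than the paper, which simply asserts that Mukai's Lemma forces all Jordan--H\"older factors to be spherical.

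The substantive difference is in how you bound the number of distinct factors. The paper first exhibits two linearly independent spherical factors $S,T$ and then splits according to the signature of $\cH$: in the semi-definite case $(\ss,\tt)=2$ and the spherical classes lie on two parallel half-lines, so a third stable spherical class of the correct phase is impossible; in the hyperbolic case $(\ss,\tt)\geq 3$ and a direct computation with the constraints $(\ss,\rr)\geq 0$, $(\tt,\rr)\geq 0$, $\rr^2=-2$ rules out a third class. Your argument is uniform: you show every pairwise product is $\geq 2$ (else the span would be negative definite, contradicting the Remark after Lemma~\ref{signature}), and then the vanishing of the $3\times 3$ Gram determinant in rank~$2$ gives the single equation $e_{12}^2+e_{13}^2+e_{23}^2+e_{12}e_{13}e_{23}=4$, which is impossible. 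This is slicker and avoids the case split. The trade-off is that the paper's case analysis also produces the explicit picture of the spherical classes in $\langle\ss,\tt\rangle$ (Remark~\ref{twospherical}), which it immediately uses to set up the inductive construction in the proof of Theorem~\ref{semirigid1}(1); your argument gives the proposition cleanly but does not by itself supply that structure.
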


\begin{proof}
Assume that $E$ gets destabilized. From Lemma \ref{Mukailemma} it follows that its Jordan-Holder factors with respect to $\sigma_{0}$ are all spherical. Since $\vv$ is primitive, it must have at least two different factors $S,T$, call their classes $\ss,\tt$. Since $S,T$ are stable and non isomorphic we have $(s,t)=\ext^{1}(S,T) \geq 0$. This in turn implies that $\ss$ and $\tt$ are linearly independent over $\R$. Indeed, if we could write $s= \lambda t$, then $\lambda$ would be positive, because $S$ and $T$ have the same $\sigma_0$-phase, hence $(\ss,\tt)=-2\lambda <0$. The argument to show that these are the only Jordan-Holder factors is different in the case when $\cH$ is semi-definite and in the case when it is hyperbolic.

{\textbf{Semi-definite case.}} 
From the linear independence it follows that $(\ss,\tt)=2$. The spherical classes of $\sigma_0$-stable objects of the same phase as $E$ lie on two parallel half-lines, as shown in Figure \ref{lattices}(A). Furthermore, the product of two classes is positive if and only if they lie on different lines. We conclude that, up to shifts, $S$ and $T$ are the only two $\sigma_{0}$-stable spherical objects with classes in $\cH$.

{\textbf{Hyperbolic case.}} 
In this case we have $m:=(\ss,\tt) \geq 3$. Then, by the following argument from \cite{mmp}, we see again that, up to shifts, $S$ and $T$ are the only two $\sigma_{0}$-stable spherical objects. Assume $\rr$ is the class of another $\sigma_{0}$-stable spherical object, we can write $\rr=x\ss+y\tt$. We see that
\begin{align*}
    (\ss,\rr) \geq 0 &\implies y \geq \frac{2x}{m} \\
    (\tt,\rr) \geq 0 & \implies y \leq \frac{mx}{2} \\
    (\rr,\rr)=-2 & \implies -2x^{2}+2mxy-2y^{2}=-2,
\end{align*}
which is easily seen to be contradictory. 
\end{proof}

\begin{Rem}\label{twospherical}
Assume that $E$ is $\sigma_+$-stable, spherical and not stable on the wall. Proposition \ref{jhforspherical} gives two spherical classes $\ss,\tt$. They are a basis for $\cH_{\R}$, and the class $\vv=v(E)$ is a linear combination of $\ss$ and $\tt$ with positive coefficients. Writing the quadratic form with respect to the basis $\{\ss,\tt\}$ we get:
\begin{align*}
   & -2x^2+4xy-2y^2 \quad \textrm{in the semi-definite case},\\
   & -2x^2+2mxy-2y^2 \textrm{, with } m:=(\ss,\tt) >2 \quad \textrm{in the hyperbolic case}.
\end{align*}
In both cases there are infinitely many spherical classes in the lattice $\langle \ss,\tt \rangle$ spanned by $\ss$ and $\tt$. In the hyperbolic case there are no isotropic classes, because $\sqrt{m^2-4}$ is irrational if $m\geq 3$. The spherical classes live on two branches of an hyperbola in the hyperbolic case, and on two parallel lines in the semi-definite case.

\begin{figure}[ht]
\begin{subfigure}{0.49\textwidth}
\includegraphics[width=0.99\linewidth]{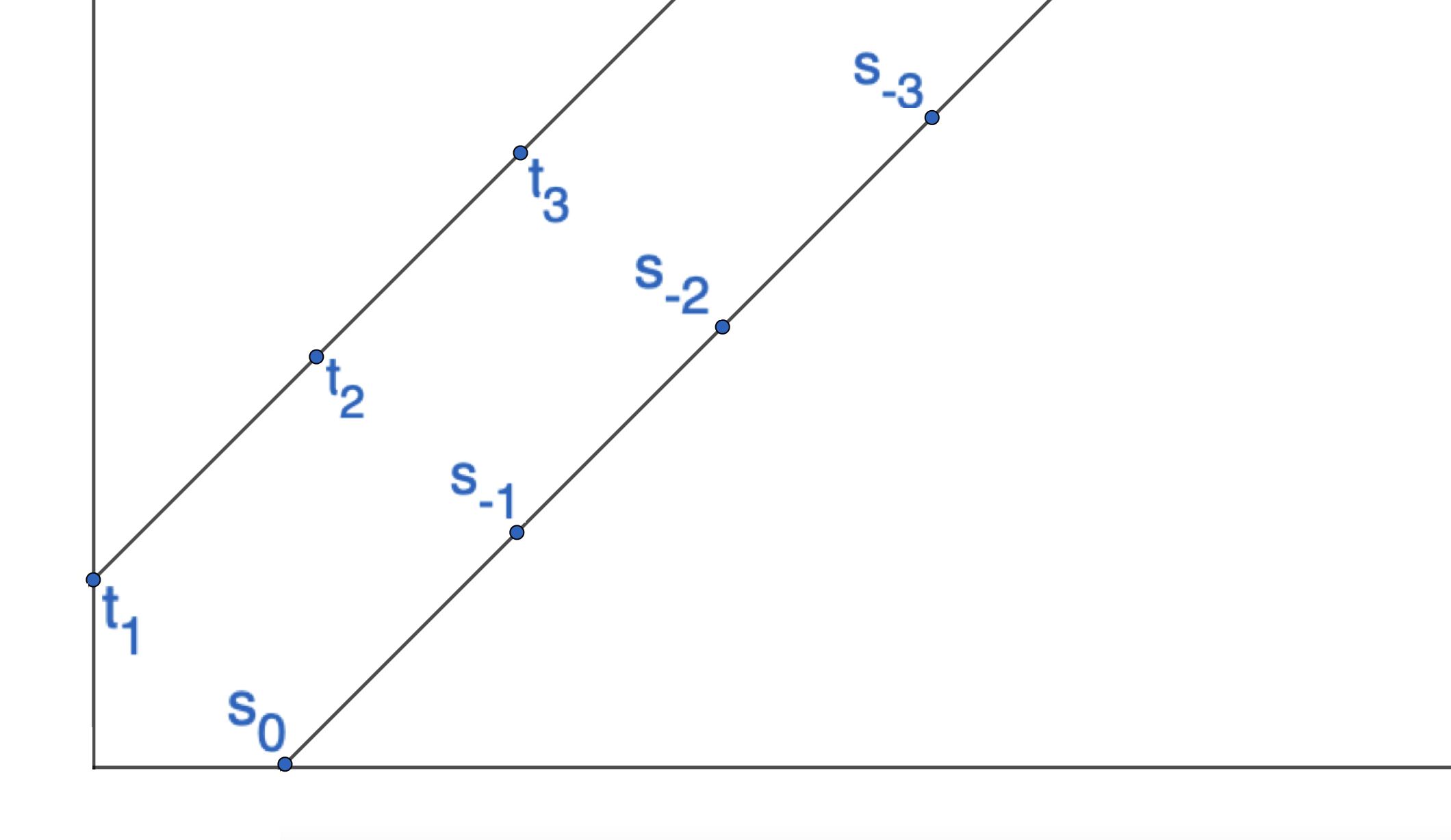}
\caption{Semi-definite case}
\label{fig:semidefLattice}
\end{subfigure}
\begin{subfigure}{0.49\textwidth}
\includegraphics[width=0.92\linewidth]{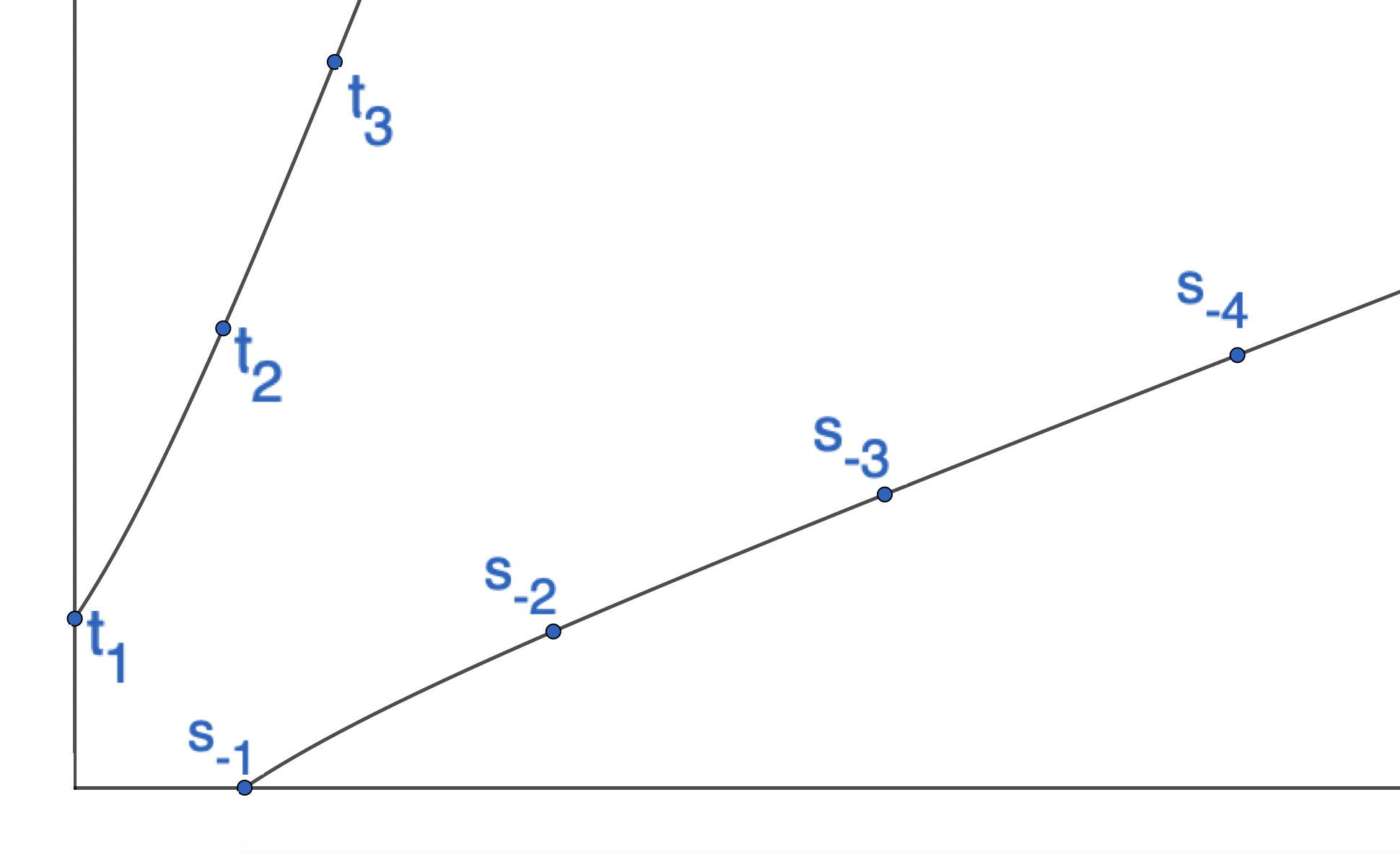}
\caption{Hyperbolic case}
\label{fig:hyperbolicLattice}
\end{subfigure}
\caption{Spherical classes in $\langle \ss,\tt \rangle$}
\label{lattices}
\end{figure}

Assume without loss of generality that $\phi_{\sigma_+}(\tt) > \phi_{\sigma_+}(\ss)$. Consider the spherical classes that are linear combination of $\ss$ and $\tt$ with positive coefficient, ordered with respect to $\sigma_+$ phase. Call $\tt_i$ for $i\geq 1$ the classes on the upper branch, and $\ss_i$ for $i \leq 0$ the classes on the lower branch, as shown in the Figure \ref{lattices}. They can also be defined inductively by 
\[ 
\begin{cases}
\tt_1=\tt\\
\tt_2=\rho_{\tt}(\ss),\\
\tt_{i+1}=-\rho_{\tt_i}(\tt_{i-1}).
\end{cases}
\begin{cases}
\ss_0=\ss\\
\ss_{-1}=\rho_{\ss}(\tt),\\
\ss_{-i-1}=-\rho_{\ss_{-i}}(\ss_{-i+1}).
\end{cases}
\]
where $\rho_{\ss}(\vv):=\vv+(\ss,\vv)\ss$. This is clear in the semi-definite case, because $\tt_i$ is the midpoint of the segment $\overline{\tt_{i-1},\tt_{i+1}}$, and is also easy to see in the hyperbolic case by writing down the previous reflections in coordinates with respect to $\ss$ and $\tt$. 
\end{Rem}

With this we are ready to show the first part of Theorem \ref{semirigid1}.

\begin{proof}[Proof of Theorem \ref{semirigid1}(1)]
Let $E \in M_{\sigma_+}(\vv)$, we want to show that there exists a $\sigma_-$-stable object with Mukai vector $\vv$. Let $\phi$ be the phase of $E$ with respect to $\sigma_0$; we can assume up to shifts that $0<\phi \leq 1$. Assume $E$ is not stable on the wall, otherwise we are done. From Proposition \ref{jhforspherical} and Remark \ref{twospherical} we get that $\vv=\tt_i$ or $\vv=\ss_{-i}$ for some $i$. Assume $\vv=\ss_{-i}$, the other case is analogous. We prove existence of $\sigma_-$-stable objects of class $\ss_{-i}$ by induction on $i$. Lemma \ref{jhforspherical} implies that there is a $\sigma_0$-stable object $S$ of class $\ss_0=\ss$, and a $\sigma_0$-stable object $T$ of class $\tt_1$. Define $S^-_{-i}$ inductively as
\[ S^{-}_{-i-1} := 
\begin{cases}
\ST_{S}(T) & \text{if $i=0$} , \\
\ST_{S^{-}_{-i}}(S_{-i+1}^{-})[-1] & \text{if $i>0$}
\end{cases}
\]
By stability of $S$ and $T$ we have a short exact sequence in $\cP(\phi)$
\[
0 \rightarrow T \rightarrow \ST_{S}(T) \rightarrow \Ext^{1}(S,T) \otimes S \rightarrow 0.
\]
Since $S,T$ are simple in the abelian category $\cP(\phi)$ and $\phi_{-}(T)<\phi_-(S)$, we can apply Lemma \ref{simpleext} and conclude that $S^-_{-1}$ is $\sigma_{-}$-stable. Furthermore, if we take $\sigma_-$ close to the wall, $S$ and $T$ lie in the heart $\cA_{\sigma_-}=\cP_{\sigma_-}(0,1]$, and so does $S^-_{-1}$.

Now, assume by induction that $S^-_{-j}$ is $\sigma_-$-stable for every $j \leq i$, and that it lies in the heart $\cA_{\sigma_-}$. We want to 
show that the same holds for $S^-_{-i-1}$. First we claim that $\RHom(S^{-}_{-i},S^{-}_{-i+1})$ is concentrated in degree zero.
Indeed $S^{-}_{-i},S^{-}_{-i+1}$ are two $\sigma_{-}$-stable objects with $\phi_{\sigma_{-}}(S^{-}_{-i})<\phi_{\sigma_{-}}(S^{-}_{-i+1})$. Therefore $\Hom^{2}(S^{-}_{-i},S^{-}_{-i+1})$ vanishes by stability and Serre duality. From the inductive definition and Serre duality we get
\begin{align*}
\ext^{1}(S^{-}_{-i},S^{-}_{-i+1}) &= \ext^{1}(S^{-}_{-i+1},S^{-}_{-i}) \\
&= \hom(S^{-}_{-i+1},\ST_{S^{-}_{-i+1}}(S^{-}_{-i+2})) \\
&= \hom(\ST_{S^{-}_{-i+1}}^{-1}(S^{-}_{-i+1}),S^{-}_{-i+2})\\
&=\hom(S^{-}_{-i+1}[1],S^{-}_{-i+2})
\end{align*}
which is zero because it is a negative Ext between two objects of a heart. 

This shows that we have the exact triangle
\begin{equation}\label{eq:inductiveTwist}
    S^{-}_{-i-1} \rightarrow \Hom(S^{-}_{-i},S^{-}_{-i+1}) \otimes S^{-}_{-i} \rightarrow S^{-}_{-i+1} \rightarrow S^{-}_{-i-1}[1] 
\end{equation}
Taking the long exact sequence of cohomology with value in the heart $\cA_{\sigma_-}=\cP_{\sigma_-}(0,1]$, we see that $S_{-i-1} \in \cP_{\sigma_-}(0,2]$.
Now let $F$ be a $\sigma_{-}$-stable object with bigger phase $\phi_{\sigma_{-}}(F) > \phi_{\sigma_{-}}(S^{-}_{-i-1})$, we want to show that $\Hom(F,S_{-i-1})=0$, which will prove $\sigma_-$-semistability of $S_{-i-1}$.

Assume that $F \in \cA_{\sigma_{-}}[n]$ with $n>0$. Applying the functor $\Hom(F,-)$ to the triangle \ref{eq:inductiveTwist} we get the exact sequence:
\begin{equation}\label{eq:exactSeqF}
    \Hom(F,S^{-}_{-i+1}[-1]) \rightarrow \Hom(F,S^{-}_{-i-1}) \rightarrow \Hom(S^{-}_{-i},S^{-}_{-i+1}) \otimes \Hom(F,S^-_{-i}). 
\end{equation}
By induction hypotesis $S^-_{-i+1}$ and $S^-_{-i}$ lie in the heart $\cA_{\sigma_-}$. the first and the third terms vanish because they are negative Ext between objects of a heart, so the middle one does too. 

If $F \in \cA_{\sigma_-}$ is an object of the heart with bigger $\sigma_{-}$-phase, then its class $\ff$ lies under the lower branch of the hyperbola in Figure \ref{lattices}(B), in particular, outside of the closed positive cone, so it has negative square $\ff^2<0$. Since $F$ is stable, it must be spherical, because $\ff^2 \geq -2$. So $F \in \{S_{0},\dots,S^{-}_{-i}\}$. If $F \{S_{0},\dots,S^{-}_{-i+1}\}$ we conclude by induction and the exact sequence \ref{eq:exactSeqF}, so the only case to check is $F=S^{-}_{-i}$. We have
\[\Hom(S_{-i},\ST_{S_{-i}^{-}}(S^{-}_{-i+1})[-1])=\Hom(S^{-}_{-i}[2],S^{-}_{-i+1}) \]
which is again zero because it is a negative Ext between objects of a heart. This shows that $S_{-i-1}$ is $\sigma_{-}$-semistable, since $\phi_{\sigma_-}(T) < \phi_{\sigma_-}(S_{-i-1})< \phi_{\sigma_-}(S)$ it also lies in $\cA_{\sigma}$. Now to show that is $\sigma_-$-stable, consider its Jordan-Holder filtration. Every factor must be a spherical object of the same phase, by Mukai's Lemma. Since the line connecting the origin to $\vv$ meets the lower branch of the hyperbola only in $\vv$, there is only one $\sigma_-$-stable spherical object of that phase up to shifts, so the Jordan-Holder filtration is trivial.
\end{proof}

\subsection{Isotropic Mukai vector}\label{subsec:isotropic}
Now we turn our attention on moduli spaces $M_{\sigma_+}(\vv)$ with vector $\vv$ with $\vv^2=0$. The lattice $\cH$ can be negative semi-definite or hyperbolic, and in the latter case there is only one spherical class up to sign. Indeed, if there were two linearly independent spherical classes, the argument in Remark \ref{twospherical} would show that there could be no isotropic classes. In contrast to the spherical case, where the proof works the same in both cases, if $\vv$ is isotropic the signature of the lattice $\cH$ matters. In Proposition \ref{jhforisotropic} we describe the Jordan-Holder filtration of a stable object $E$ with $v(E)=\vv$ with respect to $\sigma_0$. This result is analogous to \cite[Theorem $12.1$]{bridgeland_stability_2006}, where Bridgeland studies wall-crossing for $\vv=(0,0,1)$ and $\cW$ a wall on the boundary of the geometric chamber $U(X)$. In fact, assuming non-emptiness of moduli spaces with isotropic vector, Proposition \ref{jhforisotropic} follows from Bridgeland's result via a Fourier-Mukai argument, as shown in \cite[Lemma $8.1$]{mmp}.

\begin{Prop}\label{jhforisotropic}
Keeping notation as above we have:
\begin{enumerate}
    \item If $\cH$ is semi-definite then there is a smooth rational $C$ curve inside $M_{\sigma_{+}}(\vv)$ that becomes $\sigma_-$-unstable, and the Jordan-Holder filtration for $E \in C$ with respect to $\sigma_0$ is of the form 
    \[ 0 \rightarrow S \rightarrow E \rightarrow T \rightarrow 0,\]
    where $S$ and $T$ are two $\sigma_0$-stable spherical objects.
    \item If $\cH$ is hyperbolic the wall $\cW$ is totally semistable, i.e. every object $E \in M_{\sigma_+}(\vv)$ becomes $\sigma_0$-semistable. The Jordan-Holder filtration of an object $E \in M_{\sigma_+}(\vv)$ is \[ 0 \rightarrow S^{\oplus a} \rightarrow E \rightarrow F \rightarrow 0 \quad \textrm{ or } \quad  0  \rightarrow F \rightarrow E \rightarrow S^{\oplus a} \rightarrow 0, \]
    where $S$ is a $\sigma_0$-stable spherical object, and $F$ is a $\sigma_0$-stable isotropic object.
\end{enumerate}
Moreover, in both cases the Jordan-Holder filtration with respect to $\sigma_0$ coincides with the Harder-Narasimhan filtration with respect to $\sigma_-$.
\end{Prop}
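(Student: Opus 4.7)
My plan is to analyze the $\sigma_0$-Jordan-Holder filtration of a $\sigma_+$-stable object $E\in M_{\sigma_+}(\vv)$ that becomes strictly $\sigma_0$-semistable, constraining the possible factor types via Mukai's Lemma and enumerating them from the arithmetic of the lattice $\cH$; the two cases of the statement then fall out from the signature of $\cH$.

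The first step is to apply Mukai's Lemma (Lemma \ref{Mukailemma}) iteratively to the Jordan-Holder filtration. Using the vanishing of $\Hom$ between distinct $\sigma_0$-stable objects of the same phase, and grouping the filtration by distinct factors $B_i$ with multiplicities $n_i$, one obtains
\[
2 \;=\; \ext^1(E,E) \;=\; \vv^2+2 \;\geq\; \sum_i n_i^2\,\ext^1(B_i,B_i).
\]
Since each stable $B_i$ has $\ext^1(B_i,B_i)\in\{0,2\}$ (spherical resp.\ isotropic), at most one isotropic factor appears, with multiplicity one, and every other factor is spherical.

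In the semi-definite case, the radical of $\cH$ is $\R\vv$, so every isotropic class in $\cH$ is a rational multiple of $\vv$; a proper isotropic Jordan-Holder factor is therefore impossible, and all factors are spherical. Writing $\vv=\sum n_i\ss_i$, the combination of $\vv^2=0$ and primitivity of $\vv$ forces exactly two distinct factors $\ss,\tt$ with $(\ss,\tt)=2$ and $\vv=\ss+\tt$, each with multiplicity one. To produce the rational curve I would compute $\ext^1(T,S)=(\tt,\ss)=2$ via Serre duality together with the vanishing of $\Hom$ between distinct $\sigma_0$-stable objects of the same phase; then $\mathbb{P}(\Ext^1(T,S))\cong\mathbb{P}^1$ parameterizes non-split extensions, each $\sigma_+$-stable by Lemma \ref{simpleext}, and the induced morphism to $M_{\sigma_+}(\vv)$ is injective because the $\Aut(S)$- and $\Aut(T)$-actions on $\Ext^1(T,S)$ are both by scalars. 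In the hyperbolic case, the argument of Remark \ref{twospherical} shows $\cH$ contains at most one spherical class up to sign and no pair of spherical classes summing to an isotropic vector; combined with the first step this forces exactly one isotropic factor $F$ (multiplicity one) together with copies of a single spherical class $\ss$, and $\vv=a\ss+\ff$ with $a=(\ss,\ff)$. To show that the wall is \emph{totally} semistable I would apply a Fourier-Mukai transform sending $\ff$ to the class $(0,0,1)$ on a K3 partner $Y=M_{\sigma_0}(\ff)$, which maps our wall to a wall on the boundary of the geometric chamber $U(Y)$; Bridgeland's \cite[Theorem 12.1]{bridgeland_stability_2006} then yields total semistability for every point sheaf on $Y$, and hence for every element of $M_{\sigma_+}(\vv)$.

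Finally, to identify the $\sigma_0$-Jordan-Holder filtration with the $\sigma_-$-Harder-Narasimhan filtration, each stable factor remains $\sigma_-$-stable by openness of stability (Proposition \ref{walls}), and crossing the wall strictly separates the $\sigma_-$-phases of the two distinct factors. The HN filtration then gathers all copies of a given factor into a single step, yielding the displayed short exact sequence with orientation determined by the sign of the phase difference. The main obstacle I expect is the total semistability in the hyperbolic case: Mukai's Lemma only constrains the filtration once $E$ is assumed destabilized and does not itself force destabilization. The Fourier-Mukai reduction is the cleanest route, but it depends on having the isotropic Fourier-Mukai partner $M_{\sigma_0}(\ff)$ available as a K3 surface, which in turn requires the statement of the Main Theorem for isotropic classes to have been established at a prior stage of the overall inductive argument.
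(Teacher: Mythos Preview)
Your analysis of Part~(1) and of the Jordan--H\"older structure in Part~(2) follows the paper's proof closely and is correct; likewise the identification of the $\sigma_0$-Jordan--H\"older filtration with the $\sigma_-$-Harder--Narasimhan filtration.

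The gap is exactly the one you flag at the end: your proposed proof of total semistability in the hyperbolic case via a Fourier--Mukai transform to $Y=M_{\sigma_0}(\ff)$ is circular in the logical architecture of this paper. The paper explicitly remarks (just before the statement of the Proposition) that this reduction to Bridgeland's \cite[Theorem~12.1]{bridgeland_stability_2006} is available \emph{provided} one already knows non-emptiness of isotropic moduli spaces---and Proposition~\ref{jhforisotropic} is precisely one of the ingredients used to establish that (via Corollary~\ref{yoshiokaez} and Theorem~\ref{Yoshioka_semirigido}). So this route is not admissible here, and you need a direct argument.

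The paper supplies one. Starting from a single destabilized $E$ with Jordan--H\"older filtration $0\to S^{\oplus a}\to E\to F\to 0$ (the other orientation is symmetric), apply $\Hom(S,-)$ to obtain $\hom(S,E)=a$ and the exact sequence
\[
0 \to \Ext^1(S,E) \to \Ext^1(S,F) \to \Hom(S,S)^{\oplus a} \to \Hom(E,S),
\]
whose last term vanishes by $\sigma_+$-stability. Since $a=(\ss,\ff)=\ext^1(S,F)$, the middle map is an isomorphism, so $\Ext^1(S,E)=0$ and $(\ss,\vv)=-\chi(S,E)=-a<0$. This is a numerical constraint on the \emph{class} $\vv$: for any $E'\in M_{\sigma_+}(\vv)$, Serre duality and $\sigma_+$-stability give $\Ext^2(S,E')=\Hom(E',S)^\vee=0$, hence $\hom(S,E')\geq -(\ss,\vv)=a>0$, and the resulting nonzero map $S\to E'$ destabilizes $E'$ on the wall. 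This elementary computation is the missing idea in your sketch; with it, no Fourier--Mukai partner is needed and the circularity disappears.
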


\begin{proof}
We begin by proving part $(1)$. Since $\cH$ is a negative semi-definite lattice of rank two, the isotropic classes in $\cH_{\R}$ form a one dimensional subspace, which is the radical of the Mukai pairing. Therefore there is at most one, up to a sign, primitive isotropic class in $\cH$. Assume that $E \in M_{\sigma_+}(\vv)$ becomes semistable. From Lemma \ref{Mukailemma} it follows that its Jordan-Holder factors are spherical and isotropic, with at most one being isotropic. 

Since there is just one primitive isotropic class, this means that all the Jordan-Holder factors are spherical, in particular there are two distinct $\sigma_0$-stable spherical objects $S,T$. The only isotropic class is $\ss+\tt$, hence the Jordan-Holder filtration is 
\[ 0 \rightarrow S\rightarrow E \rightarrow T \rightarrow 0,\]
where we assume $\phi_{\sigma_+}(S)<\phi_{\sigma_+}(T).$
We have $\ext^{1}(S,T)=(\ss,\tt)=2$, and every non trivial extension gives a $\sigma_+$-stable object by Lemma \ref{simpleext}. So there is a rational curve $\P(\Ext^1(S,T)) \subset M_{\sigma_+}(\vv)$ of objects that become semistable on the wall. Notice also that the Jordan-Holder filtration with respect to $\sigma_0$ coincides with the Harder-Narasimhan filtration with respect to $\sigma_-$, because $S,T$ are $\sigma_-$-stable with $\phi_{\sigma_-}(S)>\phi_{\sigma_-}(T)$.

Now we prove part $(2)$. First we show the second part of the statement, so let $E$ be $\sigma_0$-semistable. Lemma \ref{Mukailemma} implies that the only objects that can appear as Jordan-Holder factors are spherical and isotropic, with at most one being isotropic. Furthermore, from the discussion in Remark \ref{twospherical}, we see that if $\cH$ contains an isotropic class, then it contains at most one spherical class up to a sign. Therefore there is a unique $\sigma_0$-stable spherical object of the same phase as $E$. Hence all the Jordan-Holder spherical factors are of the form $S^{\oplus a}$. This implies that the Jordan-Holder filtration is of the form
\[ 0 \rightarrow S^{\oplus a} \rightarrow E \rightarrow F \rightarrow 0 \quad \textrm{ or } \quad  0  \rightarrow F \rightarrow E \rightarrow S^{\oplus a} \rightarrow 0, \]
with $F$ isotropic and $\sigma_0$-stable. Which one it is depends on the ordering of the phases: it is the first one if $\phi_{\sigma_+}(S)<\phi_{\sigma_+}(E)$ and the second one if $\phi_{\sigma_+}(E)>\phi_{\sigma_+}(S)$.

As in the previous case, since $S$ and $F$ are $\sigma_0$-stable, they are also $\sigma_-$-stable, so the Harder-Narasimhan filtration coincides with the Jordan-Holder filtration on the wall.

To show that the wall is totally semistable we argue as follows. We have \[\vv^{2}=0=-2a^{2}+2a(\ss,\ww),\] hence $a=(\ss,\ww)$. The spaces $\Hom(S,F)$ and $\Hom(F,S)$ vanish for $\sigma_{0}$-stability, hence $a=\ext^{1}(S,F)$. Applying $\Hom(S,-)$ to the Jordan-Holder filtration we see that $\hom(S,E)=a$ and we get the exact sequence: 
\begin{equation}\label{exactseq}
0\rightarrow \Ext^{1}(S,E) \rightarrow \Ext^{1}(S,F) \rightarrow \Hom(S,S)^{\oplus a} \rightarrow \Hom(E,S).
\end{equation}
By $\sigma_+$-stability the last space is $0$, which implies that $\Ext^{1}(S,F) \cong \Hom(S,S)^{\oplus a}$ because they have the same dimension. Therefore $\Ext^{1}(S,E)=0$, and $\RHom(S,E)=\Hom(S,E)$. This implies $(\ss,\vv)=-\hom(S,E)=-a<0$. In particular for every object $E' \in M_{\sigma_+}(\vv)$ there are non zero morphisms $\Hom(S,E) \neq \emptyset$, so every $E \in M_{\sigma_+}(\vv)$ is $\sigma_0$-semistable.
\end{proof}

We can finish the proof of the main theorem of this section. 

\begin{proof}[Proof of Theorem \ref{semirigid1}(2)]
We separate the proof in two cases, depending on the signature of $\cH$. If $\cH$ is negative semi-definite, we want to show that the spherical twist $\ST_S$ induces an isomorphism $M_{\sigma_+}(\vv) \xrightarrow{\sim} M_{\sigma_-}(\vv)$, where $S$ is the spherical object of Proposition \ref{jhforisotropic}. If $\cH$ is hyperbolic, we want to show that $\ST_S^{\pm 2}$ induces an isomorphism $M_{\sigma_+}(\vv) \xrightarrow{\sim} M_{\sigma_-}(\vv)$, where $S$ is the spherical object of Proposition \ref{jhforisotropic} and the sign depends on the ordering of the phases $\phi_{\sigma_+}(S)$ and $\phi_{\sigma_+}(E)$. 

\textbf{Semi-definite case.} Consider the destabilizing spherical object $S$ of Proposition \ref{jhforisotropic}. We claim that
\begin{enumerate}
    \item If $E \in M_{\sigma_+}(\vv)$ remains stable on the wall, then $\ST_S(E)=E$,
    \item If $E \in M_{\sigma_+}(\vv)$ becomes semistable on the wall, then $\ST_S(E)$ is $\sigma_-$-stable.
\end{enumerate}
To show $(1)$ observe that if $E$ remain stable on the wall, then $\sigma_0$-stability gives $\Hom(E,S)=\Hom(S,E)=0$. Since $(\ss,\vv)=0$, we also get $\RHom(S,E)=0$. It follows from the definition of spherical twist that then $\ST_{S}(E)=E$.

To show $(2)$, consider the Jordan-Holder filtration 
\[0 \rightarrow S \rightarrow E \rightarrow T \rightarrow 0.\]
Applying $\Hom(S,-)$ to the Jordan-Holder filtration we see that $\Hom(S,E) \cong \Hom(S,S)$ is one dimensional. By $\sigma_{+}$-stability we get $\Hom(E,S)=0$, and from $(\ss,\vv)=0$ we see $\ext^{1}(S,E)=1$. The definition of spherical twist gives a distinguished triangle:
\[ S \oplus S[-1] \rightarrow E \rightarrow \ST_{S}(E) \rightarrow S[1] \oplus S \]
Taking the long exact sequence with respect to the heart $\cA_{\sigma_{0}}$ we get the two short exact sequences:
\begin{align*}
    &0\rightarrow S \rightarrow E \rightarrow R \rightarrow 0\\
    &0\rightarrow R \rightarrow \ST_{S}(E) \rightarrow S \rightarrow 0
\end{align*}
The first one shows $R=T$ and it is the Jordan-Holder filtration. The second one then becomes $T \rightarrow \ST_{S}(E) \rightarrow S$ which shows that $\ST_{S}(E)$ is $\sigma_{-}$-stable using Lemma \ref{simpleext}. Starting from $F \in M_{\sigma_-}(\vv)$, the filtration is reversed, and the analogous argument shows that $\ST_{S}^{-1}(F)$ is $\sigma_+$-stable. In conclusion, passing to moduli spaces we see that $\ST_S$ induces an isomorphism
$ M_{\sigma_+}(\vv) \xrightarrow{\sim} M_{\sigma_-}(\vv).$

{\textbf{Hyperbolic case.}} Let $E \in M_{\sigma_+}(\vv)$, Proposition \ref{jhforisotropic} gives the Jordan-Holder filtration with respect to $\sigma_0$:
\[ 0 \rightarrow S^{\oplus a} \rightarrow E \rightarrow F \rightarrow 0.\]
Applying $\Hom(S,-)$ to it we get the exact sequence \eqref{exactseq}. The steps in the proof of Proposition \ref{jhforisotropic} also show that the map $S^{\oplus a} \rightarrow E$ in the Jordan-Holder filtration has the universal property of the evaluation map, hence $F$ is canonically isomorphic to $\ST_S(E)$.

Now, we have the two distinguished triangles
\begin{align*}
 &\Hom(S,E) \otimes S \rightarrow E \rightarrow \ST_{S}(E)\rightarrow \Hom(S,E) \otimes S[1]\\
 &\ST_{S}(E) \rightarrow \ST^{2}_{S}(E) \rightarrow \Hom(S,E) \otimes S \rightarrow \ST_{S}(E)[1],
\end{align*}
where the first one is obtained by definition, and the second one applying $\ST_S$ to the first.
Since $\ST_{S}(E) \cong F$, we conclude that $\ST^{2}_{S}(E)$ is $\sigma_{-}$-stable by Lemma \ref{simpleext}. An analogous argument shows that $\ST^{-2}_S$ sends $\sigma_-$-stable objects with vector $\vv$ to $\sigma_+$-stable objects with vectors $\vv$. Passing to moduli spaces we see that $\ST_S^2$ induces an isomorphism
$ M_{\sigma_+} \xrightarrow{\sim} M_{\sigma_-}.$
\end{proof}

\begin{Cor}\label{yoshiokaez}
Let $X$ be a K3 surface, $\vv \in \Halg$ primitive with $\vv^2 =-2$ or $\vv^2=0$. Let $\cW \subset \Stab^{\dagger}(X)$ be a wall, $\sigma_0 \in \cW$ a generic stability condition on the wall, and $\sigma_{\pm}$ generic stability conditions on the adjacent chambers. Then the Main Theorem holds for $M_{\sigma_+}(\vv)$ if and only if it holds for $M_{\sigma_-}(\vv)$. 
\end{Cor}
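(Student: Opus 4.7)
The Main Theorem has two independent assertions, so the plan is to verify each across the wall using the appropriate part of Theorem \ref{semirigid1}. Assertion (1) is about non-emptiness together with the hyperk\"ahler/dimension/deformation type description, and assertion (2), which only applies when $\vv^2\geq 0$, is about the Hodge-isometry property of the Mukai homomorphism. The cases $\vv^2=-2$ and $\vv^2=0$ are handled by slightly different arguments but both rest on Theorem \ref{semirigid1} combined with Proposition \ref{prop:equivalences}.

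In the case $\vv^2=-2$, assertion (2) is vacuous. Theorem \ref{semirigid1}(1) gives one implication of non-emptiness across the wall, and by symmetry the same argument with the roles of $\sigma_+$ and $\sigma_-$ interchanged yields the converse. When the moduli space is non-empty, Corollary \ref{Cor:projectivity} produces a smooth projective symplectic space of dimension $\vv^2+2=0$. To identify it with a single reduced point (i.e.\ with $\mathrm{Hilb}^0$ of a K3), I would argue that two distinct $\sigma_\pm$-stable objects $A,B$ of class $\vv$ would satisfy $\Hom(A,B)=\Hom(B,A)=0$ by stability and Serre duality, while $\chi(A,B)=-(\vv,\vv)=2$ forces $\ext^1(A,B)=-2<0$, a contradiction. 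Hence both moduli spaces consist of a single reduced point, trivially of $\mathrm{K3}^{[0]}$-type.

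In the case $\vv^2=0$, Theorem \ref{semirigid1}(2) supplies a $\sigma_0$-stable spherical object $S$ and an equivalence $\Phi=\ST_S$ or $\Phi=\ST_S^{\pm 2}$ inducing an isomorphism $M_{\sigma_+}(\vv)\xrightarrow{\sim} M_{\sigma_-}(\vv)$. All properties in part (1) (non-emptiness, smoothness, projectivity, dimension, and deformation equivalence to a Hilbert scheme) transfer automatically along this isomorphism. For part (2), I would apply Proposition \ref{prop:equivalences} in the special case $(X',\vv')=(X,\vv)$. The hypothesis needed is that the induced cohomological automorphism $\Phi^H$ fixes $\vv$; recall that $\ST_S$ acts on $\Halg$ as the reflection $\rho_{\ss}(\xx)=\xx+(\ss,\xx)\ss$ in the $(-2)$-class $\ss=v(S)$. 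In the semi-definite subcase, the identity $(\ss,\vv)=0$ is established inside the proof of Theorem \ref{semirigid1}(2), so $\ST_S^H$ itself fixes $\vv$. In the hyperbolic subcase one uses $\ST_S^{\pm 2}$, whose cohomological action is $\rho_\ss^2=\Id$, and so again $\Phi^H(\vv)=\vv$. With this verified, Proposition \ref{prop:equivalences} identifies $\theta_\vv$ on the two sides, so one is a Hodge isometry iff the other is.

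The substantive work has already been carried out in Theorem \ref{semirigid1}, namely producing the spherical object $S$ together with the precise power of the spherical twist that realizes the wall-crossing isomorphism. Given that theorem, the remainder is bookkeeping: for $\vv^2=-2$ one only needs uniqueness of a stable spherical representative, and for $\vv^2=0$ one only needs the elementary verification that the cohomological action of $\ST_S$ (resp.\ $\ST_S^{\pm 2}$) fixes $\vv$ so that Proposition \ref{prop:equivalences} applies. The main conceptual obstacle one might have anticipated, namely comparing the Mukai homomorphisms directly through the wall, is bypassed entirely by the fact that Theorem \ref{semirigid1}(2) produces a genuine derived equivalence rather than just a birational map.
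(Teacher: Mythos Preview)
Your argument follows the paper's approach and is essentially correct: for $\vv^2=-2$ you combine Theorem \ref{semirigid1}(1) with the standard uniqueness argument for stable spherical objects, and for $\vv^2=0$ you feed Theorem \ref{semirigid1}(2) into Proposition \ref{prop:equivalences}. Your extra verification that $\Phi^H(\vv)=\vv$ is harmless but redundant, since Theorem \ref{semirigid1}(2) already asserts that the target moduli space has the \emph{same} Mukai vector $\vv$.

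One point of care: in the $\vv^2=-2$ case you invoke Corollary \ref{Cor:projectivity} to get a smooth projective zero-dimensional space. As the Remark immediately following that corollary warns, projectivity (and irreducibility) there ultimately rests on the Main Theorem for isotropic vectors, so citing it inside the proof of Corollary \ref{yoshiokaez} is circular in the paper's logical order. You do not actually need it: Theorem \ref{thm:symp} already gives a smooth proper algebraic space of dimension $\vv^2+2=0$, and your $\chi$-computation then forces it to be a single reduced point. Replace the reference accordingly and the argument is clean. (Also, to run the $\Hom$-vanishing you should remark, as the paper does, that after a shift both objects lie in the same heart.)
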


\begin{proof}
If $\vv^2=-2$ we have to show that if $M_{\sigma_+}(\vv)$ is a single point, the same is true is $M_{\sigma_-}(\vv)$. Point $(1)$ of Theorem \ref{semirigid1} gives $M_{\sigma_-}(\vv) \neq \emptyset$, so we only have to show uniqueness of stable spherical objects with class $\vv$. Assume that there are two non isomorphic spherical stable objects $E,E'$ with the same vector $\vv$. Up to shift we can assume they are both in the heart of $\sigma_{-}$. By stability, we have $\Hom(E,E')=\Hom^{2}(E,E')=0$. Since they are in the heart, we have $0 \leq \hom^1(E,E')=\vv^{2}=-2$, which is a contradiction. 

If $\vv^2=0$ and primitive, we want to show that if $M_{\sigma_+}(\vv)$ is a K3 surface, and \[ \theta_{\vv}:\vv^{\perp}/\Z \vv \rightarrow H^2(M_{\sigma_+}(\vv),\Z) \] is a Hodge isometry, the same is true for $M_{\sigma_-}(\vv)$. This follows from part $(2)$ of Theorem \ref{semirigid1} combined with Proposition \ref{prop:equivalences}.
\end{proof}

\section{Wall-crossing for the Hilbert Scheme}
\label{subsec:Positivesquare}

In this section we study wall-crossing for the Hilbert scheme of $n$ points on a K3 surface of Picard rank one when the degree is high with respect to the number of points. Of course, this setting is less general than the previous one; nevertheless thanks to the argument in Section \ref{sec:Deformation} we will be able to reduce to this case. Throughout this section we assume $X$ is a K3 surface with $\Pic(X)=\Z \cdot H$ with $H^2=2d$ and $d=k^2(n-1)$, where $k \in \Z,k>1$, the Mukai vector is $\vv=(1,0,1-n)$, and the stability condition is $\sigma_{\alpha,\beta} \in V(X)$. The notation $\sigma_{\alpha,\beta}$ denotes the stability condition $\sigma_{\alpha H,\beta H} \in V(X)$, see Section \ref{subsec:stabilityK3} for the definition. In particular, the heart is $\Coh^{\alpha H,\beta H}(X)$ and the central charge is
\[ Z_{\alpha,\beta}(\vv):=(\vv,\exp(\beta H+i\alpha H))=dr(\alpha^2-\beta^2)+2dc\beta-s+2id(c-r\beta)\alpha.\]
One can check that the heart does not depend on $\alpha$; in this section we will denote it just by $\Coh^{\beta}(X)$.
It is a stability condition for $(\beta,\alpha) \in \R \times \R_{>0} $, provided that $\Im Z(E)\neq 0$ for every spherical torsion-free sheaf $E$. Hence, the domain $V(X)$ is identified with the upper half plane $\R \times \R_{>0}$ with some isolated points removed. The following is the main result of this section.

\begin{Thm}\label{thm:hilbert_scheme}
Let $X$ be a K3 surface with $Pic(X)=\Z \cdot H$ with $H^2=2d$ and $d=k^2(n-1)$ for $k>1$ integer. Then $M_{\sigma_{\alpha,\beta}}(\vv) = \Hilb^n(X)$ for every $\sigma_{\alpha,\beta} \in V(X)$ with $\beta <0$, and $\vv=(1,0,1-n)$.
\end{Thm}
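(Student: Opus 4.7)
The approach combines the large volume limit with a wall-crossing analysis, ultimately reducing to the claim that no wall for $\vv$ meets $V(X)\cap\{\beta<0\}$.

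\textbf{Base case via the Gieseker chamber.} For any fixed $\beta<0$, Theorem \ref{large_volume} applied to the primitive vector $\vv=(1,0,1-n)$ implies that for $\alpha$ sufficiently large the $\sigma_{\alpha,\beta}$-stable objects are precisely the $H$-Gieseker stable sheaves of class $\vv$, up to shift. Since $\Pic(X)=\Z\cdot H$ and $\vv$ has rank one, these are the ideal sheaves of length-$n$ subschemes, so Example \ref{ex:hilbert_scheme} identifies $M_{\sigma_{\alpha,\beta}}(\vv)$ with $\Hilb^{n}(X)$ in the Gieseker chamber. Since the set of $\sigma$-(semi)stable objects is constant on each chamber (Proposition \ref{walls}), the claim reduces to showing that no wall for $\vv$ meets $V(X)\cap\{\beta<0\}$.

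\textbf{The wall equation.} A numerical wall associated to a primitive class $\ww=(r,cH,s)$ not proportional to $\vv$ is characterized by $\Im\bigl(Z_{\alpha,\beta}(\ww)/Z_{\alpha,\beta}(\vv)\bigr)=0$. Using the explicit formula for $Z_{\alpha,\beta}$, a direct computation turns this into
\[
c\bigl(d(\alpha^{2}+\beta^{2})+(n-1)\bigr)=\beta\bigl((n-1)r+s\bigr).
\]
When $c=0$ the equation forces $\ww$ to be a rational multiple of $\vv$, so there is no wall. When $c\neq 0$, the locus is a circle on the $\beta$-axis centered at $\beta_{0}=((n-1)r+s)/(2cd)$; meeting $\{\beta<0\}$ requires $\beta_{0}\leq 0$, and non-emptiness requires the discriminant condition $((n-1)r+s)^{2}\geq 4c^{2}d(n-1)=4c^{2}k^{2}(n-1)^{2}$, where I have substituted $d=k^{2}(n-1)$.

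\textbf{Ruling out numerical walls.} Combined with the Mukai-lattice inequalities $\ww^{2}\geq -2$ and $(\vv-\ww)^{2}\geq -2$, needed for $\ww$ and $\vv-\ww$ to arise as Mukai vectors of $\sigma_{0}$-semistable factors, the discriminant reduces the candidate triples $(r,c,s)$ to a finite list. For each candidate I would argue that no destabilizing short exact sequence $0\to A\to E\to B\to 0$ in $\Coh^{\beta}(X)$ with $v(A)=\ww$, $v(E)=\vv$ can exist. The cleanest subcase is $r=0$ with $A$ a genuine torsion sheaf: $A$ lies in $\cT^{\beta}$ together with $E$ (which, by the base case, is an ideal sheaf), and morphisms in the tilt between two classes in $\cT^{\beta}$ agree with those in $\Coh(X)$, which vanish since a torsion sheaf has no non-zero map to a torsion-free one. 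For $r\geq 2$ and $c>0$, any realization of $B=\vv-\ww$ of the form $F[1]$ with $F$ torsion-free forces the slope inequality $\beta\geq c/(r-1)>0$, contradicting $\beta<0$, while the case $r=1$ gives $\vv-\ww=(0,-cH,\cdot)$ with $-cH$ non-effective, so $B$ cannot be in the heart as a genuine sheaf either.

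\textbf{Main obstacle.} The remaining, most delicate cases are when $A$ or $B$ is a genuine two-term complex with both $\mathcal{H}^{-1}$ and $\mathcal{H}^{0}$ non-zero. Here one must combine the slope bounds defining $\cT^{\beta}$ and $\cF^{\beta}$ with the Hodge index theorem applied to the rank-two lattice $\cH=\langle\vv,\ww\rangle$, which has signature $(1,1)$ whenever a numerical wall exists. The decisive arithmetic input is that under $d=k^{2}(n-1)$ with $k>1$, the integer solutions of the resulting Diophantine inequalities force one of $\ww$, $\vv-\ww$ to violate either a heart-membership condition or the Mukai inequalities. Carrying out this case analysis systematically is the main technical content of the proof.
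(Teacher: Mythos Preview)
Your setup through the Gieseker chamber and the wall equation is correct and matches the paper. Where you diverge is in the strategy for excluding walls: you attempt a direct case analysis on destabilizing classes $\ww$, invoking the inequalities $\ww^{2}\ge -2$, $(\vv-\ww)^{2}\ge -2$ together with the discriminant bound, and then hope to rule out each candidate by heart-membership arguments. As you yourself note, this leaves the two-term-complex cases unresolved, and your claim that the constraints produce a \emph{finite} list of candidate $(r,c,s)$ is not justified (it is not obvious, and in fact the finiteness of actual walls comes from deeper reasons than those inequalities alone).

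The paper avoids all of this with a two-step argument that exploits the arithmetic hypothesis $d=k^{2}(n-1)$ directly.

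\emph{Step 1: a distinguished vertical line where stability is automatic.} On the line $\beta=-\tfrac{1}{k}$ one computes
\[
\Im Z_{t,-1/k}(r,cH,s)=\frac{2dt}{k}(ck+r)\in \frac{2dt}{k}\,\Z_{\ge 0}
\]
for any object of the heart, and $\Im Z_{t,-1/k}(\vv)=\tfrac{2dt}{k}$ is the minimal positive value. Hence in any destabilizing sequence $0\to F\to E\to G\to 0$ one of $F,G$ has $\Im Z=0$, contradicting finiteness of the slopes. This is the exact analogue of ``rank one torsion-free sheaves are automatically Gieseker stable'', transported to a cleverly chosen $\beta$. (One also checks this line avoids the holes in $V(X)$: no spherical class has $ck+r=0$ when $k>1$.)

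\emph{Step 2: every semicircular wall in $\{\beta<0\}$ must cross that line.} After replacing the destabilizing class by a rank-zero class $\uu=(0,cH,s)$ in the same lattice $\cH$ (an elementary trick: add multiples of $\vv$), the wall equation becomes $cd(\alpha^{2}+\beta^{2})-\beta s+c(n-1)=0$ with $c>0$ and $s<0$. Hyperbolicity of $\cH$ gives $\det\begin{pmatrix}\vv^{2}&(\vv,\uu)\\(\vv,\uu)&\uu^{2}\end{pmatrix}<0$, i.e.\ $s<-2k(n-1)c$, and a direct substitution shows this is exactly the condition for the wall circle to meet $\beta=-\tfrac{1}{k}$ at some $\alpha>0$.

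Steps 1 and 2 together give an immediate contradiction, so there are no semicircular walls in $\{\beta<0\}$ and the Gieseker identification persists throughout. This bypasses entirely the delicate analysis of two-term complexes that blocks your approach; the special value $\beta=-\tfrac{1}{k}$ is the key idea you are missing.
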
 

For convenience in this section we work with the slope $\nu_{\alpha,\beta}$, instead of the phase $\phi_{\alpha,\beta}$. It is defined for objects $E \in \Coh^{\beta}(X)$ as 
\[ \nu_{\alpha,\beta}(E):=
\begin{cases}
-\frac{\Re Z_{\alpha,\beta}(\uu)}{\Im Z_{\alpha,\beta}(\uu)}=\frac{dr(\beta^2-\alpha^2)-2dc\beta+s}{2d(c-r\beta)\alpha} & \textrm{if } \Im Z_{\alpha,\beta}(\uu) \neq 0\\
+\infty & \textrm{if } \Im Z_{\alpha,\beta}(\uu) = 0\\
\end{cases}
,\]
where $\uu=v(E)=(r,cH,s)$. 
It is related to the phase via $\phi_{\alpha,\beta}(E)=\frac{1}{\pi}\cot^{-1}(\nu_{\alpha,\beta}(E))$, so it gives the same notion of stability on $\Coh^{\beta}(X)$. 
If $\cW$ is a wall for $\vv$, and $\uu$ is the class of a destabilizing object, we can recover the equation of the wall by $\nu_{\alpha,\beta}(\uu)=\nu_{\alpha,\beta}(\vv)$. Writing $\uu=(r,cH,s)$ and expanding this equation we get
\[ cd(\alpha^2+\beta^2)-\beta(r(n-1)+s)+c(n-1)=0.\]
The following is a well know fact, see \cite[Proposition $3.7$]{macri2020stability}.

\begin{Thm}[Bertram Nested Wall Theorem]
Let $\vv=(r,cH,s) \in \Halg$, with $\vv^2>0$. The walls in $V(X)$ are either semicircles with center in the $\beta$-axis or lines parallel to the $\alpha$-axis. If $r\neq 0$ there is a unique vertical wall at $\beta=\mu(\vv)$, and there are two sets of nested semicircular walls, one on each side of the vertical wall. 
\end{Thm}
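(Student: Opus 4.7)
The plan is to reduce the whole wall structure to the elementary geometry of a pencil of lines through a common point below a parabola. First I would derive the general wall equation: given a potential destabilizing class $\uu=(r',c'H,s')$, the condition $\nu_{\alpha,\beta}(\uu)=\nu_{\alpha,\beta}(\vv)$ expands, after routine algebra, to
\[
da(\alpha^2+\beta^2)+b\beta+e=0,
\]
where $a=rc'-r'c$, $b=sr'-s'r$, $e=cs'-c's$ are the $2\times 2$ minors of the matrix $\bigl(\begin{smallmatrix}r & c & s\\ r' & c' & s'\end{smallmatrix}\bigr)$. In the auxiliary coordinates $(\beta,R)$ with $R:=\alpha^2+\beta^2$ this is a linear equation, so each wall is a \emph{line}, and the physical upper half plane $\alpha>0$ is the region $R>\beta^2$ above the parabola.

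If $a=0$ the wall is vertical. Under the assumption $r\neq 0$, the vanishing $rc'-r'c=0$ forces $\uu$ to lie in the $2$-plane spanned by $\vv$ and $(0,0,1)$; substituting in $b\beta+e=0$ and using the commutativity identity $sr'-s'r=-(rs'-r's)$ one checks directly that $\beta=-e/b=c/r=\mu(\vv)$ independently of the choice of $\uu$. Hence all vertical walls collapse to the single line $\beta=\mu$. For $a\neq 0$, the wall is a semicircle centred at $(-b/(2da),0)$ on the $\beta$-axis.

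The nested structure then comes from the tautological identity
\[
as+bc+er=0,
\]
which is immediate from expanding the minors. Rewriting it as $da\cdot(s/(dr))+b\cdot(c/r)+e=0$ shows that every non-vertical wall, viewed as a line in the $(\beta,R)$-plane, passes through the common focal point
\[
(\beta_\ast,R_\ast)=\Bigl(\frac{c}{r},\,\frac{s}{dr}\Bigr).
\]
A short calculation gives $\beta_\ast^{\,2}-R_\ast=(dc^2-rs)/(dr^2)=\vv^2/(2dr^2)>0$ from the hypothesis $\vv^2>0$, so the focal point lies strictly below the parabola $R=\beta^2$. Since two distinct lines meet at most once, any two distinct wall-lines meet only at this focal point, which lies below the parabola; hence distinct semicircular walls are disjoint in the open upper half plane. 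Moreover, since each wall-line is below the parabola at $\beta=\mu$, the two roots of $da\beta^2+b\beta+e=0$ (the $\beta$-intercepts of the semicircle) lie strictly on the same side of $\mu$. This splits the semicircular walls into the two announced families, and within each family disjointness in the upper half plane together with the common focal point force one semicircle to lie inside the other, which is the nested picture. I expect the main step to be the recognition of the focal-point identity $as+bc+er=0$; once that is in hand, the rest is the elementary plane geometry of a pencil of lines through a point below a parabola.
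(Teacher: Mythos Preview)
The paper does not give its own proof of this statement; it is quoted as a well-known fact with a reference to \cite[Proposition~3.7]{macri2020stability}. So there is nothing to compare against, and the question is simply whether your argument is correct. It is, and it is in fact the standard proof.

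Your derivation of the wall equation $da(\alpha^2+\beta^2)+b\beta+e=0$ with $a=rc'-r'c$, $b=sr'-s'r$, $e=cs'-c's$ is correct, as is the linearisation in the $(\beta,R)$-plane with $R=\alpha^2+\beta^2$. The identity $as+bc+er=0$ is exactly the Laplace expansion of a $3\times 3$ determinant with a repeated row, and it does force every wall-line, vertical or not, through the single point $(\beta_\ast,R_\ast)=(c/r,s/(dr))$; your computation $\beta_\ast^2-R_\ast=\vv^2/(2dr^2)>0$ correctly places this point strictly below the parabola. Disjointness of distinct semicircular walls in $\alpha>0$ then follows immediately.

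The only place where your write-up is a bit compressed is the final nestedness claim. Disjointness alone does not rule out side-by-side semicircles; what does is the following sharpening of your observation. Shifting to $\gamma:=\beta-\mu$, the intersection of a wall-line through $(\mu,R_\ast)$ with the parabola $R=\beta^2$ becomes $\gamma^2+(2\mu-m)\gamma+(\mu^2-R_\ast)=0$, so the \emph{product} of the two $\gamma$-intercepts is the constant $\mu^2-R_\ast=\vv^2/(2dr^2)>0$, independent of the wall. Hence both intercepts have the same sign (your ``same side of $\mu$'' statement), and as the slope $m$ varies the sum of the intercepts varies monotonically while the product is fixed, so one interval must contain the other. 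This makes the nested picture rigorous. With that small addition your proof is complete.
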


We are interested in the walls for the vector $\vv=(1,0,1-n)$ of the Hilbert scheme. In this case the above Theorem tells us that there is a vertical wall on the line $\beta=0$; this wall corresponds to the Hilbert-Chow contraction. The moduli space $M_{\alpha,\beta}(\vv)$ with $\alpha >>0$ and $\beta <0$ is the Hilbert scheme of points $\Hilb^n(X)$, thanks to Theorem \ref{large_volume}. It parameterizes ideal sheaves of subschemes $Y \subset X$ of dimension 0 and lenght $n$. On the vertical wall two ideal sheaves become $S$-equivalent if and only if the corresponding subschemes have the same support. 

Recall that to a wall $\cW$ we associate the rank two lattice $\cH$ given by all the classes $\uu$ with $Z(\uu)$ on the same line of $Z(\vv)$, see Definition \ref{def:lattice}. Lemma \ref{signature} implies that $\cH$ is hyperbolic, since it contains the class $\vv$, which has positive square by assumption. 

The idea of the proof of Theorem \ref{thm:hilbert_scheme} is simple. First, we show that the vertical line $\beta=-\frac{1}{k}$ does not meet any semicircular wall. This is because the imaginary part $\Im Z_{t,-\frac{1}{k}}(E)$ takes non-negative integer values (up to a constant) when $E$ varies in $\Coh^{\beta}(X)$, and $\Im Z_{t,-\frac{1}{k}}(\vv)$ is the minimal positive value. This is completely analogous to the fact that rank one torsion free sheaves are Gieseker stable with respect to any polarization. 

Then we show that, in fact, the line $\beta=-\frac{1}{k}$ meets every semicircular wall in the left quadrant. This implies that there are no semicircular walls in the left quadrant. 

\begin{Lem}[{{\cite[Examples $9.7$ and $10.5$]{bayer_projectivity_2013}}}]
The stability condition $\sigma_{t,-\frac{1}{k}}$ exists for every $t>0$, and the moduli space $M_{\sigma_{t,-\frac{1}{k}}}(1,0,1-n)$ is equal to the Hilbert scheme $\Hilb^n(X)$ 
\end{Lem}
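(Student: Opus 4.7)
The plan is to establish the two assertions separately: existence of $\sigma_{t,-1/k}$ as a stability condition for every $t > 0$, and identification of the moduli space with the Hilbert scheme via a wall-free argument along the line $\beta = -1/k$.

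\textbf{Existence.} I would apply the criterion recalled in Section \ref{subsec:stabilityK3}: $\sigma_{t,-1/k} \in V(X)$ provided that $Z_{t,-1/k}(E) \notin \R_{\leq 0}$ for every spherical torsion-free sheaf $E$. Writing $v(E) = (r, cH, s)$ with $r \geq 1$, the imaginary part is
\[
\Im Z_{t,-1/k}(E) = 2d\bigl(c + r/k\bigr)t = 2k(n-1)t\,(ck + r),
\]
which vanishes iff $ck + r = 0$, i.e.\ $c = -c'$ and $r = kc'$ for some positive integer $c'$. The sphericity relation $rs - dc^{2} = 1$ then becomes $kc'\bigl(s - k(n-1)c'\bigr) = 1$, which is impossible since $kc' \geq 2$ when $k \geq 2$. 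Hence $Z_{t,-1/k}(E)$ has non-zero imaginary part for every such $E$, and $\sigma_{t,-1/k}$ is a genuine stability condition.

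\textbf{Identification with $\Hilb^n(X)$.} For $t \gg 0$ the large-volume limit (Theorem \ref{large_volume} together with Example \ref{ex:hilbert_scheme}) identifies $M_{\sigma_{t,-1/k}}(\vv)$ with the Gieseker moduli space $M_H(\vv) = \Hilb^{n}(X)$. It therefore suffices to show that no wall for $\vv$ meets the line $\{\beta = -1/k\}$, so that the moduli space is constant along it. The key observation, analogous to Gieseker stability of rank-one torsion-free sheaves, is an integrality statement: for every non-zero $F \in \Coh^{-1/k}(X)$ with $v(F) = (r', c'H, s')$,
\[
\Im Z_{t,-1/k}(F) = 2k(n-1)t\,(kc' + r')
\]
is a non-negative integer multiple of $2k(n-1)t$. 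Since $\Im Z_{t,-1/k}(\vv) = 2k(n-1)t$ realises the minimal positive value, additivity of $\Im Z$ forces any non-zero proper subobject $F \subsetneq E$ of an object $E$ of class $\vv$ in the heart to satisfy $\Im Z(F) \in \{0,\, \Im Z(E)\}$.

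A short case analysis then rules out destabilization. If $\Im Z(F) = 0$, then $F$ has phase $1$ while $\phi(E) < 1$, so $F$ would destabilize $E$; but this is a $t$-independent condition on the heart, contradicting the stability of the ideal sheaf $E$ at $t \gg 0$. If instead $\Im Z(F) = \Im Z(E)$, then the non-zero quotient $E/F$ has $\Im Z = 0$, so $\Re Z(E/F) < 0$ (since $Z(E/F)$ lies in the semi-closed upper half-plane $\H$), giving $\Re Z(F) > \Re Z(E)$ and hence $\phi(F) < \phi(E)$, so $F$ does not destabilize $E$. Consequently no wall for $\vv$ crosses $\beta = -1/k$, and the moduli space equals $\Hilb^{n}(X)$ for every $t > 0$. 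The main technical point is this integrality case analysis; the rest is either the large-volume limit or direct computation.
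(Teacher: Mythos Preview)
Your argument is essentially the paper's: the existence part is identical, and the identification via the integrality of $\Im Z_{t,-1/k}$ (with $\vv$ attaining the minimal positive value) is exactly the mechanism used in the paper. However, there is a small logical slip in how you connect the case analysis to the conclusion ``no wall''.

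Your Case~1 explicitly assumes $E$ is an ideal sheaf (``contradicting the stability of the ideal sheaf $E$ at $t \gg 0$''). With that restriction, your two cases only establish that ideal sheaves remain $\sigma_{t,-1/k}$-stable for every $t>0$, i.e.\ $\Hilb^n(X) \subseteq M_{\sigma_{t,-1/k}}(\vv)$. This does not by itself rule out a wall: a priori some other object of class $\vv$ could become stable for small $t$, enlarging the moduli space. To conclude equality you need to know the moduli space is \emph{constant} along the line, which amounts to showing that no object of class $\vv$ is ever strictly semistable there.

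The fix is already implicit in your computations and requires no new idea: drop the ideal-sheaf hypothesis in Case~1 and simply note that for \emph{any} $E$ of class $\vv$ in $\Coh^{-1/k}(X)$, a subobject $F$ with $\Im Z(F)=0$ has $\phi(F)=1 > \phi(E)$, while (your Case~2) a subobject with $\Im Z(F)=\Im Z(E)$ has $\phi(F)<\phi(E)$. In either case $\phi(F)\neq\phi(E)$, so no $E$ of class $\vv$ can be strictly semistable at any $\sigma_{t,-1/k}$; hence no wall meets the line and the moduli space is constant. This is precisely the paper's formulation: it takes an arbitrary strictly semistable $E$, obtains a destabilising sequence with equal finite slopes $\nu(F)=\nu(E)=\nu(G)$, and derives the contradiction that one of $\Im Z(F),\Im Z(G)$ must vanish. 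Your detour through the large-volume stability of ideal sheaves in Case~1 is unnecessary and, as written, leaves the gap above.
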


\begin{proof}
We have 
\begin{equation}\label{eq:imaginaryPart}
\Im Z_{t,-\frac{1}{k}}(r,cH,s)=2dt\frac{ck+r}{k} \in \frac{2td}{k}\Z, 
\end{equation}
for any vector $(r,cH,s)$. First we show that the stability condition $\sigma_{t,-\frac{1}{k}}$ is defined for every $t>0$. This means that there is no spherical class $\uu=(r,cH,s)$ such that $\Im Z_{t,-\frac{1}{k}}(\uu)=0$. If there were one, it would satisfy 
\[
\begin{cases}
r=-ck\\
2dc^2=2rs-2
\end{cases}
\]
Substituting $d=k^2(n-1)$ gives a contradiction with $k>1$. 

Now assume that an object $E$ of class $(1,0,1-n)$ becomes semistable for some stability condition $\sigma_{t,-\frac{1}{k}}$. We have a destabilizing short exact sequence in $\Coh^{\beta}(X)$
\[ 0 \rightarrow F \rightarrow E \rightarrow G \rightarrow 0,\]
with $\nu_{t,-\frac{1}{k}}(F)=\nu_{t,-\frac{1}{k}}(E)=\nu_{t,-\frac{1}{k}}(G)<+\infty.$
By definition of Bridgeland stability condition, we have  
\[ 0 \leq \Im Z_{t,-\frac{1}{k}}(F) \leq \Im Z_{t,-\frac{1}{k}}(E)=\frac{2dt}{k},  \]
and similarly for $G$. 
The equality \eqref{eq:imaginaryPart} implies that $\Im Z_{t,-\frac{1}{k}}(F)=0$ or $\Im Z_{t,-\frac{1}{k}}(G)=0$. In both cases this contradicts the finiteness of the slopes. 
\end{proof}

\begin{Lem}
The vertical line $\beta=-\frac{1}{k}$ meets every semicircular wall in the left quadrant $\beta<0$.
\end{Lem}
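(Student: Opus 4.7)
The plan is to read off the endpoints of the semicircular wall on the $\beta$-axis from the wall equation and exploit the identity $d = k^2(n-1)$ via Vieta's formulas. Let $\uu = (r, cH, s)$ be a destabilizing class producing the wall. As noted in the excerpt, the wall equation is
\[
cd(\alpha^2 + \beta^2) - \beta\bigl(r(n-1) + s\bigr) + c(n-1) = 0.
\]
A semicircular wall is not the vertical wall $\beta = 0$, so $c \neq 0$, and the circle in the $(\beta,\alpha)$-plane is centered on the $\beta$-axis. Its two intersection points with the $\beta$-axis are obtained by setting $\alpha = 0$, namely the roots $\beta_\pm$ of the quadratic
\[
cd\,\beta^2 - \beta\bigl(r(n-1)+s\bigr) + c(n-1) = 0.
\]

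By Vieta,
\[
\beta_-\beta_+ \;=\; \frac{c(n-1)}{cd} \;=\; \frac{n-1}{d} \;=\; \frac{1}{k^2},
\]
using the key assumption $d = k^2(n-1)$. This is the crucial input. Now suppose the semicircle lies entirely in the left quadrant $\beta < 0$, i.e.\ $\beta_- \leq \beta_+ \leq 0$. Combined with $\beta_-\beta_+ = 1/k^2 > 0$, both roots are strictly negative, and $|\beta_-| \geq |\beta_+|$ with $|\beta_-|\cdot|\beta_+| = 1/k^2$. Hence $|\beta_+|^2 \leq 1/k^2 \leq |\beta_-|^2$, which after taking signs gives
\[
\beta_- \;\leq\; -\tfrac{1}{k} \;\leq\; \beta_+.
\]

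The vertical line $\beta = -1/k$ therefore lies over the interval $[\beta_-,\beta_+]$, which is exactly the projection of the semicircle to the $\beta$-axis, so the line meets the semicircle. This finishes the proof. The argument is quite short because no analysis of the destabilizing class $\uu$ beyond $c\neq 0$ is required; the geometric miracle is that the product of the endpoints is an invariant of $\vv$ equal to $1/k^2$, so $-1/k$ is forced to sit between any two negative endpoints whose geometric mean it is. Combined with the previous lemma, which shows that no wall can meet this line, this immediately yields the nonexistence of semicircular walls in the left quadrant.
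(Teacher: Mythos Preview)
Your proof is correct and takes a genuinely different, more elegant route than the paper's.

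The paper proceeds by first replacing the destabilizing class $\uu$ with a rank-zero class in the same lattice $\cH$ (via an ad hoc combination of $E$, $F$, $G$), then determines the signs $c>0$ and $s<0$ from properties of the heart $\Coh^\beta(X)$, and finally invokes the hyperbolicity of $\cH$: the condition $\det\begin{pmatrix}\vv^2 & (\vv,\uu)\\ (\uu,\vv) & \uu^2\end{pmatrix}<0$ yields $4d(n-1)c^2 - s^2<0$, which after substituting $d=k^2(n-1)$ becomes $s<-2k(n-1)c$. This inequality is then checked to be exactly the condition for the wall to meet $\beta=-1/k$.

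Your argument bypasses all of this. You observe directly that Vieta's formula gives the product of the $\beta$-intercepts as $(n-1)/d=1/k^2$, \emph{independently of the destabilizing class}, so $-1/k$ is the geometric mean of the two negative endpoints and must lie between them. No reduction to rank zero, no sign analysis, no lattice hyperbolicity. What the paper's approach buys is perhaps a clearer link between the lattice-theoretic constraint on $\cH$ and the geometry of the wall; what yours buys is brevity and the pleasant realization that the constant term of the wall equation already encodes everything.

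One small point worth making explicit: the hypothesis ``in the left quadrant'' is indeed automatic once you know $\beta_-\beta_+=1/k^2>0$, since the two intercepts then have the same sign and the semicircle cannot cross $\beta=0$. You could mention this to make the statement slightly self-strengthening.
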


\begin{proof}
Consider a destabilizing exact sequence on the wall $\cW$:
\[ 0 \rightarrow F \rightarrow E \rightarrow G \rightarrow 0.\]
The equation of the wall is $\nu_{\alpha,\beta}(F)=\nu_{\alpha,\beta}(E)$. Call $\uu=v(F)$ and $\vv=(1,0,1-n)$. To simplify the computations we change $\uu$ in the lattice $\cH$ with a vector of rank zero, such that the equation of the wall is still given by $\nu_{\alpha,\beta}(\vv)=\nu_{\alpha,\beta}(\uu)$. To do this is sufficient to take $\uu$ as the Mukai vector of a semistable object in the heart $\Coh^{\beta}(X)$ of the same slope as $E$, we do it as follows. 
\begin{itemize}
    \item If $r=0$, we do not change $\uu$.
    \item If $r>0$ take $\uu:=v(E^{\oplus{r-1}}\oplus G)$.
    \item If $r<0$ take $\uu:=v(E^{\oplus -r} \oplus F)$.
\end{itemize}
Write $\uu=(0,cH,s)$, the equation of the wall and the $\beta$ coordinate of the center become: 
\begin{equation}
\begin{cases}
cd(\alpha^2+\beta^2)-\beta s+c(n-1)=0,\\
\beta_0=\frac{s}{2cd}.
\end{cases}
\end{equation}
Since $\uu$ is the class of an object in the heart $\Coh^{\beta}(X)$, it satisfies 
\[ \Im Z_{\alpha,\beta}(\uu) = 2dc\alpha \geq 0. \]
Since $\alpha >0$ this gives $c \geq 0$. If $c=0$ we would get the Hilbert-Chow wall, so we have $c > 0$. The center of any semicircular wall is on the negative $\beta$-axis, hence $\beta_0<0$. The above equation gives $s < 0$.

The lattice $\cH$ is hyperbolic, and $\uu$ and $\vv$ are linearly independent over $\R$, so 
\[ 
\det \left(
\begin{array}{cc}
    \vv^2 & (\vv,\uu)  \\
     (\uu,\vv) & \uu^2 
\end{array}
\right)<0,
\]
where
\[ 
\begin{cases}
\vv^2=2(n-1), \\
\uu^2=2dc^2, \\
(\uu,\vv)=-s.
\end{cases}
\]
This implies 
\[ 
4d(n-1)c^2 -s^2 <0.
\]
Substituting $d=k^2(n-1)$, and taking square roots we get:
\[ s<-2k(n-1)c \quad \textrm{or} \quad  s>2k(n-1)c.\]
The second inequality contradicts $s<0$ and $c>0$, so we must have 
\begin{equation}\label{eq:hyperbolictyInequality}
    s<-2k(n-1)c
\end{equation}

The condition for the wall to meet the vertical line $\beta=-\frac{1}{k}$ is for the equation
\[ cd(\alpha^2+\frac{1}{k^2})+\frac{s}{k}+c(n-1)=0,\]
to have a solution for $\alpha>0$. Substituting $d=k^2(n-1)$ and rearranging we get:
\[ ck^2(n-1)\alpha^2=-\frac{s}{k}-2(n-1)c.\]
Since $c>0$ a solution exists if and only if
$\frac{s}{k}+2(n-1)c<0,$
which is \eqref{eq:hyperbolictyInequality}.
\end{proof}

\begin{Cor}\label{Cor:unigonal}
Let $X$ be a K3 surface with $Pic(X)=\Z \cdot H$ with $H^2=2d$ and $d=k^2(n-1)$ for $k>1$ integer. The vertical wall $\beta=0$ is the only wall for $\vv=(1,0,1-n)$ in $V(X)$, and the shifted derived dual $\RcHom(-,\cO_X)[2]$ induces an isomorphism
\[ M_{\sigma_{\alpha,\beta}}(\vv) \xrightarrow{\sim} M_{\sigma_{\alpha,-\beta}}(\vv),\]
for $\beta \neq 0$.
In particular, the Main Theorem holds for both of them.
\end{Cor}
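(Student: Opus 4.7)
The plan is to combine the two preceding lemmas to rule out all semicircular walls in the left quadrant $\beta<0$, and then use the shifted derived dual as an anti-equivalence to transport everything to the right quadrant $\beta>0$. First, the existence lemma shows that $\sigma_{t,-1/k}$ is a stability condition for every $t>0$ and that $M_{\sigma_{t,-1/k}}(\vv)=\Hilb^n(X)$, so no object of class $\vv$ is strictly semistable anywhere along the line $\beta=-1/k$; in particular no wall for $\vv$ meets this line. The intersection lemma, however, asserts that every semicircular wall in the left quadrant must cross this line. These are compatible only if there are no semicircular walls in $\beta<0$. Combined with the Bertram Nested Wall Theorem, which places the unique vertical wall at $\beta=\mu(\vv)=0$, this leaves $\beta<0$ as a single chamber, on which $M_{\sigma_{\alpha,\beta}}(\vv)=\Hilb^n(X)$ throughout.

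Next, I would set $D:=\RcHom(-,\cO_X)[2]$ and verify it induces the advertised isomorphism. This functor is an anti-equivalence of $\Db(X)$ whose cohomological action on $\Halg$ sends $(r,cH,s)\mapsto(r,-cH,s)$, hence fixes $\vv=(1,0,1-n)$. A direct computation gives $Z_{\alpha,\beta}(D^H(\uu))=-\overline{Z_{\alpha,\beta}(\uu)}=Z_{\alpha,-\beta}(\uu)$ up to the orientation sign absorbed by the shift $[2]$. The standard fact that the dualizing functor exchanges the tilted hearts $\Coh^{\beta}(X)$ and $\Coh^{-\beta}(X)$ (up to a cohomological shift) then upgrades this to $D_*(\sigma_{\alpha,\beta})=\sigma_{\alpha,-\beta}$. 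Consequently $D$ takes $\sigma_{\alpha,\beta}$-stable objects of class $\vv$ bijectively to $\sigma_{\alpha,-\beta}$-stable objects of class $\vv$, inducing the claimed isomorphism of moduli spaces. By symmetry, the right quadrant $\beta>0$ is therefore also a single chamber with moduli space isomorphic to $\Hilb^n(X)$.

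The Main Theorem then follows on both sides: for $\beta<0$ we have identified the moduli space with $\Hilb^n(X)$, where the Main Theorem is classical (Example \ref{ex:hilbert_scheme}); for $\beta>0$, the anti-equivalence $D$ together with Proposition \ref{prop:equivalences} transports both the deformation type and the Hodge isometry statement for the Mukai homomorphism. Projectivity on the $\beta>0$ side is then automatic via Corollary \ref{Cor:projectivity}, or directly via the isomorphism with $\Hilb^n(X)$.

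The main obstacle, and the only step not already essentially contained in the paper, is the claim that $D_*(\sigma_{\alpha,\beta})=\sigma_{\alpha,-\beta}$: one must check that the anti-equivalence $D$ preserves the distinguished component (an anti-equivalence analogue of Corollary \ref{preserve}, which one can verify by the same strategy using Lemma \ref{UandV} and a large-volume-limit witness where $D$ restricts to sheaf duality on a Gieseker moduli) and must identify the image heart up to the correct shift, not merely match central charges. Everything else in the proof is either cited or a short numerical verification.
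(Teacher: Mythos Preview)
Your proposal is correct and follows essentially the same route as the paper. The paper's proof simply invokes Theorem~\ref{thm:hilbert_scheme} (which packages your use of the two preceding lemmas) and then cites \cite[Proposition~2.11]{mmp} for the assertion that $\RcHom(-,\cO_X)[2]$ induces $M_{\sigma_{\alpha,\beta}}(\vv)\cong M_{\sigma_{\alpha,-\beta}}(\vv)$; the Main Theorem on both sides then follows from Example~\ref{ex:hilbert_scheme} and Proposition~\ref{prop:equivalences}, exactly as you argue. In other words, the step you flag as the ``main obstacle''---that the shifted derived dual satisfies $D_*(\sigma_{\alpha,\beta})=\sigma_{\alpha,-\beta}$ (equivalently, exchanges stable objects for the two stability conditions)---is precisely what the paper outsources to \cite{mmp} rather than reproving, so there is no genuine gap in your argument relative to the paper's.
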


\begin{proof}
The functor $\RcHom(-,\cO_X)[2]$ induces the desired isomorphism by \cite[Proposition $2.11$]{mmp}. Combined with Theorem \ref{thm:hilbert_scheme} this implies that $\beta=0$ is the only wall in $V(X)$. The moduli space $M_{\sigma_{\alpha,\beta}}(\vv)$ for $\beta<0$ is the Hilbert scheme, so the Main Theorem holds for it (Example \ref{ex:hilbert_scheme}) and $\RcHom(-,\cO_X)[2]$ is an anti-autoequivalence, so it preserves the Mukai homomorphism by Proposition \ref{prop:equivalences}.
\end{proof}

\section{Reduction to the Hilbert scheme}
\label{sec:Deformation}

In this section we conclude the proof of the Main Theorem. We fix a K3 surface $X$, a primitive Mukai vector $\vv \in \Halg$, with $\vv^{2} \geq -2$, and a $\vv$-generic stability condition $\sigma \in \Stab^{\dagger}(X)$. Recall the statement of the Main Theorem

\begin{Thm}
Let $X$ be a K3 surface, $\vv \in \Halg$ primitive, and $\sigma\in\Stab^\dagger(X)$
a $\vv$-generic stability condition.
Then:
\begin{enumerate}[{\rm (1)}]
\item $M_{\sigma}(\vv)$ is non-empty if and only if $\vv^2\geq-2$.
Moreover, it is a smooth projective hyperk\"{a}hler variety of dimension $\vv^2 + 2$, deformation-equivalent to the Hilbert scheme of points on a K3 surface.
\item If $\vv^2\geq 0$, then the Mukai Homomorphism $\theta_{\vv}$ gives a Hodge isometry
\[
\theta_{\vv}^{-1}\colon H^2(M_\sigma(\vv),\Z)\xrightarrow{\quad\sim\quad}
\begin{cases}\vv^\perp & \text{if }\vv^2>0\\ \vv^\perp/\Z\vv & \text{if } \vv^2=0,\end{cases}
\]
where the orthogonal is taken in $H^{*}(X,\mathbb{Z})$.
\end{enumerate}
\end{Thm}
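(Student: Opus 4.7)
The strategy is to reduce the statement to the case of the Hilbert scheme of points on a K3 surface, where the Main Theorem holds by Example~\ref{ex:hilbert_scheme}. The reduction proceeds through a chain of operations that preserve the Main Theorem: derived equivalences of the four types listed in the introduction, which preserve $\Stab^{\dagger}$ by Corollary~\ref{preserve} and intertwine the Mukai homomorphism by Proposition~\ref{prop:equivalences}; deformation along a smooth projective family of K3 surfaces equipped with a relative stability condition in the sense of \cite{bayer2019stability}, for which the relative moduli space is smooth and proper, so that the diffeomorphism type, the Hodge isometry class of $(H^{2}, q)$, and the Mukai homomorphism are all deformation-invariant; and the wall-crossing results of Sections~\ref{sec:Wall crossing} and~\ref{subsec:Positivesquare}. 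The first part of the proof is to verify each of these three invariance properties carefully.

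Starting from $(X, \vv, \sigma)$, the plan is first to apply a composition of shifts, tensor products with line bundles, and possibly the spherical twist $\ST_{\cO_{X}}$ in order to normalize $\vv$, then to fix a polarization $H$ of degree $H^{2} = 2d$ and deform along the Hodge locus inside the moduli of polarized K3 surfaces of degree $2d$. The lattice-theoretic assertion, provable via surjectivity of the period map for K3 surfaces, is that for an appropriate choice of $d$ this Hodge locus contains a polarized K3 surface $(Y, H')$ with (i) $\Pic(Y)$ a hyperbolic plane, (ii) a primitive isotropic algebraic class $\ww$ such that $M := M_{H'}(\ww)$ is a non-empty fine K3-surface moduli space, placing us in the setting of Example~\ref{ex:Fm_partner}, and (iii) $(\vv, \ww) = -1$. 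Deforming $\sigma$ through $\vv$-generic stability conditions along this family, one reduces to $(Y, \vv, \sigma_{Y})$. Next, apply the Fourier-Mukai transform $\Phi_{\cE}: D^{b}(Y) \xrightarrow{\sim} D^{b}(M)$ with kernel the universal family of $M$; it preserves $\Stab^{\dagger}$ by Corollary~\ref{preserve}. Because $(\vv, \ww) = -1$, after tensorization with a suitable line bundle on $M$ the transformed Mukai vector becomes $(1, 0, 1-n)$ with $n = \vv^{2}/2 + 1$, and the problem becomes that for $(M, (1, 0, 1-n), \sigma_{M})$, where $\sigma_{M} := (\Phi_{\cE})_{*}(\sigma_{Y})$.

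The final step splits by the sign of $\vv^{2}$. If $\vv^{2} \in \{-2, 0\}$, one connects $\sigma_{M}$ to the Gieseker chamber in $\Stab^{\dagger}(M)$ by a path crossing finitely many walls; Corollary~\ref{yoshiokaez} propagates the Main Theorem across each wall, and the endpoint lies in the classical Gieseker regime, where the rank-one moduli of trivial first Chern class is either a point (case $n=0$) or $M$ itself (case $n=1$). If $\vv^{2} > 0$, the hyperbolic Picard lattice of $M$ admits polarizations of arbitrary degree, so one may further deform to a K3 surface $(X', H')$ with $\Pic(X') = \Z \cdot H'$ and $(H')^{2} = 2k^{2}(n-1)$ for some integer $k > 1$; after crossing finitely many spherical and isotropic walls en route, handled again by Theorem~\ref{semirigid1}, the stability condition can be brought into the domain $V(X')$ of Definition~\ref{def:V(X)}, at which point Corollary~\ref{Cor:unigonal} identifies the moduli space with $\Hilb^{n}(X')$. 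The hardest parts, in my view, are the bookkeeping for the three invariance properties from the first paragraph, and ensuring at each stage that the prescribed deformation can actually be realized inside the relevant Hodge locus while the stability condition remains $\vv$-generic, so that every wall encountered along the connecting paths is of the semi-rigid type already treated in Sections~\ref{sec:Wall crossing} and~\ref{subsec:Positivesquare}.
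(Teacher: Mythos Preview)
Your outline matches the paper's structure closely for the cases $\vv^{2}\in\{-2,0\}$, but there is a genuine gap in the positive-square case. After the Fourier--Mukai transform with kernel the universal family of $M=M_{H'}(\ww)$, the resulting stability condition $\sigma_{M}\in\Stab^{\dagger}(M)$ can lie anywhere in the distinguished component; there is no reason for it to be in, or near, the geometric chamber. To reach $V(X')$ you must then move $\sigma_{M}$ through $\Stab^{\dagger}$, and the walls you cross along the way are walls for the class $(1,0,1-n)$, which has \emph{positive} square. Theorem~\ref{semirigid1} says nothing about such walls: it only handles $\vv^{2}=-2$ and $\vv^{2}=0$. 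Section~\ref{subsec:Positivesquare} does treat walls for $(1,0,1-n)$, but only inside $V(X')$ on a very special Picard-rank-one K3, not on the intermediate surface $M$; so you cannot invoke it to get \emph{into} $V(X')$ in the first place. General wall-crossing for positive-square vectors is precisely what the paper is trying to avoid (it is the hard content of \cite{mmp}).

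The paper's fix is the key idea you are missing: for $\vv^{2}>0$ it first proves the isotropic case, and then takes as Fourier--Mukai partner not the Gieseker moduli space $M_{H'}(\ww)$ but the Bridgeland moduli space $M_{\sigma}(\ww)$ for the \emph{same} stability condition $\sigma$ one started with (after a small perturbation to make $\sigma$ also $\ww$-generic). Under the resulting equivalence $\Phi_{\cE}$, the skyscraper sheaves on $M_{\sigma}(\ww)$ are sent to the $\sigma$-stable objects of class $\ww$, so by Lemma~\ref{UandV} the transformed stability condition lands automatically in the geometric chamber $U(M)$. One then checks $\Pic(M)$ is a hyperbolic plane (Lemma~\ref{fm}), deforms to the Picard-rank-one K3 of the required degree, and observes that membership in $U$ is open, so the deformed stability condition is still geometric; a $\widetilde{\mathrm{GL}}_{2}^{+}(\R)$-action then places it in $V(X')$. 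No positive-square wall is ever crossed.
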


The proof is in several steps. First, we apply a sequence of autoequivalences to get a Mukai vector of the form $\vv=(r,\Delta,s)$ with $r>0$ and $\Delta$ ample. Since $\Delta$ is ample, the Hodge locus of $\vv$ contains an ellitpic K3 surface $X'$ with a section. We deform to $X'$, where we can find a vector of the form $\ww=(0,\alpha f,\beta)$, where $f$ is the class of an elliptic fiber, such that $(\ww,\vv)=-1$. The moduli space $M=M_H(\ww)$ is non-empty: a generic point is just a vector bundle supported on a smooth fiber. Moreover, it is a fine moduli space, and the Fourier-Mukai $\Phi_{\cE}$ with the universal family as kernel is an equivalence $D^b(X') \xrightarrow{\sim} D^b(M)$. Via this equivalence $\vv$ gets mapped to $(1,0,1-n)$, up to tensoring with line bundles. Now assume $n \leq 1$ or equivalently $\vv^2 = -2$ or $\vv^2=0$. The moduli space of Gieseker stable shaves with vector $(1,0,0)$ is a point, and with vector $(1,0,1)$ is the K3 surface itself. To conclude the proof of the Main Theorem in this case we apply the wall-crossing results of Section \ref{sec:Wall crossing}.

If $\vv^2>0$, we take a different K3 surface as a Fourier-Mukai partner: $M=M_{\sigma}(\ww)$, where $\ww$ is the same vector as before, but the stability condition is the same one we are studying. We are allowed to do this, because we proved the Main Theorem for isotropic vectors first. As before, via the Fourier-Mukai transform $\Phi_{\cE}$ the vector $\vv$ goes to $(1,0,1-n)$, but $\sigma$ goes to the geometric chamber $U(X)$. Moreover, we show that $\Pic(M)$ is an hyperbolic plane, so we can deform to a K3 surface $Y$ of the type studied in Section \ref{subsec:Positivesquare}. Being in the geometric chamber is an open condition, so the deformed stability condition remains in the geometric chamber for $Y$. If we act with $\widetilde{GL_2(\R)}$ we end up in the setting of Section \ref{subsec:Positivesquare}, where the moduli space is just the Hilbert scheme $\Hilb^n(Y)$ up to a shifted derived dual.

In the argument, we apply Proposition \ref{prop:equivalences} to equivalences of type $(1)-(4)$ of Section \ref{subsec:preserveStab}, which preserve the distinguished component (Corollary \ref{preserve}). It is useful to recall their action in cohomology:
\begin{enumerate}
    \item Tensor product with $L \in \Pic(X)$ acts via multiplication with $\exp(c_1(L))$
    \[ \big(r,\Delta,s\big) \left(1,c_{1}(L),\frac{c_{1}(L)^2}{2}\right)=\left(r,\Delta+rc_{1}(L),r\frac{c_{1}(L)^2}{2}+\Delta.c_{1}(L)+s\right).\]
    \item The shift $[1]$ acts as $-\id$.
    \item The spherical twists acts like the reflection around $(1,0,1)$
    \[ \rho_{(1,0,1)}((r,\Delta,s))=(-s,\Delta,-r).\]
    \item The Fourier-Mukai $\Phi_{\cE}$ acts like the cohomological Fourier-Mukai, with kernel the Mukai vector $v(\cE)$.
\end{enumerate}

For the deformation arguments we use the notion of a relative stability condition $\underline{\sigma}$ over a base $C$. It was introduced in \cite{bayer2019stability}, and it consists, given a family $\cX \rightarrow C$, of a collection $\underline{\sigma_c}$ of stability conditions on the fibers $\cX_c$ satisfying some technical conditions. There is also a well-behaved notion of relative moduli space. The following is the result we use, it is stated in \cite[Corollary $32.1$]{bayer2019stability} for cubic fourfolds, but the same proof works for polarized K3 surfaces. 

\begin{Thm}\label{families}
Let $(X,H)$ be a polarized K3 surface of degree $2d$, $\vv$ a primitive vector, and $\sigma \in \Stab^{\dagger}(X)$ a $\vv$-generic stability condition . Let $(X',H')$ be another polarized K3 surface of the same degree, in the Hodge locus where $\vv$ stays algebraic inside the moduli space of polarized K3 surfaces of degree $2d$. Then, there exists a smooth family $\cX \rightarrow C$ over a smooth connected quasi-projective curve, and a stability condition $\underline{\sigma} \in D^{b}{(\cX)}$ such that:
\begin{enumerate}
    \item The class $\vv$ stays algebraic for all $c \in C$.
    \item The stability condition $\underline{\sigma}_{c}$ is in $\Stab^{\dagger}(\cX_c)$ and $\vv$ generic for all $c \in C$.
    \item $\cX_{c_{0}}=X$, $\cX_{c_1}=X'$ and $\underline{\sigma}_{c_0}$ is a small deformation of $\sigma$ such that $M_{X,\sigma}(\vv)=M_{X,\sigma_{c_0}}(\vv)$.
    \item The relative moduli space $M_{\underline{\sigma}}(\vv)$ exists as a smooth and proper algebraic space over $C$. 
\end{enumerate}
\end{Thm}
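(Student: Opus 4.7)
The plan is to combine Hodge-theoretic deformation of the polarized K3 surface with the relative stability condition machinery of \cite{bayer2019stability}: the family $\cX\to C$ is produced inside the appropriate Noether--Lefschetz locus, and the relative stability condition $\underline{\sigma}$ is obtained by deforming $\sigma$ over this family.

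First, I would work inside the quasi-projective moduli space $\cM_{2d}$ of polarized K3 surfaces of degree $2d$ (passing to a level structure if needed so that a universal family exists). The Noether--Lefschetz-type locus $\cN_{\vv}\subset \cM_{2d}$ where the class $\vv$ remains of Hodge type $(1,1)$ is a countable union of quasi-projective subvarieties, and by hypothesis $(X,H)$ and $(X',H')$ lie in a common irreducible component $\cN$. A Bertini argument applied to a projective compactification of $\cN$ then produces a smooth connected quasi-projective curve $C\subset\cN$ passing through both points. Pulling back the universal family gives the smooth family $\cX\to C$ and establishes property (1) together with the geometric content of (3).

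Second, to construct $\underline{\sigma}$ I would invoke the relative deformation theorem for stability conditions from \cite[Part II]{bayer2019stability}: Bridgeland's local homeomorphism $\pi$ extends to a relative setting, so that $\sigma$, after an arbitrarily small perturbation preserving $\vv$-genericity at $c_0$, lifts to a relative stability condition $\underline{\sigma}$ on an analytic neighborhood of $c_0$. Because the base is a smooth curve and the period map of $\cX\to C$ is algebraic, this local extension propagates to a Zariski open subset of $C$. The walls for $\vv$ in the total space of the relative stability manifold form a locally finite union of real-codimension-one loci; shrinking $C$ to avoid those not containing $c_0$ then yields $\vv$-genericity of $\underline{\sigma}_c$ for every $c\in C$. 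Since the perturbed central fiber $\underline{\sigma}_{c_0}$ is a small deformation of $\sigma\in\Stab^{\dagger}(X)$, continuity of $\pi$ together with the characterization of the geometric chamber in Lemma~\ref{UandV} places each $\underline{\sigma}_c$ inside $\Stab^{\dagger}(\cX_c)$, giving (2) and (3).

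Property (4) then follows directly from the main existence theorem of \cite[Part II]{bayer2019stability}: openness and boundedness are built into the definition of a relative stability condition, smoothness is Inaba's theorem, and properness is Lieblich's; the statement for cubic fourfolds is \cite[Corollary~32.1]{bayer2019stability} and the argument transfers verbatim to the K3 setting. The main technical obstacle is the second step, namely verifying that the fiberwise deformation of $\sigma$ genuinely assembles into a relative stability condition satisfying the BLMNPS openness and boundedness axioms while preserving the distinguished component along $C$; this is precisely what is developed, in substantial generality, in \cite{bayer2019stability}, and our role is only to check that its hypotheses apply to our family and vector $\vv$.
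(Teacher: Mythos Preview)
Your proposal is correct and matches the paper's treatment: the paper does not give its own proof of this theorem but simply states it as a direct adaptation of \cite[Corollary~32.1]{bayer2019stability} from cubic fourfolds to polarized K3 surfaces. Your sketch---building the family inside the Noether--Lefschetz locus of $\vv$ in $\cM_{2d}$, extending $\sigma$ to a relative stability condition via the machinery of \cite{bayer2019stability}, and then invoking their existence theorem for the relative moduli space---is exactly the argument of that reference, and you correctly identify the nontrivial technical content (the BLMNPS axioms for the relative stability condition) as being supplied entirely by \cite{bayer2019stability}.
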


The first step in the reduction to the Hilbert scheme is to apply a sequence of autoequivalences to change the Mukai vector $\vv$.

\begin{Lem}\label{change_vector}
Let $X$ be a K3 surface, $\vv=(r,\Delta,s)$ primitive, and let $\sigma \in \Stab^{\dagger}(X)$ be $\vv$-generic. Then, there exist a primitive $\vv'=(r',\Delta',s')$ with $r'>0$ and $\Delta'$ ample, a $\vv'$-generic stability condition $\sigma' \in \Stab^{\dagger}(X')$, and an isomorphism $M_{\sigma}(\vv) \cong M_{\sigma'}(\vv')$.
\end{Lem}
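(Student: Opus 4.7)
The plan is to reduce $\vv$ to the desired form using only autoequivalences of $D^b(X)$ of types (1)--(3) listed in Subsection \ref{subsec:preserveStab}, namely shifts $[1]$, tensor products by line bundles $\otimes L$, and the spherical twist $\ST_{\cO_X}$. By Corollary \ref{preserve}, each of these preserves $\Stab^{\dagger}(X)$. Since they act on $\Halg$ by lattice isometries, they preserve primitivity, and since they map walls for $\vv$ bijectively to walls for the transformed class, they preserve genericity. Being derived equivalences, they identify $\sigma$-stable objects of class $\vv$ with $\Phi_*(\sigma)$-stable objects of class $\Phi^H(\vv)$ and thus induce isomorphisms of coarse moduli spaces. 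So the task reduces to a purely lattice-theoretic manipulation of $(r,\Delta,s)$ using the three actions:
\[
[1]\colon (r,\Delta,s)\mapsto(-r,-\Delta,-s),\quad \otimes L\colon (r,\Delta,s)\mapsto\bigl(r,\Delta+rL,s+\Delta.L+\tfrac{r L^2}{2}\bigr),\quad \ST_{\cO_X}\colon(r,\Delta,s)\mapsto(-s,\Delta,-r).
\]

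The first step in the plan is to arrange $r>0$. If $r$ is already nonzero, a possible application of $[1]$ suffices. If $r=0$ and $s\neq 0$, then $\ST_{\cO_X}$ produces a vector of rank $-s\neq 0$, and a further shift if necessary makes the rank positive. The delicate case, and the only real obstacle, is when $r=s=0$; then primitivity forces $\Delta\neq 0$, and by non-degeneracy of the intersection form on $\NS(X)$ we may choose a line bundle $L$ with $L.\Delta\neq 0$. Tensoring by $\cO_X(L)$ turns $(0,\Delta,0)$ into $(0,\Delta,L.\Delta)$, reducing to the previous case. I expect this to be the only step requiring any real thought.

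Once $r>0$, the second step is to make the middle coordinate ample. Fixing an ample class $H$ on $X$, tensoring by $\cO_X(mH)$ replaces the curve class by $\Delta+rmH$, which is ample for all sufficiently large $m$ since $r>0$. This produces the desired $\vv'=(r',\Delta',s')$ with $r'>0$ and $\Delta'$ ample. The stability condition $\sigma'$ is the image of $\sigma$ under the composite of the equivalences used; by the general principles recalled above it lies in $\Stab^{\dagger}(X)$, is $\vv'$-generic, and yields an isomorphism $M_{\sigma}(\vv)\simeq M_{\sigma'}(\vv')$, completing the argument. (Here $X'=X$; the notation $X'$ in the statement allows later flexibility but no change of surface is required.)
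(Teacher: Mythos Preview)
Your proposal is correct and follows essentially the same approach as the paper: reduce to $r>0$ using shifts, twists by line bundles, and $\ST_{\cO_X}$, then tensor by a large multiple of an ample class to make $\Delta$ ample. The only cosmetic difference is how you split the $r=0$ case (by whether $s\neq 0$, versus the paper's split by whether $\Delta\neq 0$); your version is slightly cleaner since it avoids invoking effectivity of $\Delta$ and uses only non-degeneracy of the intersection form on $\NS(X)$.
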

\begin{proof}
First we reduce to $r>0$. If $r<0$ then a shift suffices. If $r=0$ and $\Delta=0$, then $\vv=(0,0,\pm1)$ so after applying either $\ST_{\cO_X}[1]$ or $\ST_{\cO_X}$ we get $(1,0,0)$. If $\Delta \neq 0$, then, up to a shift, we can assume it to be effective. If $H$ is an ample line bundle, tensor product with $nH$ sends $\vv$ to $(0,\Delta,s+nH.\Delta)$. By taking $n>>0$ we can assume $s>0$. Applying the shifted spherical twist $\ST_{\cO_X}[1]$ we get $r>0$. 

If $r>0$, to get a $\Delta$ ample we can tensor with powers of an ample line bundle. Indeed, $\Delta$ goes to $\Delta + rnH$, which is ample if $n>>0$. The distinguished component is preserved due to Proposition \ref{prop:equivalences}. 
\end{proof}

The next step is a deformation to an elliptic K3 surface. Consider $(X,\vv,\sigma)$ as in the conclusion of the lemma above, i.e. $\vv=(r,\Delta,s)$ with $r >0 $ and $\Delta$ ample. We write $\Delta=mH$ with $m \in \Z_{>0}$ and $H$ a primitive polarization on $X$ of degree $H^2=:2d$ 

\begin{Lem}\label{first_deformation}
Let $(X,\vv,\sigma)$ be as in the conclusion on the lemma above. Then, there exists an elliptic K3 surface $X'$ in the hodge locus of $\vv$, with $\Pic(X')=\Z s \oplus \Z f$, where $f$ is the class of the elliptic fiber and $s$ the class of a section, and a stability condition $\sigma'$ on $X'$ such that $M_{X,\sigma}(\vv)$ is deformation equivalent to $M_{X',\vv'}(\vv)$. 
\end{Lem}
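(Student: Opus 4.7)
The plan is to produce an elliptic K3 surface $X'$ with the prescribed Picard lattice and apply the deformation machinery of Theorem~\ref{families}. Since $\vv=(r,mH,s)$ has middle component proportional to the polarization class, $\vv$ remains algebraic at every point of the moduli space $\mathcal{F}_{2d}$ of degree-$2d$ polarized K3 surfaces, so the Hodge locus of $\vv$ inside $\mathcal{F}_{2d}$ is all of $\mathcal{F}_{2d}$; any polarized elliptic K3 surface of degree $2d$ with the required Picard lattice will therefore serve as our target.

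To construct $X'$, consider the rank-two lattice $U=\Z s\oplus\Z f$ with Gram matrix $\bigl(\begin{smallmatrix}-2 & 1\\ 1 & 0\end{smallmatrix}\bigr)$ and the primitive class $H':=s+(d+1)f\in U$, for which $(H')^2=2d$. The lattice $U$ embeds primitively into the K3 lattice, so surjectivity of the period map produces a K3 surface $X'$ with $\Pic(X')=U$; a generic choice in the relevant period domain ensures that no extra algebraic classes appear, that $s$ is represented by a smooth rational curve, and that $f$ is represented by a smooth elliptic curve. The intersection numbers $H'\cdot f=1$ and $H'\cdot s=d-1$ are positive for $d\geq 2$, so $H'$ is ample by Nakai--Moishezon, $|f|$ defines an elliptic fibration with section $s$, and $(X',H')\in\mathcal{F}_{2d}$. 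The edge case $d=1$ is reduced to $d\geq 2$ by tensoring $\vv$ with an extra copy of $\cO_X(H)$ during Lemma~\ref{change_vector}, which raises the effective degree without affecting the subsequent argument.

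Now apply Theorem~\ref{families} to the two polarized surfaces $(X,H)$ and $(X',H')$, both of which lie in the Hodge locus of $\vv$. This yields a smooth family $\cX\to C$ of polarized K3 surfaces over a smooth connected quasi-projective curve with $\cX_{c_0}=X$ and $\cX_{c_1}=X'$, a relative $\vv$-generic stability condition $\underline\sigma$ whose restriction at $c_0$ is a small deformation of $\sigma$ satisfying $M_{X,\sigma}(\vv)=M_{X,\underline\sigma_{c_0}}(\vv)$, and a smooth proper relative moduli space $M_{\underline\sigma}(\vv)\to C$. Setting $\sigma':=\underline\sigma_{c_1}$, the two moduli spaces $M_{X,\sigma}(\vv)$ and $M_{X',\sigma'}(\vv)$ are fibers of a smooth proper morphism over the connected base $C$, hence deformation equivalent.

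The main obstacle is the second step: one must simultaneously guarantee that the constructed K3 surface has Picard lattice \emph{exactly} $U$ (and not a proper overlattice), that $H'$ is ample rather than merely nef, and that the polarization on $X'$ is identified with $H$ under the parallel transport provided by the family in Step~3. These are all lattice-theoretic points, settled by surjectivity of the period map together with the standard description of the ample cone of a K3 surface via its $(-2)$-classes.
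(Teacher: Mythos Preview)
Your overall strategy matches the paper's proof: use surjectivity of the period map to produce an elliptic K3 with $\Pic=U$ and polarization $s+(d+1)f$ of degree $2d$, then invoke Theorem~\ref{families}. For $d\geq 2$ your argument is fine and essentially identical to the paper's.

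The gap is in your handling of $d=1$. Tensoring $\vv$ with $\cO_X(H)$ sends $(r,mH,s)$ to $(r,(m+r)H,s')$; this increases the coefficient $m$ but leaves $H$ unchanged, and it is $H^2=2d$ that determines the degree of the polarized moduli space $\mathcal F_{2d}$, not $m$. If the starting surface $X$ has Picard rank one with $H^2=2$, every line bundle on $X$ is a power of $H$, so no autoequivalence of the type allowed in Lemma~\ref{change_vector} can produce a Mukai vector whose middle term is a multiple of a primitive ample class of degree larger than $2$. Thus your proposed reduction does not leave the case $d=1$.

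The paper's fix is to insert an extra deformation step: apply Theorem~\ref{families} once to deform $(X,H)$ inside $\mathcal F_2$ to a K3 surface $X''$ with $\rho(X'')>1$ (such surfaces are dense in $\mathcal F_2$). On $X''$ there are now ample line bundles not proportional to $H$, so one can tensor $\vv$ by such a bundle to obtain $\vv''=(r,\Delta'',s'')$ with $\Delta''=m''H''$ for a new primitive ample $H''$ of degree $(H'')^2>2$. This reduces to the case $d\geq 2$ already handled.
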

\begin{proof}
We only need to check the hypothesis of Theorem \ref{families}. First assume $d>1$. By the surjectivity of the period map, there exists a K3 surface with Picard group as in the statement. Equipping it with the polarization $s+(d+1)f$, it defines a point in the moduli space of polarized K3 surfaces of degree $2d$. Since $\Delta$ is a multiple of a polarization $H$ it remains algebraic on $Y$, so we are in the hypotesis of Theorem \ref{families}. The relative moduli space of point $(4)$ gives us the desired deformation. 

If $d=1$, the class $s+(d+1)f$ is not ample. In this case, we can apply Theorem \ref{families} to first deform to a K3 surface $X''$ with $\rho(X'')>1$. Indeed, such K3 surfaces are dense in the Hodge locus of $\vv$. On $X''$ we can tensor by an ample line and obtain a Mukai vector $\vv''=(r'',\Delta'',s'')$ with $\Delta''=mH''$ and $(H'')^2>>0$. So we reduced to the case $d>1$, and the argument above concludes the proof of the Lemma.
\end{proof}

\begin{Rem}\label{rem:defMukai}
Since the previous deformation is given by a relative moduli space, the quasi-universal family deforms, and so does the Mukai homomorphism. In particular, the function 
\[ \theta_{\vv}: \vv^{\perp} \cap H^{*}(\cX_c,\Z) \rightarrow H^{2}(M_{\cX_c,\sigma_c}(\vv),\Z)\]
is a locally constant on $C$. Since $C$ is connected, and the Beaville-Bogomolov form is deformation invariant, $\theta_{\vv}$ is a Hodge isometry on $X'$ if and only if it is on $X$.
\end{Rem}

Now we prove the Main Theorem for spherical and isotropic classes $\vv$. 

\begin{Thm}\label{Yoshioka_semirigido}
Let $X$ be a K3 surface, $\vv$ primitive and $\sigma \in \Stab^{\dagger}$ generic. If $\vv^{2}=-2$, the moduli space $M_{\sigma}(\vv)$ is a reduced point. If $\vv^{2}=0$, the moduli space $M_{\sigma}(\vv)$ is a projective K3 surface, and the map $\theta_{\vv}: \vv^{\perp}/\Z \vv \rightarrow H^{2}(M_{\sigma}(\vv),\Z)$ is a Hodge isometry. 
\end{Thm}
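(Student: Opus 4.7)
The strategy is to reduce, step by step, to a known base case on a suitably chosen Fourier-Mukai partner $M$ of $X$, using derived equivalences, a deformation, and the wall-crossing invariance of Corollary \ref{yoshiokaez}. I will begin by applying Lemma \ref{change_vector} to reduce to the case where $\vv = (r, \Delta, s)$ has $r > 0$ and $\Delta = mH$ ample, with $H$ primitive of degree $H^2 = 2d$. The auto-equivalences used (shifts, line bundle twists, and the spherical twist around $\cO_X$) preserve $\Stab^\dagger(X)$ by Corollary \ref{preserve}, and Proposition \ref{prop:equivalences} ensures that both the geometric conclusion and the Mukai-homomorphism statement transport along the induced isomorphism of moduli spaces. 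Using Lemma \ref{first_deformation} together with Theorem \ref{families}, I will then deform the triple $(X, \vv, \sigma)$ to $(X', \vv, \sigma')$ where $X'$ is an elliptic K3 surface with $\Pic(X') = \Z s \oplus \Z f$; Remark \ref{rem:defMukai} ensures that the Mukai homomorphism is preserved along the deformation.

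On $X'$, the next step is to pick $\ww = (0, \alpha f, \beta)$ with $\alpha > 0$, $\gcd(\alpha, \beta) = 1$, and $(\vv, \ww) = -1$. Such a $\ww$ exists by a B\'ezout-type argument since $\vv$ is primitive and $f$ pairs non-trivially with $\Delta$. Example \ref{ex:Fm_partner} yields that $M := M_H(\ww)$ is a smooth projective K3 surface; the condition $(\vv, \ww) = -1$ forces fineness, so Proposition \ref{prop:FMequivalence} provides an equivalence $\Phi_\cE : D^b(M) \xrightarrow{\sim} D^b(X')$ which preserves the distinguished component by Corollary \ref{preserve}. Composing $\Phi_\cE^{-1}$ with appropriate tensor products by line bundles on $M$ and shifts, I aim to land at the vector $\vv'' = (1, 0, 1-n)$, where $n = 0$ if $\vv^2 = -2$ and $n = 1$ if $\vv^2 = 0$. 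The result is a $\vv''$-generic stability condition $\sigma'' \in \Stab^\dagger(M)$ and an isomorphism $M_{X', \sigma'}(\vv) \cong M_{M, \sigma''}(\vv'')$ compatible with the Mukai homomorphism, again via Proposition \ref{prop:equivalences}.

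To conclude, I will connect $\sigma''$ to a stability condition in the Gieseker chamber of $D^b(M)$ via a continuous path in $\Stab^\dagger(M)$, which by local finiteness of walls (Proposition \ref{walls}) can be perturbed so that it crosses walls one at a time, and at generic points. Corollary \ref{yoshiokaez} then propagates the conclusion across each wall-crossing, reducing the problem to the Gieseker moduli space of $\vv'' = (1, 0, 1-n)$ on $M$. For $n = 0$, this is the single reduced point $\{\cO_M\}$, giving the spherical case. For $n = 1$, the Gieseker moduli space is $\Hilb^1(M) = M$, a projective K3 surface, for which the Mukai homomorphism is the standard Hodge isometry recorded in Example \ref{ex:hilbert_scheme}. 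The main obstacle I anticipate is the bookkeeping in the middle step: explicitly choosing $\ww$ and the sequence of line bundle twists so that the normalized transformed vector really is $(1, 0, 1-n)$, and simultaneously verifying that $\sigma''$ remains $\vv''$-generic (so that Theorem \ref{thm:symp} and Corollary \ref{yoshiokaez} apply at each step). This is essentially a linear algebra computation in $\Halg$, but it relies crucially on the primitivity of $\vv$ together with the normalizing equation $(\vv, \ww) = -1$.
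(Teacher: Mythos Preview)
Your outline follows the paper's strategy closely, but there is a genuine gap at the B\'ezout step. You write that a vector $\ww=(0,\alpha f,\beta)$ with $(\vv,\ww)=-1$ exists ``since $\vv$ is primitive and $f$ pairs non-trivially with $\Delta$.'' This is not correct. On the elliptic surface $X'$ the class $\Delta$ is a multiple $mH'$ of the polarization $H'=s+(d+1)f$, so $\Delta.f=m$, and
\[
(\vv,\ww)=\alpha(\Delta.f)-r\beta=\alpha m-r\beta.
\]
The image of this expression in $\Z$ is $\gcd(m,r)\cdot\Z$, so $(\vv,\ww)=-1$ is solvable if and only if $\gcd(r,m)=1$. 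Primitivity of $\vv=(r,mH',s)$ only gives $\gcd(r,m,s)=1$, which is strictly weaker. For a concrete failure with $\vv^2=0$: on an elliptic K3 with $H'^2=4$ (so $d=2$), the vector $\vv=(8,2H',1)$ is primitive with $\vv^2=0$, yet $\gcd(r,\Delta.f)=\gcd(8,2)=2$ and $(\vv,\ww)$ is always even.

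This is precisely why the paper inserts Lemma~\ref{coprime} (a twist by a suitable multiple of $\cO(f)$) together with the shifted spherical twist $\ST_{\cO_X}[1]$ \emph{before} looking for $\ww$: these autoequivalences rearrange the entries of $\vv$ so that the rank becomes coprime to the pairing with $f$, after which the B\'ezout argument you want actually goes through. Once you add this step, the rest of your outline---the Fourier--Mukai to $M$, the normalization to $(1,0,1-n)$ via line-bundle twists, and the wall-crossing path to the Gieseker chamber using Corollary~\ref{yoshiokaez}---matches the paper's proof. A minor additional point: you should also ensure $\beta\neq 0$ (needed in Example~\ref{ex:Fm_partner}), but once $\gcd(r,\Delta.f)=1$ this is easily arranged by translating any particular solution $(\alpha,\beta)$ by an integer multiple of $(r,\Delta.f)$.
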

\begin{proof}
As a preliminary remark, notice that if $\vv^2=0$, then $M_{\sigma}(\vv)$ is a two-dimensional smooth and proper algebraic space, hence projective, and moreover it is symplectic. So, to prove the Theorem it is enough to show that $M_{\sigma}(\vv)$ is deformation equivalent to a point if $\vv^2=-2$ or to a K3 surface if $\vv^2=0$.

From Lemmas \ref{change_vector} and \ref{first_deformation} we can assume that $X$ is an elliptic K3 surface, with $\Pic(X)=\Z s \oplus \Z f$, where $f$ is the class of a fiber and $s$ is the class of a section. Moreover, from Lemma \ref{coprime} and an application of the shifted spherical twist $\ST_{\cO_X}[1]$ we can assume that the Mukai vector $\vv=(r,\Delta,s)$ has rank positive rank $r>0$ and coprime with $\Delta.f$. 

Consider a vector $\ww=(0,\alpha f,\beta)$; we have $(\ww,\vv)=\alpha \Delta.f - \beta r$. Since $r$ and $\Delta.f$ are coprime, we can find $\alpha$ and $\beta$ such that $(\vv,\ww)=-1$. Since $r>0$ we can assume also that $\alpha >0$ and $\beta \neq 0$. Let $H$ be a polarization such that $M:=M_{H}(w)$ is non-empty and parameterizes stable sheaves, as in Example \ref{ex:Fm_partner}. The moduli space $M$ is fine because $(-\vv,\ww)=1$, see \cite[Remark $4.6.8$]{huybrechts_lehn_2010}. 

Consider the Fourier-Mukai transform 
\[\Phi_{\cE}:D^{b}(M) \xrightarrow{\sim} D^{b}(X)\] 
given by the universal family; it is an equivalence by Proposition \ref{prop:FMequivalence}. Since $M$ is a projective symplectic surface, derived equivalent to a K3 surface, it is a K3 surface itself. At the level of cohomology $\Phi_{\cE}^{H}$ is an isometry, and $(\Phi^H_{\cE})^{-1}(\ww)=(0,0,1)$. Define $\sigma':=\Phi_{\cE,*}^{-1}(\sigma)$ and $\vv':=(\Phi^H_{\cE})^{-1}(\vv)$. Then \[r(\vv')=-(\vv',(0,0,1))=-(\vv,\ww)=1\]
Up to twisting for a line bundle we can assume $\vv'=(1,0,1-n)$, with $n=\frac{\vv^{2}+2}{2} \geq 0$. By Proposition \ref{prop:equivalences} the moduli space $M_{X,\sigma}(\vv)$ is isomorphic to the moduli space $M_{M,\sigma'}((1,0,1-n))$, and the morphism $\theta_{\vv}$ is compatible with this isomorphism.

Consider the wall and chamber decomposition for the vector $\vv'$ on $\Stab^{\dagger}(M)$. Let $H'$ be a polarization on $M$. From Theorem \ref{large_volume} there is a chamber where Bridgeland stability is the same as Gieseker $H'$-stability. If $\vv^{2}=-2$, then $\vv'=(1,0,1)$. The moduli space for this vector in the Gieseker chamber is a reduced point corresponding to $\cO_{X}$. If $\vv^{2}=0$, the new vector is $(1,0,0)$ and the moduli space in the Gieseker chamber parameterizes ideal sheaves of points, so it is isomorphic to the underlying K3 surface. Moreover, the Mukai homomorphism is just the identity on $H^2(X,\Z)$.

Since $\Stab^{\dagger}(M)$ is connected by definition, we can find a path that connects the Gieseker chamber with the stability condition $\sigma'$. This will intersect finitely many walls, because they are locally finite. From Corollary \ref{yoshiokaez} and Remark \ref{rem:defMukai} we get the thesis. 
\end{proof}

\begin{Lem}\label{coprime}
Let $X$ be an elliptic K3 surface with $\Pic(S)=\Z s \oplus \Z f$ where $f$ is the class of an elliptic fiber, and $s$ is the class of a section. Let $\vv=(r,m(s+df),s)$ primitive, with $d > 0$ and $r>0$. Then, there exists $k$ such that $\vv.\ch(kf)=(r,\Delta_{k},s_{k})$ has $\gcd(\Delta_{k},s_{k})=1$.
\end{Lem}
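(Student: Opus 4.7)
The plan is to perform a direct Mukai-vector calculation and then recognize the statement as essentially a reformulation of the primitivity of $\vv$.

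First I would compute $\vv \cdot \ch(kf)$ explicitly. Since $f^2 = 0$ and $s \cdot f = 1$ in $\NS(X)$, one has $\ch(\cO_X(kf)) = (1, kf, 0)\in\Halg$, and the cup product on $H^*(X,\Z)$ gives
\[
\vv \cdot \ch(kf) = \bigl(r,\; ms + (md + rk)f,\; s + mk\bigr),
\]
so $\Delta_k = ms + (md+rk)f$ and $s_k = s + mk$.

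Next I would unpack the gcd condition in the natural lattice-theoretic sense: $\gcd(\Delta_k, s_k)=1$ means that the pair $(\Delta_k, s_k) \in \NS(X) \oplus H^4(X,\Z)$ is primitive, i.e.\ there is no integer $d>1$ dividing $s_k$ with $\Delta_k/d \in \NS(X)$. In the basis $\{s,f,\mathrm{pt}\}$ this is the condition $\gcd(m,\,md+rk,\,s+mk)=1$, and reducing the last two entries modulo $m$ this simplifies to
\[
\gcd(m,\,rk,\,s) = 1.
\]

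The final step is to invoke the primitivity of $\vv$. Primitivity of $\vv = (r, ms + mdf, s)$ in $\Halg$ amounts to $\gcd(r, m, md, s) = \gcd(r, m, s) = 1$, since $md$ is automatically divisible by any common divisor of $r,m,s$. Taking $k=1$ gives $\gcd(m, r, s) = 1$ immediately, so $k=1$ already works; more flexibly, any $k$ coprime with $g:=\gcd(m,s)$ works, since if a prime $p$ divided $\gcd(m,rk,s)$ then $p\mid g$, hence $p\nmid r$ by primitivity, forcing $p\mid k$ and contradicting $\gcd(k,g)=1$.

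The only conceptual obstacle is pinning down the correct reading of $\gcd(\Delta_k,s_k)$ as lattice-primitivity of $(\Delta_k,s_k)$; once that is in place the lemma is a one-line consequence of the primitivity of $\vv$, and no subtle arithmetic is required.
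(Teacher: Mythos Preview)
Your proof is correct and more elementary than the paper's. Both you and the paper compute $\Delta_k=ms+(md+rk)f$, $s_k=s+mk$, and both reduce $\gcd(\Delta_k,s_k)$ to $\gcd(m,\,md+rk,\,s+mk)$. You then simplify this directly to $\gcd(m,rk,s)$ and observe that primitivity of $\vv$ gives $\gcd(r,m,s)=1$, so $k=1$ (or indeed any $k$ coprime to $\gcd(m,s)$) already works. The paper instead bounds $\gcd(s_k,\Delta_k)$ by $\gcd(\gcd(s,m),\,md+rk)$ and then invokes Dirichlet's theorem on arithmetic progressions to choose $k$ making $\frac{md}{c}+k\frac{r}{c}$ a prime larger than $\gcd(s,m)$, where $c=\gcd(md,r)$. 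Your reduction modulo $m$ bypasses this entirely: since any prime dividing $\gcd(s,m)$ already divides $md$ but cannot divide $r$ (by primitivity of $\vv$), it cannot divide $md+r$, so Dirichlet is unnecessary. Your approach is strictly simpler and proves slightly more, namely an explicit description of all $k$ that work.
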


\begin{proof}
We have 
\begin{align*}
\Delta_{k}&=ms + (md+kr)f;\\
s_{k}&=s+mk.
\end{align*}
In particular $\gcd(s_{k},m)=\gcd(s,m)$. Since $s$ and $f$ are primitive we get
\[  \gcd(s_{k},\Delta_{k})=\gcd(\gcd(s_{k},m),\gcd(s_{k},md+rk)) \mid \gcd(\gcd(s,m),md+rk).\]
Since $\vv$ is primitive, we have $\gcd(\gcd(s,m),\gcd(md,r))=1$, because no prime can divide $\gcd(s,m)$ and $r$. Call $c=\gcd(md,r)$, and write $md+kr=c(\frac{md}{c}+k\frac{r}{c})$. 

By the Dirichlet Theorem on arithmetic progressions, we can find $k$ such that $(\frac{md}{c}+k\frac{r}{c})$ is a prime bigger than $\gcd(s,m)$. This implies $md+kr$ is coprime with $\gcd(s,m)$ hence the thesis.
\end{proof}

The last step is to prove the Main Theorem for Mukai vector $\vv$ such that $\vv^2>0$. We first show that the Picard group of Fourier-Mukai partner $M$ is an hyperbolic plane, and then deform to a K3 surface of Picard rank one. 

\begin{Lem}\label{fm}
Let $X$ be an elliptic K3 surface with $\Pic(X)=\Z s \oplus \Z f$, let $\vv=(r,m(s+(d+1)f),s)$ primitive, with $r>0$ and $\sigma$ generic. There exists another elliptic K3 surface $M$ with $\Pic(M)=\Z s'\oplus \Z f'$, and an isomorphism $M_{X,\sigma}(\vv)\cong M_{M,\sigma'}((1,0,1-n))$ where $n=\frac{\vv^{2}+2}{2}$ and $\sigma' \in U(M)$ is generic for $(1,0,1-n)$.
\end{Lem}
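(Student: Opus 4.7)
The plan is to mimic the Fourier--Mukai construction used in Theorem~\ref{Yoshioka_semirigido}, but to take the partner $M$ to be a \emph{Bridgeland} moduli space $M_\sigma(\ww)$ rather than a Gieseker one, so that the transformed stability condition $\sigma'$ automatically lands in the geometric chamber of $M$.

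First I would use Lemma~\ref{coprime} together with autoequivalences of type (1)--(4) to reduce to $\gcd(r, m) = 1$, then choose integers $\alpha > 0$, $\beta \neq 0$ with $\alpha m - \beta r = -1$ and set $\ww := (0, \alpha f, \beta)$, so that $\ww^2 = 0$ and $(\vv, \ww) = -1$. By the already-established isotropic case of Theorem~\ref{Yoshioka_semirigido}, the space $M := M_{X,\sigma}(\ww)$ is a projective K3 surface; fineness follows from $(\vv, \ww) = -1$ as in \cite[Remark~4.6.8]{huybrechts_lehn_2010}, yielding a universal family $\cE$ and a Fourier--Mukai equivalence $\Phi_\cE : D^b(M) \xrightarrow{\sim} D^b(X)$ by Proposition~\ref{prop:FMequivalence}.

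Three properties then need to be checked. \emph{(a) Picard structure:} $\Phi_\cE^H$ is a Hodge isometry preserving the algebraic Mukai lattices, and the one on $X$ is $H^0(X,\Z) \oplus \Pic(X) \oplus H^4(X,\Z) \cong U^{\oplus 2}$ by hypothesis; since the analogous lattice on $M$ decomposes as $U \oplus \Pic(M)$, we conclude $\Pic(M) \cong U$, and any primitive isotropic class (which is automatically nef up to sign in $U$, as $U$ contains no $(-2)$-classes) produces an elliptic fibration on $M$ with a section, so $\Pic(M) = \Z s' \oplus \Z f'$ as claimed. \emph{(b) Transformed Mukai vector:} since $\Phi_\cE^H((0,0,1)) = v(\Phi_\cE(\cO_m)) = \ww$, the vector $\vv' := (\Phi_\cE^H)^{-1}(\vv)$ has rank $-(\vv, \ww) = 1$, and a line bundle twist on $M$ normalizes $\vv'$ to $(1, 0, 1-n)$ with $n = (\vv^2 + 2)/2$. \emph{(c) Geometric chamber:} for each $m \in M$, the object $\Phi_\cE(\cO_m)$ is $\sigma$-stable by construction, so every skyscraper $\cO_m$ is $\sigma'$-stable under $\sigma' := \Phi_{\cE, \ast}^{-1}(\sigma)$; combined with Proposition~\ref{prop:orientation}, which gives $\Omega_{Z'} \in \cP_0^+(M)$, Lemma~\ref{UandV} places $\sigma' \in U(M)$. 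Genericity of $\sigma'$ for $(1, 0, 1-n)$ follows from genericity of $\sigma$ for $\vv$ because equivalences preserve the wall-and-chamber decomposition, and Proposition~\ref{prop:equivalences} then yields the desired isomorphism $M_{X,\sigma}(\vv) \cong M_{M,\sigma'}((1, 0, 1-n))$.

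The main obstacle is (a): translating the lattice-theoretic identification $\Pic(M) \cong U$ into the geometric statement that $M$ actually carries an elliptic fibration with a section. The absence of $(-2)$-classes in $U$ is what makes this automatic, since otherwise one would have to reflect the chosen isotropic class into the nef cone before it could be realized as the class of an elliptic fiber.
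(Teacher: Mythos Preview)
Your overall strategy coincides with the paper's: take $M=M_\sigma(\ww)$ as a \emph{Bridgeland} moduli space so that $\Phi_\cE(\cO_m)$ is $\sigma$-stable by construction, forcing $\sigma'\in U(M)$ via Lemma~\ref{UandV} and Proposition~\ref{prop:orientation}; then normalise the transformed vector to $(1,0,1-n)$ by a line-bundle twist. The paper does exactly this, including the appeal to Theorem~\ref{Yoshioka_semirigido} for the isotropic vector $\ww$ (it also remarks that $\sigma$ may first need a small deformation to be $\ww$-generic, which you should add).

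Where you genuinely differ is in part~(a). You argue abstractly: $\Phi_\cE^H$ gives $U\oplus\Pic(M)\cong H^*_{\mathrm{alg}}(M,\Z)\cong H^*_{\mathrm{alg}}(X,\Z)\cong U^{\oplus 2}$, and since $U$ is unimodular and the even unimodular lattice of signature $(1,1)$ is unique, $\Pic(M)\cong U$. The paper instead writes down two explicit classes $\ww'=(\alpha,\beta s+(\alpha+\beta)f,\beta)$ and $\tt=(\alpha,\beta s+(\beta-r)f,-\Delta.f)$ in $\ww^\perp$, computes their pairings, and observes that their images under $(\Phi_\cE^H)^{-1}$ have divisor parts spanning a hyperbolic plane inside $\Pic(M)$; combined with Picard-rank invariance under derived equivalence this gives $\Pic(M)\cong U$. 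Your argument is shorter and coordinate-free; the paper's has the virtue of exhibiting a candidate fibre class and section class concretely.

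There is one factual error you should fix: the hyperbolic plane $U$ \emph{does} contain $(-2)$-classes, namely $\pm(e-f)$ in the standard isotropic basis, so your parenthetical ``as $U$ contains no $(-2)$-classes'' is false, and it is not true that \emph{any} primitive isotropic class is nef up to sign. Your own final sentence already indicates the remedy: one of $\pm(e-f)$ is the unique effective $(-2)$-class, and reflecting in it (if necessary) places one of $e,f$ on the boundary of the nef cone, yielding the elliptic fibration, with the effective $(-2)$-class furnishing the section. The paper, for its part, does not spell out this passage from $\Pic(M)\cong U$ to the elliptic structure either; its proof ends at ``the Picard group is an hyperbolic plane.''
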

\begin{proof}
We begin as in the proof of Theorem \ref{Yoshioka_semirigido}: we apply Lemma \ref{coprime} and a spherical twist to reduce to $r$ and $\Delta.f$ coprime, and we consider a vector $\ww=(0,\alpha f,\beta)$ such that $(\ww,\vv)=-1$. Deforming $\sigma$ if necessary, we can assume it to be $\ww$-generic too. Theorem \ref{Yoshioka_semirigido} applied to the moduli space $M:=M_{\sigma}(\ww)$, implies that it is non-empty and a K3-surface. It is fine because wall-crossing preserves the universal family, and the universal family induces a derived equivalence
\[\Phi_{\cE}: D^b(M) \xrightarrow{\sim} D^b(X). \]
As in the proof of Theorem \ref{Yoshioka_semirigido}, define 
\[\sigma':=\Phi_{\cE,*}^{-1}(\sigma)\  \mathrm{and} \  \vv':=(\Phi^H_{\cE})^{-1}(\vv).\]
Up to twisting with a line bundle on $M$ we can assume $\vv=(1,0,1-n)$. To conclude the proof, it remains to show that $\sigma'$ is in $U(M)$ and that $\Pic(M)=\Z s' \oplus \Z f'$.

From Lemma \ref{UandV} and Proposition \ref{prop:orientation}, we only have to show that the skyscraper sheaves $\{\cO_{m} \mid m\in M \}$ are $\sigma'$ stable. This is true because $\Phi_{\cE}(\cO_{m})$ are precisely the objects of the moduli space $M$, which by construction are $\sigma$-stable.

For the second statement, consider the two vectors $\ww':=(\alpha,\beta s + (\alpha +\beta)f,\beta)$ and $\tt:=(\alpha,\beta s + (\beta-r)f,-\Delta.f)$ on $X$, where $\beta r - \alpha \Delta.f=1$. It is a computation to check that they satisfy the following relations.
\[
\begin{cases}
(\ww',\ww')=0,\\
(\ww',\ww)=0.
\end{cases}
\qquad
\begin{cases}
(\tt,\tt)=-2,\\
(\tt,\ww)=0,\\
(\tt,\ww')=-1.
\end{cases}
\]
This implies that $(\Phi^{H}_{\cE})^{-1}(\ww')=(0,l,a)$ and $(\Phi^{H}_{\cE})^{-1}(\tt)=(0,t,b)$, with 
\[
\begin{cases}
(l,l)=0,\\
(t,t)=-2,\\
(l,t)=-1.
\end{cases}
\]
which means that $\Pic(M)$ contains an hyperbolic plane. Since the Picard rank is a derived invariant for K3 surfaces, the Picard group is an hyperbolic plane.
\end{proof}

\begin{proof}[Proof of the Main Theorem]
Consider $X$ any K3 surface, $\vv=(r,\Delta,s)$ a primitive vector with $\vv^{2} \geq -2$, and $\sigma \in \Stab^{\dagger}(X)$ a $\vv$-generic stability condition. The cases $\vv^2=-2$ and $\vv^2=0$ were proved in Theorem \ref{Yoshioka_semirigido}, so we assume $\vv^2 >0$. By Corollary \ref{Cor:projectivity} we see that $M_{\sigma}(\vv)$ is smooth, symplectic and projective. Since the Hodge numbers are invariant under deformations of projective varieties it is enough to show that $M_{\sigma}(\vv)$ is deformation equivalent (via a relative moduli space) to the Hilbert scheme of points on a K3 surface.

Applying Lemma \ref{change_vector} we can assume $r>0$ and $\Delta$ ample. Under this assumptions, using Lemma \ref{first_deformation} we deform to an elliptic K3 with a section and using Lemma \ref{coprime} we can assume $r$ and $\Delta$ to be coprime. Finally with Lemma \ref{fm} we reduce to $\vv=(1,0,1-n)$ with $n>1$, a generic $\sigma \in U(X)$, and $\Pic(X)=\Z \ee \oplus \Z f$.

Let $d=k^2(n-1)$, the class $s+(d+1)f$ is ample on $X$ of degree $2d$. With a small deformation we reduce to the case of a K3 surface $X'$ of Picard rank one, degree $2d$ and Mukai vector $\vv=(1,0,1-n)$. Since stability is an open property for families of objects, every skyscraper sheaf is still stable with respect to the deformed stability condition. So the deformed stability condition lies in the geometric chamber $U(X')$ by Lemma \ref{UandV}. By definition of $U(X')$ we can act by the group $\widetilde{GL_2(\R)}$ and get a stability condition $\sigma_{\alpha,\beta} \in V(X')$. This brings us in the setting of Corollary \ref{Cor:unigonal}, and we conclude the proof. Indeed, at every step of the reduction we get either isomorphisms that preserve the Mukai homomorphism by Proposition \ref{prop:equivalences}, or deformations that also preserve the Mukai homomorphism by Remark \ref{rem:defMukai}.
\end{proof}


\begin{thebibliography}{C-MTB11}

\bibitem[AP06]{AP:tstructures} Abramovich, D., Polishchuk, A., Sheaves of {$t$}-structures and valuative criteria for stable complexes, {\it J.~Reine Angew.~Math.} {\bf 590} (2006), 89--130.

\bibitem[BL+19]{bayer2019stability} Bayer, A., Lahoz, M., Macr\`i, E., Nuer, H., Perry, A., Stellari, P., Stability conditions in family, eprint  {\texttt {arXiv:1902.08184}}. 

\bibitem[BaMa11]{bayer_local} Bayer, A., Macr\`i, E., The space of stability conditions on the local projective plane, {\it Duke Math.\ J.}  {\bf 160} (2011), 263--322.

\bibitem[BaMa14a]{bayer_projectivity_2013} \bysame, Projectivity and birational geometry of Bridgeland moduli spaces, {\it J. Amer. Math. Soc.}  {\bf 27} (2014), 707--752.

\bibitem[BaMa14b]{mmp} \bysame, MMP for moduli of sheaves on K3s via wall-crossing: nef and movable cones, Lagrangian fibrations, {\it Invent.~Math.}  {\bf 198} (2014),  505--590

\bibitem[Bea83]{beauville_1983} Beauville, A., Vari\'et\'es K\"{a}hleriennes dont la premi\`ere classe de Chern est nulle, {\it J. Differential Geom.} {\bf 18} (1983), 755--782.

\bibitem[Bri07]{Bridgeland_triangulated} Bridgeland, T., Stability conditions on triangulated categories. {\it Ann. of Math. (2)} {\bf 166} (2007), 317--345.

\bibitem[Bri08]{bridgeland_stability_2006} \bysame, Stability conditions on K3 surfaces, {\it Duke Math.\ J.} {\bf 141} (2008), 241--291.

\bibitem[BBH]{bartocci_1997} Bartocci, C., Bruzzo, U. and Hern\'{a}ndez Ruip\'{e}rez, D., A Fourier-Mukai transform for stable bundles on K3 surfaces, {\it J. Reine Angew. Math.} {\bf 486} (1997), 1--16.

\bibitem[Har12]{hartmann_cusps_2012} Hartmann, H., Cusps of the K\"{a}hler moduli space and stability conditions on K3 surfaces, {\it Math. Ann.} {\bf 354} (2012), 1--42.

\bibitem[Huy97]{huybrechts_birational} Huybrechts, D., Birational symplectic manifolds and their deformations, {\it J. Diff. Geom.} {\bf 45} (1997), 488--513.

\bibitem[Huy06]{Huybrechts_FM} \bysame, Fourier-Mukai transforms in algebraic geometry, {\it Oxford Mathematical Monographs}, Oxford University Press, Oxford, 2006.

\bibitem[HL10]{huybrechts_lehn_2010} Huybrechts, D., Lehn, M., The Geometry of Moduli Spaces of Sheaves, {\it Cambridge Mathematical Library}, Cambridge University Press, Cambridge, 2010.

\bibitem[HS05]{Huybrechts_2005} Huybrechts, D., Stellari, P., Equivalences of twisted K3 surfaces, {\it Math. Ann.} {\bf 332} (2005), 901--936.

\bibitem[Ina02]{inaba_stable} Inaba, M., Toward a definition of moduli of complexes of coherent sheaves on a projective scheme, {\it J. Math. Kyoto Univ.} {\bf 42} (2002), 317--329.

\bibitem[Ina11]{inaba_smoothness_2010} \bysame, Smoothness of the moduli space of complexes of coherent sheaves on an abelian or a projective K3 surfaces, {\it Adv. Math.} {\bf 227} (2011), 1399--1412.

\bibitem[Kul90]{kuleshov90} Kuleshov, S., Stable bundles on a K3 surface, {\it Izv. Akad. Nauk SSSR Ser. Mat.} {\bf 54} (1990), 213--220

\bibitem[Lie06]{Lieblich_complexes} Lieblich, M., Moduli of complexes on a proper morphism, {\it J.\ Algebraic Geom.} {\bf 15} (2006), 175--206.

\bibitem[MS20]{macri2020stability} Macr\`i, E., Schmidt, B., Stability and applications, eprint {\texttt {arXiv:2002.01242}}.

\bibitem[Muk84]{mukai_symplectic_1984} Mukai, S., Symplectic structure of the moduli space of sheaves on an abelian or K3 surface, {\it Invent. Math.} {\bf 77} (1984), 101--116.

\bibitem[Muk87a]{Muk:K3} \bysame, On the moduli space of bundles on K3 surfaces. I, {\it Vector bundles on algebraic varieties} (Bombay, 1984), 341--413, Tata Inst. Fund. Res. Stud. Math. {\bf 11}, Tata Inst. Fund. Res., Bombay, 1987.

\bibitem[Muk87b]{Muk:FM} \bysame, Fourier functor and its application to the moduli of bundles on an Abelian variety, {\it Adv. Studies Pure Math.} {\bf 10} (1987), 515--550

\bibitem[O'G97]{ogrady_weight-two_1995} O'Grady, K., The weight-two Hodge structure of moduli spaces of sheaves on a K3 surface, {\it J. Algebraic Geom.} {\bf 6} (1997), 599--644.

\bibitem[PR18]{perego_moduli_2018} Perego, A., Rapagnetta, A., The moduli spaces of sheaves on K3 surfaces are irreducible symplectic varieties, eprint  {\texttt {arXiv:1802.01182}}. 

\bibitem[ST01]{seidel_2001} Seidel, P., Thomas, R., Braid group actions on derived categories of coherent sheaves, {\it Duke Math. J.} {\bf 108} (2001), 37--108.

\bibitem[Tod08]{Toda_Moduli} Toda, Y., Moduli stacks and invariants of semistable objects on K3 surfaces, {\it Adv. Math.} {\bf 217} (2008), 2736--2781.

\bibitem[Yos01]{Yoshioka_main} Yoshioka, K., Moduli spaces of stable sheaves on abelian surfaces, {\it Math. Ann.} {\bf 321} (2001), 817--884.

\bibitem[Yos03]{Yoshioka_FM1} \bysame, Stability and the Fourier-Mukai transform I, {\it Math. Z.} {\bf 245} (2003), 657--665.

\bibitem[Yos09]{Yoshioka_FM2} \bysame, Stability and the Fourier-Mukai transform II, {\it Comp. Math.} {\bf 145} (2009), 112--142.

\end{thebibliography}
\end{document}